\documentclass[a4paper,reqno,12pt]{article}

\usepackage{amsmath,amssymb,amsthm,mathtools,bm}
\usepackage{geometry}
\usepackage{microtype}
\usepackage{enumitem}
\usepackage{booktabs}
\usepackage{array}
\usepackage{aliascnt}
\usepackage{fancyhdr}
\usepackage[numbers,sort&compress]{natbib}
\usepackage{indentfirst}
\usepackage{hyperref}
\usepackage[nameinlink,noabbrev]{cleveref}
\allowdisplaybreaks[3]
\numberwithin{equation}{section}
\setlist{itemsep=0.2em,topsep=0.35em}

\makeatletter
\renewcommand\section{\@startsection{section}{1}{\z@}%
  {-2.2ex \@plus -0.5ex \@minus -0.2ex}%
  {1.1ex \@plus 0.2ex}%
  {\normalfont\large\bfseries\raggedright}}
\makeatother

\newtheorem{theorem}{Theorem}[section]
\newaliascnt{lemma}{theorem}
\newtheorem{lemma}[lemma]{Lemma}
\aliascntresetthe{lemma}
\newaliascnt{proposition}{theorem}
\newtheorem{proposition}[proposition]{Proposition}
\aliascntresetthe{proposition}
\newaliascnt{corollary}{theorem}

\aliascntresetthe{corollary}
\theoremstyle{remark}
\newaliascnt{remark}{theorem}
\newtheorem{remark}[remark]{Remark}
\aliascntresetthe{remark}
\crefname{theorem}{theorem}{theorems}
\Crefname{theorem}{Theorem}{Theorems}
\crefname{lemma}{lemma}{lemmas}
\Crefname{lemma}{Lemma}{Lemmas}
\crefname{proposition}{proposition}{propositions}
\Crefname{proposition}{Proposition}{Propositions}
\crefname{corollary}{corollary}{corollaries}
\Crefname{corollary}{Corollary}{Corollaries}
\crefname{remark}{remark}{remarks}
\Crefname{remark}{Remark}{Remarks}
\crefname{equation}{equation}{equations}
\Crefname{equation}{Equation}{Equations}
\crefname{section}{section}{sections}
\Crefname{section}{Section}{Sections}

\newcommand{\dd}{\,\mathrm d}
\newcommand{\norm}[1]{\left\lVert #1\right\rVert}

\newcommand{\doi}[1]{\href{https://doi.org/#1}{doi: #1}}

\title{\bfseries Global Well-Posedness for Planar Compressible Magnetohydrodynamics with Degenerate Viscosity on Unbounded Domains}
\author{Shixuan Li$^a$, Hongyu Wang$^b$ \thanks{
		Email addresses: {lsx981017@163.com} (S. X. Li), 
        {hyuwang\_a@163.com} (H. Y. Wang).} \\[3mm]  a. School of Mathematics and Computer Science,\\ Nanchang University,  Nanchang 330031, P. R. China; \\
	b. School of Mathematics and Computer Science, \\Nanchang University,  Nanchang 330031, P. R. China.}
\date{}
\begin{document}

\maketitle

%We study the global existence and large-time behavior of strong
%solutions to the one-dimensional planar compressible
%magnetohydrodynamic system in unbounded domains. %provided that the longitudinal viscosity and heat conductivity are given by $\mu(v)=v^{-\alpha}$ and $\kappa(\theta)=\theta^\beta$.

%We consider the Cauchy problem and thermally insulated half-line problems with either homogeneous Dirichlet or Neumann conditions on the magnetic field.

%For every fixed $\beta\geq0$ and arbitrarily large $H^1$ initial
%perturbations of a constant equilibrium with the initial specific
%volume and temperature bounded away from zero, we establish
%the existence of a unique global strong solution, provided that
%the nonnegative viscosity exponent $\alpha$ is sufficiently small in terms of the initial data.
%Both the specific volume and the temperature remain bounded above
%and away from zero uniformly in space and time. Moreover, for each fixed admissible $\alpha$, the solution converges to equilibrium uniformly in space as time tends to infinity. The result allows the longitudinal viscosity to degenerate as the density approaches vacuum, without requiring a positive constant part or any smallness condition on the initial perturbation.

\begin{abstract}The one-dimensional planar compressible magnetohydrodynamic system is studied on unbounded domains. The longitudinal viscosity obeys a power law of the specific volume, which permits it to degenerate as the density approaches vacuum, and the heat conductivity follows a power law of the temperature. For any fixed non-negative exponent in the heat conductivity term, the existence and uniqueness of global strong solutions are established for arbitrarily large initial data, provided that the positive exponent for viscosity is sufficiently small. The specific volume and temperature possess time-independent upper and lower bounds. Moreover, for each admissible viscosity exponent, the solution converges uniformly in space to the equilibrium state as time tends to infinity.
\end{abstract}

\noindent\textbf{Keywords:} planar compressible magnetohydrodynamics;
degenerate viscosity; large initial data; uniform-in-time estimates;
large-time behavior.

\section{Introduction}\label{sec:intro}
\indent We consider the one-dimensional planar compressible  magnetohydrodynamic
(MHD) system with density-dependent longitudinal viscosity and
temperature-dependent heat conductivity which read as follows \cite{Cabannes1970, JeffreyTaniuti1964, LandauLifshitz1984}:
\begin{align}
	v_t & =u_x  \label{eq:mass}\\[3mm]
	u_t+\left(P+\frac{1}{2}|\mathbf{b}|^2\right)_x & =\left(\frac{\mu u_x}{v}\right)_x,  \label{eq:mom}\\[3mm]
	\mathbf{w}_t-\mathbf{b}_x & =\left(\frac{\lambda \mathbf{w}_x}{v}\right)_x,  \label{eq:trans}\\[3mm]
	(v \mathbf{b})_t-\mathbf{w}_x & =\left(\frac{\nu \mathbf{b}_x}{v}\right)_x, \label{eq:ind}
\end{align}
 \begin{align}
	 \biggl(e&+\frac{u^2+|\mathbf{w}|^2+v|\mathbf{b}|^2}{2}\biggr)_t+\left\{u\left(P+\frac{1}{2}|\mathbf{b}|^2\right)-\mathbf{w} \cdot \mathbf{b}\right\}_x \nonumber\\
	& =\left\{\frac{\mu u u_x}{v}+\frac{\lambda \mathbf{w} \cdot \mathbf{w}_x}{v}+\frac{\nu \mathbf{b} \cdot \mathbf{b}_x}{v}+\frac{\kappa \theta_x}{v}\right\}_x, \label{eq:temp}
\end{align}where  $t>0$ is time, $x\in\Omega\subset\mathbb{R}=(-\infty,+\infty)$ denotes the Lagrange mass coordinate.  The
unknown functions 
 $v>0$, $u$, $\mathbf{w}\in\mathbb{R}^2$,  $\mathbf{b}\in\mathbb{R}^2$,  $e>0$,  $\theta>0$,   and $P$ are,  respectively, 
the specific volume of  the gas,  longitudinal velocity, transverse velocity, transverse magnetic field,
 internal energy,  absolute temperature, and pressure.  We will concentrate on the ideal, polytropic gas: 
\begin{equation}
	P=\frac{R \theta}{v}, \quad e=c_v \theta+\text { const.},
\end{equation}
where $R>0$ is the gas constant and $c_v > 0$ is the specific heat at constant volume. The transverse
shear viscosity coefficient $\lambda> 0$ and the magnetic diffusivity $\nu>0$ are constants. The
longitudinal viscosity  coefficient $\mu$  and heat conductivity  coefficient $\kappa$ satisfy
\begin{equation}\label{eq:viscosity}
	\mu=\mu(v)= \widetilde{\mu} v^{-\alpha},\qquad \kappa=\kappa(\theta)=\widetilde\kappa\theta^\beta,
\end{equation}
with constants $ \widetilde\mu \geq0,   \widetilde\kappa>0,$ and $\alpha\geq0, \beta\geq0. $  
Since the density is $1/v$,  for $\alpha>0$, the viscosity   tends to
zero as the density approaches vacuum.  For simplicity, let $R=c_v=\lambda=\nu=\widetilde\mu_2=\widetilde\kappa=1$.
  
The system  \eqref{eq:mass}-\eqref{eq:viscosity} is imposed with the initial conditions: 
\begin{equation}\label{eq:data}
 (v,u,\theta,\mathbf{w},\mathbf{b})(x,0)=(v_0,u_0,\theta_0,\mathbf{w}_0,\mathbf{b}_0)(x),
 \qquad x\in\Omega,
\end{equation}
and three types of far-ﬁeld and boundary conditions:
\begin{enumerate}
	\item[(1)]  Cauchy problem
	\begin{equation}
		\Omega=\mathbb{R},\quad\lim _{|x| \rightarrow\infty}(v, u, \theta, \mathbf{w}, \mathbf{b})(x,t)=(1,0,1,0,0), \quad t>0;\label{eq:bc-C}
	\end{equation}
	\item[(2)]The half-line problem with a homogeneous Neumann condition on the
	magnetic field
\begin{equation}
	\begin{gathered}
	\Omega=(0, \infty),\quad	u(0, t)=\mathbf{w}(0,t)=0, \quad \theta_x(0, t)=\mathbf{b}_x(0, t)=0,\\ \lim _{x \rightarrow \infty}(v, u, \theta, \mathbf{w},\mathbf{b})(x, t)=(1,0,1,0,0), \quad t>0;\label{eq:bc-N}
	\end{gathered}
\end{equation}	
	\item[(3)]The half-line problem with a homogeneous Dirichlet condition on the
	magnetic field
	\begin{equation}\label{eq:bc-D}
	\begin{gathered}
			\Omega=(0, \infty),\quad	u(0, t)=\mathbf{w}(0, t)=\mathbf{b}(0, t)=0, \quad\theta_x(0, t)=0, \\\lim _{x \rightarrow \infty}(v, u, \theta, \mathbf{w},\mathbf{b})=(1,0,1,0,0), \quad t>0.
	\end{gathered}\end{equation}
\end{enumerate}

The large-data theory for one-dimensional planar MHD on bounded domains goes back to
Kazhikhov~\cite{Kazhikhov1987}, who obtained global a priori estimates for constant
transport coefficients. Chen and Wang~\cite{ChenWang2002} studied a free-boundary
problem with general constitutive laws, while Wang~\cite{Wang2003} considered the
initial-boundary value problem on a fixed bounded interval with variable transport
coefficients. For constant viscosities and magnetic diffusivity and $\kappa=\widetilde{\kappa}\theta^\beta, \beta>0,$ Hu and Ju~\cite{HuJu2015} proved the existence and uniqueness of global strong
solutions for large initial data. Huang, Shi, and Sun~\cite{HuangShiSun2019}
further treated $\mu(v)=\widetilde{\mu}_1+\widetilde{\mu}_2v^{-\alpha}$, where $\widetilde{\mu}_1>0,
\widetilde{\mu}_2\geq0, \alpha\geq0,$
and established unique global strong solutions for large $H^1$ data for every fixed
$\alpha\geq0$. For constant viscosities and heat conductivity, Li and
Shang~\cite{LiShang2019} proved unique global strong solutions for a class of
magnetic diffusivities depending on the specific volume. For constant viscosities
and magnetic diffusivity, Huang, Shi, and Sun~\cite{HuangShiSun2021} established
uniform-in-time positive lower and upper bounds for $v$ and $\theta$, as well as
nonlinear exponential stability, for every $\beta\geq0$. More recently, Song and
Zhao~\cite{SongZhao2025} considered
\[
\mu(\theta),\lambda(\theta),\nu(\theta)\sim\theta^\alpha,
\qquad
\kappa(\theta)\sim\theta^\beta,
\qquad \beta>0,
\]
and proved unique global strong solutions and exponential stability for large $H^2$
data when $\alpha\geq0$ is sufficiently small.

On unbounded domains, global solvability and uniform-in-time estimates are distinct
issues. For constant viscosities and heat conductivity, L\"u, Shi, and
Xiong~\cite{LuShiXiong2021} proved the global existence of strong solutions for
large initial data. Cao, Peng, and Sun~\cite{CaoPengSun2021} subsequently considered
\[
\mu(v)=\widetilde{\mu}_1+\widetilde{\mu}_2v^{-\alpha},
\quad \mu_1>0,\quad \mu_2 \geq 0, \quad
\alpha\geq0,\qquad
\kappa(\theta)=\widetilde{\kappa}\theta^\beta,
\quad \beta>0,
\]
and established the existence and uniqueness of global strong solutions to the
Cauchy problem and the half-line problem with homogeneous Dirichlet conditions on
the magnetic field. For constant longitudinal viscosity and
$\kappa(\theta)=\widetilde{\kappa}\theta^\beta$, $\beta\geq0$, Tong, Wang, and
Zhang~\cite{TongWangZhang2026} proved the existence and uniqueness of global strong
solutions to the half-line problem with homogeneous Neumann conditions on the
magnetic field. For any fixed $T>0$, these results provide positive lower and upper
bounds for the specific volume and temperature on $[0,T]$, with constants that may
depend on $T$.

The large-time theory on unbounded domains was first developed for the
one-dimensional compressible Navier--Stokes equations.
Jiang~\cite{Jiang1999,Jiang2002} firstly obtained uniform-in-time positive lower and upper
bounds for the specific volume. Later on, Li and Liang~\cite{LiLiang2016}   established uniform-in-time estimates
and convergence to equilibrium for large initial data with constant viscosity and
heat conductivity, while Li and Xu~\cite{LiXu2026} extended these results to the
case of constant viscosity and temperature-dependent heat conductivity with
$\beta>0$.

For the case $\mu(\theta)=\theta^\alpha, 
\kappa(\theta)=\theta^\beta$,
Dong and Guo~\cite{DongGuo2025} established global existence and
asymptotic stability for the Cauchy problem with large
$H^2\times H^1\times H^1$ initial data when $\beta\geq0$ is fixed
and $\alpha\geq0$ is sufficiently small. Very recently, Wang and
Zhang~\cite{WangZhang2026} lowered the regularity requirement to
$H^1\times H^1\times H^1$  for the Cauchy problem on $\mathbb R$ and two initial-boundary
value problems on $\mathbb R_+$.

For planar MHD, the large-time behavior of strong solutions on unbounded domains
has also been studied under various assumptions on the transport coefficients.
Shang~\cite{Shang2022} considered viscosities and heat conductivity with a common
factor $h(v)\theta^a$, subject to suitable growth conditions on $h$, and established global classical solutions and asymptotic stability for general large initial data when $|a|$ is
sufficiently small and the magnetic diffusivity is sufficiently large.

More recently, Li, Liao, and Wan~\cite{LiLiaoWan2026} studied the MHD Cauchy
problem with constant longitudinal viscosity and
\[
\kappa(v,\theta)=\kappa_1+\kappa_2v\theta^b,
\qquad
\kappa_1,\kappa_2>0,\quad b>\frac32,
\]
and established uniform-in-time positive lower and upper bounds for the specific
volume and temperature, together with convergence to equilibrium. Their result
also applies to the half-line problem with homogeneous Dirichlet conditions on
the magnetic field.

These works established large-data asymptotic stability for planar MHD under
several classes of transport coefficients. A recent breakthrough is due to  Li and Liu~\cite{LiLiu2026}, who established
global strong solutions, uniform-in-time bounds, and convergence to equilibrium
for arbitrarily large $H^1$ data on the whole and half lines. In particular, they considered the
two viscosity laws
\[
\mu=\mu(v)=\mu_1+\mu_2v^{-\alpha},
\qquad \alpha\geq0,
\]
and
\[
\mu=\mu(\theta)=\mu_0\theta^\gamma,
\qquad \gamma\geq0.
\]  However, their methods rely crucially on the uniform positive lower bound of the viscosity coefficient \(\mu\), which fails to apply to the degenerate case \eqref{eq:viscosity}. In the present paper, we consider the genuinely degenerate viscosity law \eqref{eq:viscosity}, where the longitudinal viscosity degenerates to zero as the specific volume \(v\to\infty\). Our aim is to
establish the corresponding large-data global existence, uniform-in-time estimates,
and large-time stability for sufficiently small $\alpha>0$.

We first state the local existence   result for the viscosity
coefficient in \eqref{eq:viscosity} whose proof can be found in \cite{CaoPengSun2021,LuShiXiong2021,TongWangZhang2026, LiLiu2026}.
\begin{lemma}[\cite{CaoPengSun2021,LuShiXiong2021,TongWangZhang2026, LiLiu2026}]\label{lem:01a}
	Suppose that \begin{equation}\label{}
	0\leq\alpha\leq1,\quad	\beta\geq0,
	\end{equation}
	and that the initial data $(v_0, u_0, \theta_0, \mathbf{w}_0, \mathbf{b}_0)$ satisfies
	\begin{equation}\label{eq:init}
		(v_0-1,u_0,\theta_0-1,\mathbf{w}_0,\mathbf{b}_0)\in H^1(\Omega),
		\qquad \inf\limits_{\Omega} v_0>0, \quad\inf\limits_{\Omega} \theta_0>0,
	\end{equation} and are compatible with  \eqref{eq:bc-N}, \eqref{eq:bc-D}.  Then there exists some $T_0>0$  such that   the initial-boundary-value problem \eqref{eq:mass}-\eqref{eq:bc-C},
or \eqref{eq:mass}-\eqref{eq:data}, \eqref{eq:bc-N}, or  \eqref{eq:mass}-\eqref{eq:data}, \eqref{eq:bc-D}
has a unique global strong solution $(v,u,\theta,\mathbf{w},\mathbf{b})$ satisfying 
\begin{equation}\label{eq:finite}
 \begin{gathered}
 v-1,u,\theta-1,\mathbf{w},\mathbf{b}\in C([0,T_0];H^1(\Omega)),\\[2mm]
 v_t\in C([0,T_0];L^2(\Omega))\cap L^2(0,T_0;H^1(\Omega)),\\[2mm]
 u_t,\mathbf{w}_t,\mathbf{b}_t,\theta_t,u_{xx},\mathbf{w}_{xx},\mathbf{b}_{xx},\theta_{xx}
 \in L^2(\Omega\times(0,T_0)).
 \end{gathered}
\end{equation}
The specific volume and temperature are bounded above and away from zero on
this interval. 
For a fixed initial $H^1$ norm and fixed positive lower bounds for $v_0$ and
$\theta_0$, $T_0$ can be chosen independently of $\alpha\in[0,1]$.
\end{lemma}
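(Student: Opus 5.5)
The plan is the standard local theory: a contraction mapping (or linearization plus fixed point) argument in the class \eqref{eq:finite}, written in Lagrangian coordinates. The statement says ``global'' but means a solution on $[0,T_0]$. Set $V_0=\inf v_0$, $\Theta_0=\inf\theta_0$ and $M_0=\|(v_0-1,u_0,\theta_0-1,\mathbf w_0,\mathbf b_0)\|_{H^1}$.

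First I fix $T>0$ and a closed set $X_T$ of tuples $(\bar v,\bar u,\bar\theta,\bar{\mathbf w},\bar{\mathbf b})$ with the regularity \eqref{eq:finite}, with the same initial data, and satisfying
\[
\tfrac12V_0\le \bar v\le 2\sup v_0,\qquad \tfrac12\Theta_0\le\bar\theta,
\]
together with $H^1$ norms bounded by $2CM_0$. For such data I recover $v$ from $v=v_0+\int_0^t\bar u_x\,ds$. I then freeze the coefficients $\mu(\bar v)/\bar v=\bar v^{-1-\alpha}$, $\kappa(\bar\theta)/\bar v$, $1/\bar v$ and the lower-order nonlinear terms. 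This leaves decoupled linear uniformly parabolic equations for $u,\mathbf w,\mathbf b,\theta$. The $\mathbf b$ equation comes from \eqref{eq:ind} written as $\bar v\mathbf b_t+\bar u_x\mathbf b-\mathbf w_x=(\mathbf b_x/\bar v)_x$, and the $\theta$ equation is the internal energy form of \eqref{eq:temp}. Each is posed with the boundary conditions \eqref{eq:bc-C}, \eqref{eq:bc-N} or \eqref{eq:bc-D}.

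Standard $L^2$ and $H^1$ energy estimates apply: multiply by the unknown and by its second derivative, then use $\|f\|_{L^\infty}\le C\|f\|_{H^1}$ in one dimension. They give bounds for $u,\mathbf w,\mathbf b,\theta$ in $L^\infty(H^1)\cap L^2(H^2)$ and for their time derivatives in $L^2(L^2)$. The constants depend only on $M_0,V_0,\Theta_0$, and on $\beta$ and $\sup v_0$ through the bounds $\bar v^{-1-\alpha}\in[(2\sup v_0)^{-2}\wedge 1,\,(2/V_0)^{2}\vee1]$. These bounds are uniform for $\alpha\in[0,1]$, and this is precisely where the independence of $T_0$ from $\alpha$ comes from. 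The map $\Phi:(\bar\cdot)\mapsto(\cdot)$ sends $X_T$ into itself for $T$ small, for three reasons:
\begin{enumerate}
\item $\|v-v_0\|_{L^\infty}\le CT^{1/2}\|\bar u_x\|_{L^2(H^1)}$, which keeps $v$ in $[\tfrac12V_0,2\sup v_0]$;
\item $\|\theta-\theta_0\|_{L^\infty}\le CT^{1/2}\|\theta_t\|_{L^2(L^2)}^{1/2}\|\theta_t\|_{L^2(H^1)}^{1/2}$, or more simply an $L^\infty$ bound through $\|\theta-\theta_0\|_{L^2}^{1/2}\|(\theta-\theta_0)_x\|_{L^2}^{1/2}$ with a factor $T^{1/4}$, which keeps $\theta\ge \tfrac12\Theta_0$;
\item the lower-order terms gain a power of $T$.
\end{enumerate}

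Next I show that $\Phi$ is a contraction in the weaker norm $L^\infty(0,T;L^2)\cap L^2(0,T;H^1)$ for $T$ small. This follows from energy estimates on differences, using the Lipschitz dependence of $s\mapsto s^{-1-\alpha}$ and $\theta\mapsto\theta^\beta$ on the compact ranges above. Since $X_T$ is closed in the weak norm (by lower semicontinuity of the $H^1$ bounds), the fixed point exists. The same difference estimate, combined with Gronwall's inequality, gives uniqueness. Continuity in time with values in $H^1$ follows from the standard interpolation $L^2(H^2)\cap H^1(L^2)\subset C(H^1)$. Compatibility at the boundary is needed only so that the Neumann and Dirichlet traces are consistent with the $H^1$ data. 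The upper bound on $\theta$ comes from the $H^1$ bound via Sobolev embedding.

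The main obstacle is the $\theta$ equation. Its quadratic terms $u_x^2/v^{1+\alpha}$, $|\mathbf w_x|^2/v$ and $|\mathbf b_x|^2/v$ are only in $L^1$ in time with $L^\infty$ norms in space when $u_x\in L^2(H^1)$. The degenerate conductivity $\theta^\beta$ also has to stay nondegenerate. To handle these, I first estimate $\theta$ in $L^\infty(H^1)$ by testing with $-\theta_{xx}$ and bounding the source by $\|u_x\|_{L^\infty}\|u_x\|_{L^2}\|\theta_{xx}\|_{L^2}$. I then close using the already controlled $\|u_{xx}\|_{L^2(L^2)}$, which keeps every constant dependent only on $M_0$, $V_0$, $\Theta_0$, $\beta$ and not on $\alpha$.
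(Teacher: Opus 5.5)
Your sketch is correct. The paper gives no proof of Lemma~\ref{lem:01a}; it simply refers to \cite{CaoPengSun2021,LuShiXiong2021,TongWangZhang2026,LiLiu2026}. Your argument (freeze the coefficients, use $L^2$ and $H^1$ energy estimates, get a self-map for small $T$, then a contraction in $L^\infty(0,T;L^2)\cap L^2(0,T;H^1)$) is the standard way such local results are proved.

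Your observation that $s^{-1-\alpha}$ and its derivative $-(1+\alpha)s^{-2-\alpha}$ are bounded on $[\tfrac12\inf v_0,\,2\sup v_0]$ uniformly in $\alpha\in[0,1]$ is what justifies the last sentence of the lemma. That sentence ($\alpha$-independence of $T_0$) is only asserted in the paper, so your argument supplies something the paper omits.

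One small slip: the first bound in your item 2 uses $\theta_t\in L^2(0,T;H^1(\Omega))$, which is not available in the class \eqref{eq:finite}. Drop it and keep your second option,
$\|\theta(t)-\theta_0\|_{L^\infty}\le C\|\theta(t)-\theta_0\|_{L^2}^{1/2}\|(\theta(t)-\theta_0)_x\|_{L^2}^{1/2}\le CT^{1/4}$, which is all you need to keep $\theta\geq\tfrac12\inf\theta_0$.
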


Our main result extends this local solution to the whole interval $[0,\infty)$   and gives  estimates
independent of the final time.
\begin{theorem}\label{thm:main} 
Under the assumptions of Lemma \ref{lem:01a}, there exists a positive constant $\alpha_0\in(0,1]$, 
 depending only on $\beta$,  $\|(v_0-1, u_0, \theta_0-1, \mathbf{w}_0, \mathbf{b}_0)\|_{H^1(\Omega)},$ $\inf\limits_{\Omega} v_0,$ and $\inf\limits_{\Omega} \theta_0$,  such that  the initial-boundary-value problem \eqref{eq:mass}-\eqref{eq:bc-C},
or \eqref{eq:mass}-\eqref{eq:data}, \eqref{eq:bc-N}, or  \eqref{eq:mass}-\eqref{eq:data}, \eqref{eq:bc-D} with $\alpha\in (0,\alpha_0]$
admits  a unique global strong solution $(v,u,\theta,\mathbf{w},\mathbf{b})$  satisfying 
\begin{equation}\label{eq:global}
 \begin{gathered}
 v-1,u,\theta-1,\mathbf{w},\mathbf{b}\in C([0,T];H^1(\Omega)),\\[2mm]
 v_t\in C([0,T];L^2(\Omega))\cap L^2(0,T;H^1(\Omega)),\\[2mm]
 u_t,\mathbf{w}_t,\mathbf{b}_t,\theta_t,u_{xx},\mathbf{w}_{xx},\mathbf{b}_{xx},\theta_{xx}
 \in L^2(\Omega\times(0,T)).
 \end{gathered}
\end{equation}
 Moreover, there exist
constants $c,C>0$ depending only on $\beta$,  $\|(v_0-1, u_0, \theta_0-1, \mathbf{w}_0, \mathbf{b}_0)\|_{H^1(\Omega)},$ $\inf\limits_{\Omega} v_0,$ and $\inf\limits_{\Omega} \theta_0$,  such that
\begin{equation}\label{eq:mainvt}
 0<c\leq v(x,t),\theta(x,t)\leq C,
 \qquad x\in\Omega,\quad t\geq0,
\end{equation}
\begin{equation}\label{eq:mainH1}
 \sup_{t\geq0}
 \norm{(v-1,u,\theta-1,\mathbf{w},\mathbf{b})(\cdot,t)}_{H^1(\Omega)}^2\leq C,
\end{equation}
and
\begin{equation}\label{eq:mainds}
 \begin{aligned}
 \int_0^\infty\bigl\{&
 \norm{v_x(\cdot,t)}_{L^2(\Omega)}^2
 +\norm{(u_x,\theta_x,\mathbf{w}_x,\mathbf{b}_x)(\cdot,t)}_{H^1(\Omega)}^2\\[2mm]
 &+\norm{(u_t,\theta_t,\mathbf{w}_t,\mathbf{b}_t)(\cdot,t)}_{L^2(\Omega)}^2
 \bigr\}\dd t\leq C.
 \end{aligned}
\end{equation}
For every $2<p\leq\infty$,
\begin{equation}\label{eq:maingr}
 \begin{aligned}
 &\lim\limits_{t\rightarrow \infty}\norm{(v-1,u,\theta-1,\mathbf{w},\mathbf{b})(\cdot,t)}_{L^p(\Omega)}+\norm{(v_x,u_x,\theta_x,\mathbf{w}_x,\mathbf{b}_x)(\cdot,t)}_{L^2(\Omega)}
 =0.
 \end{aligned}
\end{equation}
\end{theorem}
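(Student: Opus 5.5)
We argue by continuation from Lemma \ref{lem:01a}: it suffices to derive a priori estimates \eqref{eq:mainvt}--\eqref{eq:mainds} on any $[0,T]$ with constants independent of $T$ and of $\alpha\in(0,\alpha_0]$. We work under the a priori hypothesis
\[
\sup_{x,t}v\le 2\Lambda,
\]
where $\Lambda$ is a constant to be fixed from the data alone. Under this hypothesis $v^{-\alpha}\ge (2\Lambda)^{-\alpha}\ge \tfrac12$ once $\alpha\le\alpha_0$ small, so the viscosity is nondegenerate. The estimates are then derived with constants that do not depend on $\alpha$, and finally closed by showing $v\le\Lambda$.

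\textbf{Step 1 (energy).} Multiply by the relative entropy weights. This gives the basic estimate
\[
\sup_t\int\Bigl(\Phi(v)+\Phi(\theta)+\tfrac{u^2+|\mathbf w|^2+v|\mathbf b|^2}{2}\Bigr)
+\int_0^T\!\!\int\Bigl(\tfrac{v^{-\alpha}u_x^2+|\mathbf w_x|^2+|\mathbf b_x|^2}{v\theta}+\tfrac{\theta^\beta\theta_x^2}{v\theta^2}\Bigr)\le E_0,
\qquad \Phi(s)=s-1-\ln s.
\]
Here $E_0$ depends only on the initial data.

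\textbf{Step 2 (lower bound for $v$).} Use Kazhikhov's representation. Integrating the momentum equation in time and space on unit intervals, and writing $(\mu(v)u_x/v)=(\varphi(v))_t$ with $\varphi(v)=-v^{-\alpha}/\alpha$ (replacing $\ln v$), one gets a formula for $v$ that is exponential-type in $\varphi$. This yields $v\ge c$ uniformly in time on each unit interval, using the localized energy. Uniformity in $\alpha$ is guaranteed because $(v^{-\alpha}-1)/\alpha\to -\ln v$ as $\alpha\to0$.

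\textbf{Step 3 (upper bound for $v$).} Here the a priori hypothesis and smallness of $\alpha$ enter. On the set $\{v\ge1\}$,
\[
|\varphi(v)-\varphi(1)|\approx|\ln v|\,(1+O(\alpha\ln\Lambda)).
\]
Hence $\varphi$ controls $\ln v$ with error $\alpha\ln\Lambda\cdot\ln v$. This error is absorbed when $\alpha_0\ln\Lambda$ is small, which closes $v\le C_1$. Choosing $\Lambda=C_1$ then fixes $\alpha_0$ in terms of the data.

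\textbf{Step 4 (higher-order estimates).}
\begin{itemize}
\item Estimate $\int_0^T\|v_x\|^2$ via the effective viscous flux $v^{-\alpha-1}u_x-P-\tfrac12|\mathbf b|^2$, using $(v^{-\alpha}v_x/v)_t=(u_x v^{-\alpha}/v)_x$.
\item Obtain $H^1$ bounds on $\mathbf w,\mathbf b,u$ from standard multipliers.
\item For $\theta$, first bound $\int\!\!\int\theta^\beta\theta_x^2$ via the $(\theta-2)_+$ cut-off together with $\int_0^T\max_{x}(\theta-\tfrac32)_+^{\ldots}$ (Li--Liang / Li--Xu technique). Then get a uniform upper bound for $\theta$ from $\|\theta_x\|$ estimates using $K(\theta)=\int\theta^\beta$.
\item Obtain the lower bound for $\theta$ through the $1/\theta$ equation with the maximum-principle-type argument localized on unit intervals.
\end{itemize}

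\textbf{Step 5 (large-time behavior).} From \eqref{eq:mainds}, the quantities $\|(v_x,u_x,\theta_x,\mathbf w_x,\mathbf b_x)\|^2$ have integrable time derivatives and are integrable in time, so they tend to $0$. Combined with the uniformly bounded $L^2$ norms (from the energy and the bounds on $v,\theta$) and interpolation, this gives the $L^p$ decay for $p>2$.

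\textbf{Main obstacle.} The main obstacle is Step 3, the time-uniform upper bound for $v$ with degenerate viscosity. The danger is that the constant from the energy inequality multiplies $\Lambda^\alpha$, creating circularity. I would handle this first by making every constant explicit in $\Lambda^{\alpha}$ and choosing $\alpha_0\le 1/\ln(e+\Lambda)$.
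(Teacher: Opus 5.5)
\textbf{The gap is in Step 3.} Your comparison between $\varphi(v)$ and $\ln v$ on $[1,2\Lambda]$ (the paper's \eqref{eq:primitive-close}) only turns an upper bound for $M_\alpha(v)$ into an upper bound for $v$. You never say where a time-independent, $\Lambda$-independent upper bound for $M_\alpha(v)$ itself comes from, and that is the whole difficulty.

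In the normalized representation \eqref{eq:Z-evol}, the pressure is $P_*=\theta/v+\frac12|\mathbf b|^2$. The thermal part gives a negative drift at the maximum point of $Z_N$, but only at good times ($D_\theta\le\delta$). The magnetic part contributes the oscillation of $\frac12|\mathbf b|^2$ over the cell, which has no sign.

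The entropy identity controls only $\int\!\!\int|\mathbf b_x|^2/(v\theta)$. To get $\int_s^t\!\int_{I_N}|\mathbf b||\mathbf b_x|\le C\sqrt{1+t-s}$, which the linear drift can beat, you need $\int_s^t\!\int_\Omega v\theta|\mathbf b|^2\le C(1+t-s)$. No upper bound for $\theta$ is available at this stage. The crude substitute $\max_{I_N}\theta\le C(1+D_\theta\max_{I_N}v)$ only yields $\log(Z_N/\overline Z_N)\le C(1+\sqrt\Lambda)$, i.e.\ $v\le Ce^{C\sqrt\Lambda}$, which does not improve $v\le 2\Lambda$.

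The paper obtains \eqref{eq:b-cap} from the auxiliary elliptic problem $-(\psi_x/v)_x+\psi=\eta$, $F=-\psi_x/v$, through the identity \eqref{eq:rest} for $\mathcal H$. This is also where smallness of $\alpha$ is used essentially, beyond the primitive comparison. Writing $\mu=1+(\mu-1)$ leaves the error $\int(\mu-1)Fuu_x$. It is absorbed using \eqref{eq:heat-conv} and $\|\mu-1\|_{L^\infty}\le\delta_*$, with $\delta_*$ fixed by the data before $\Lambda$ (the paper's $M_*$) and $\alpha_0$ are chosen. Your stated main obstacle is not the real one: the entropy identity \eqref{eq:ent-id} is exact and carries no factor $\Lambda^\alpha$.

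\textbf{Secondary points.} In Step 2, pointwise convergence of $(1-v^{-\alpha})/\alpha$ to $\ln v$ as $\alpha\to0$ is not what gives uniformity in $\alpha$. What you need are uniform inequalities, such as $e^{M_\alpha(v)}\le v$ and $M_1\le M_\alpha\le M_0$, or the two-sided comparison \eqref{eq:fM} under a bootstrap.

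More importantly, Step 2 relies on $\theta/v$ pushing $v$ up where $v$ is small, but no lower bound for $\theta$ is available there. The paper obtains both time-uniform lower bounds, for $v$ and for $\theta$, from the fact that the bad set $\{t: D_\theta(t)>\delta\}$ has measure at most $E_0/\delta$. Lemma~\ref{l:osc} then gives two-sided bounds on $\theta$ at good times. A plain comparison or maximum-principle argument for $1/\theta$ yields only $T$-dependent bounds.

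Finally, the $\theta$ lower bound must come right after the $v$ lower bound, not at the end of Step 4. The upper bound for $v$ needs it, for the replacement of $\theta^{-2}$ by $\theta^{\beta-2}$ in \eqref{eq:max-pressure}. So does your $v_x$ estimate, through the dissipation $\int\theta|q|^2/(\mu v)$.
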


\begin{remark}\label{rem:large-data}
No smallness assumption is imposed on the initial perturbation
$(v_0-1,u_0,\theta_0-1,\mathbf{w}_0,\mathbf{b}_0)$. For each fixed initial data satisfying
\eqref{eq:init}, there exists $\alpha_0>0$ such that the conclusions of
Theorem~\ref{thm:main} hold for all $0\leq\alpha\leq\alpha_0$. The heat-conductivity exponent
$\beta\geq0$ is arbitrary but fixed; in particular, the constant
heat-conductivity case $\beta=0$ is included. %To our knowledge, the large-data and density-degenerate  longitudinal viscosity asymptotic stability for planar compressible magnetohydrodynamic system has not been proved previously for the Cauchy and half-line problems.

\end{remark}

\begin{remark}\label{rem:degenerate-viscosity} Our case \(0<\alpha\leq\alpha_0\)  corresponds to a genuinely degenerate viscosity regime, where the viscosity coefficient admits no uniform positive lower bound and decays to zero as the specific volume tends to infinity. This setting fundamentally differs from the nondegenerate density-dependent viscosity models investigated in \cite{LiLiu2026}, where a positive constant viscosity term is present to prevent degeneracy.  
\end{remark}

We now outline the main difficulties and key estimates involved in the
proof. To extend the local strong solutions obtained in
Lemma~\ref{lem:01a} globally in time, we derive  time-independent a priori estimates in suitable higher norms. As in the related stability analyses
\cite{HuangShiSun2021,LiXu2026,LiLiu2026}, a central issue is to obtain  time-independent positive 
 upper and  lower bounds for the specific volume and
temperature. The possible degeneracy in longitudinal viscosity $\mu(v)=v^{-\alpha}$ and strong magnetic coupling effect  are  the intrinsic
difficulty, although $0<\alpha\leq 1$ may be very small.  

We first consider the bounds for the specific volume. Since $v_t=u_x$,
the longitudinal viscous flux can be expressed as a time derivative:
\[
 M_\alpha(v)=\int_1^v\frac{\mu(z)}{z}\dd z,
 \qquad
 \partial_t M_\alpha(v)=\frac{\mu(v)u_x}{v}.
\]
This identity underlies the local representation used by Li and
Liu \cite{LiLiu2026} to estimate $v$. For their viscosity law
$\mu(v)=\mu_1+\mu_2v^{-\alpha}$, with $\mu_1,\mu_2>0$, the corresponding
primitive contains the term $\mu_1\log v$ and grows without bound as
$v\to\infty$. In the present case, however,
\begin{equation*}
 M_\alpha(v)=\frac{1-v^{-\alpha}}{\alpha}
 \longrightarrow\frac1\alpha
 \qquad(v\to\infty,\ \alpha>0),
\end{equation*}
while $M_0(v)=\log v$. Thus, an upper bound for $M_\alpha(v)$ is not enough to
provide an upper bound for $v$.  To overcome this difficulty, we use a bootstrap involving only $v$.
Assume temporarily that $M^{-1}\leq v\leq M$, with $M\geq e$.
Under the smallness conditions on $\alpha$ in \eqref{eq:alpha-small},
we recover the comparison
\begin{equation*}
 |M_\alpha(v)-\log v|\leq1
 \quad\Longrightarrow\quad
 e^{-1}v\leq e^{M_\alpha(v)}\leq ev,
\end{equation*}
together with \(\|\mu-1\|_{L^\infty}\leq\delta_*.\);
see \eqref{eq:primitive-close}--\eqref{eq:fM}.
These comparisons allow us to adapt the normalized  argument
of Li and Liu \cite{LiLiu2026} to obtain a uniform positive lower
bound for $v$, and the inverse-temperature truncation argument
then gives a uniform positive lower bound for $\theta$.
In both arguments, the key observation is that the set of times
at which the temperature dissipation exceeds a fixed threshold
has uniformly bounded measure. The resulting lower bounds are
independent of $M$ and $T$, uniformly for the admissible $\alpha$,
and require no uniform estimate for $v_x$;
see Propositions~\ref{p:vlow} and~\ref{p:tlow}.

We next turn to the uniform upper bound for $v$, which requires
control of magnetic-pressure oscillations. Following
Li and Liu~\cite{LiLiu2026}, we obtain this control through a
time-integrated estimate of $v\theta|\mathbf{b}|^2$.
By \eqref{eq:vtb}, it suffices to estimate the time integral of
$\mathcal K_\beta(\theta)\eta$, where $\eta$ and $\mathcal K_\beta$
are defined in \eqref{eq:eta} and \eqref{eq:K-beta}.
To this end, we use the auxiliary elliptic problem
\[
 -\left(\frac{\psi_x}{v}\right)_x+\psi=\eta,
 \qquad F=-\frac{\psi_x}{v}.
\]
Lemma~\ref{l:res} provides bounds for $F$, $\psi$, and the energy
$\mathcal H$ in \eqref{eq:FH}, independently of $M$.

The additional difficulty in applying this argument lies in the
absence of a positive constant part in the viscosity.
In the density-dependent case of~\cite{LiLiu2026}, the positive constant $\mu_1$ produces a positive multiple of
$\int_\Omega\eta u^2\dd x$ in the elliptic energy identity,
which helps control the nonlinear terms.
For the present viscosity law, the bootstrap assumptions give
$\|\mu(v)-1\|_{L^\infty}\leq\delta_*$ by \eqref{eq:mu-close}.
We therefore write $\mu=1+(\mu-1)$ to retain the
constant-viscosity cancellation, leaving an additional error
involving $\mu-1$. This yields
\begin{equation}\label{eq:intro-pp-elliptic}
 \mathcal H_t
 +c\int_\Omega\mathcal K_\beta(\theta)\eta\dd x
 +c\int_\Omega\eta u^2\dd x
 \leq C+
 \left|\int_\Omega(\mu-1)Fuu_x\dd x\right|.
\end{equation}

The additional term must be controlled before a uniform estimate
for $v_x$ is available. Integrating it by parts would introduce
$\mu_x=\mu'(v)v_x$. We therefore estimate it directly by
Young's inequality:
\begin{equation}\label{eq:intro-pp-error}
 \begin{aligned}
 \left|\int_\Omega(\mu-1)Fuu_x\dd x\right|
 &\leq\varepsilon\int_\Omega\frac{\mu u_x^2}{v}\dd x
   +C\varepsilon^{-1}\delta_*^2
        \int_\Omega\frac v\mu F^2u^2\dd x\\
 &\leq\varepsilon\int_\Omega\frac{\mu u_x^2}{v}\dd x
   +C\varepsilon^{-1}\delta_*^2
        \left(1+\int_\Omega\eta u^2\dd x\right).
 \end{aligned}
\end{equation}

In the second inequality of \eqref{eq:intro-pp-error}, we use
$v\leq C(1+\Phi(v))$, together with the bounds for $F$ and $\mu$
and the basic energy estimate, to keep the constants independent
of $M$. It remains to control $\int_\Omega\frac{\mu u_x^2}{v}\dd x$, since the entropy
dissipation contains the additional factor $1/\theta$.
Combining the entropy estimate with the temperature equation
tested against $(1-2/\theta)_+$  gives
\[
 \int_s^t\!\int_\Omega\frac{\mu u_x^2}{v}\dd x\dd r
 \leq C_h\left(
 1+\int_s^t\!\int_\Omega
 \mathcal K_\beta(\theta)\eta\dd x\dd r\right);
\]
see \eqref{eq:heat-conv}.
After integrating \eqref{eq:intro-pp-elliptic} in time,
we first choose $\varepsilon$ sufficiently small to absorb
the contribution involving $\mathcal K_\beta(\theta)\eta$,
and then choose $\delta_*$ sufficiently small to absorb the
one involving $\eta u^2$.
This closes the elliptic estimate with constants independent
of $M$, before any uniform estimate for $v_x$ is used.

Proposition~\ref{p:bcap} consequently gives \eqref{eq:b-cap}.
Together with the magnetic dissipation and the Cauchy--Schwarz
inequality, this yields
\begin{equation}\label{eq:intro-pp-magnetic}
 \int_s^t\!\int_{I_N}|\mathbf{b}||\mathbf{b}_x|\dd x\dd r
 \leq C\sqrt{1+t-s}.
\end{equation}
We can then apply the normalized  argument
of~\cite{LiLiu2026}. Above a fixed threshold, the thermal pressure
provides a negative drift at good times, while the contribution
from the bad-time set has a uniformly bounded time integral.
This drift dominates the square-root growth in
\eqref{eq:intro-pp-magnetic}, giving $v\leq C$ independently
of $M$; see Proposition~\ref{p:v-up}.

To close the volume bootstrap, we fix $\delta_*$ by the preceding
absorption argument and then choose $M_*$ sufficiently large
that the range of $v_0$ and the improved bounds $[c_*,C_*]$
lie strictly inside $(2/M_*,M_*/2)$.
Only afterward do we choose $\alpha_0$ by
\eqref{eq:alpha-choice}.
For $0<\alpha\leq\alpha_0$, the resulting strict improvement
of the assumed volume bounds excludes a finite first exit.
Thus the bootstrap closes without assuming derivative bounds
or a uniform upper bound for $\theta$.

Once \eqref{eq:state} is established, the higher-order estimates
follow the framework of~\cite{LiLiu2026}.
The weighted mechanical and transverse flux estimates,
together with an additional temperature truncation when $\beta=0$,
provide the inputs for the estimate of $v_x$.
The subsequent longitudinal velocity and temperature estimates
yield the uniform upper bound for $\theta$ and the remaining
regularity needed for continuation.
These estimates give global existence and, for each fixed
admissible $\alpha$, convergence to equilibrium;
see Sections~\ref{sec:high} and~\ref{sec:full}. The rest of the paper is organized as follows. Section~\ref{sec:lower}
contains the entropy, temperature oscillation, and primitive
comparison estimates, together with the positive lower bounds for $v$ and
$\theta$. Section~\ref{sec:close} contains the auxiliary elliptic estimate,
the magnetic-pressure oscillation bound, and the uniform upper bound for
$v$, and it closes the volume bootstrap. Section~\ref{sec:high} establishes
the remaining uniform estimates. Section~\ref{sec:full} proves global
continuation and large-time convergence. The appendix records the
regularization argument used to justify the energy calculations.

Let $T_*$ denote the maximal existence time of the solution in
Lemma~\ref{lem:01a}. Unless otherwise stated, the a priori estimates below
are first established on $[0,T]$ with $0<T<T_*$.

\section{Basic estimates and uniform positive lower bounds}\label{sec:lower}
We first derive the entropy bound and its consequences on unit spatial
intervals. Set $\Phi(z)=z-\log z-1$ and
\begin{equation}\label{eq:eta}
\eta=\frac{u^2+|\mathbf{w}|^2+v|\mathbf{b}|^2}{2}+\Phi(v)+\Phi(\theta).
\end{equation}
For the heat-conduction term, define
\begin{equation}\label{eq:K-beta}
\mathcal K_\beta(\xi)=\int_1^\xi s^{\beta-1}(s-1)\dd s.
\end{equation}
The estimates in Remark~\ref{remark:k} imply
\begin{equation}\label{eq:vtb}
 v\theta|\mathbf{b}|^2\leq C\eta+C\mathcal K_\beta(\theta)\eta.
\end{equation}
The corresponding local entropy balance is
\begin{equation}
 \eta_t
 -\left(\frac{(\mathcal K_\beta(\theta))_x}{v}\right)_x
 +\mathcal D_\beta=(\mathcal G_\beta)_x,             \label{eq:ent}
\end{equation}
where
\begin{equation}
 \mathcal D_\beta
 =\frac{\mu u_x^2+|\mathbf{w}_x|^2+|\mathbf{b}_x|^2}{v\theta}
 +\frac{\theta^{\beta-2}\theta_x^2}{v}             \label{eq:D}
\end{equation}
and
\begin{equation}
 \mathcal G_\beta
 =u\left(1-\frac{\theta}{v}-\frac12|\mathbf{b}|^2\right)
 +\frac{\mu uu_x}{v}+\mathbf{w}\cdot \mathbf{b}
 +\frac{\mathbf{w}\cdot \mathbf{w}_x+\mathbf{b}\cdot \mathbf{b}_x}{v}.                \label{eq:G}
\end{equation}

Throughout the a priori estimates, $C>0$ denotes a generic constant
depending only on $\beta$, the initial data, and the chosen boundary
condition. Unless stated otherwise, $C$ is independent of $T$, the unit
interval, the bootstrap bound $M$, and
$\alpha\in[0,\alpha_0]$ under the relevant smallness assumptions.

We shall use the following elementary consequences of the relative
entropy.
\begin{remark}\label{remark:k}
For every $\xi>0$,
\begin{equation}\label{eq:Phi-square}
 |\xi-1|^2\leq C\Phi(\xi)\{1+\Phi(\xi)\}.
\end{equation}
Moreover,
\begin{equation}\label{eq:Phi}
 \xi\leq C\{1+\Phi(\xi)\}.
\end{equation}
In addition, $\mathcal K_\beta(\theta)\geq0$ for $\theta>0$ and
$\mathcal K_\beta(\theta)\geq\mathcal K_0(\theta)=\Phi(\theta)$ for
$\theta\geq1$. Hence
$\theta\leq C\{1+\mathcal K_\beta(\theta)\}$, which, together with
\eqref{eq:eta}, proves \eqref{eq:vtb}.
\end{remark}

\begin{lemma}\label{l:ent}
For every $0\leq t\leq T$,
\begin{equation}
 \int_\Omega\eta(x,t)\dd x
 +\int_0^t\!\int_\Omega \mathcal D_\beta\dd x\dd s
 =E_0,                                                \label{eq:ent-id}
\end{equation}
where
\begin{equation*}
 E_0=\int_\Omega\left\{
 \frac{u_0^2+|\mathbf{w}_0|^2+v_0|\mathbf{b}_0|^2}{2}
 +\Phi(v_0)+\Phi(\theta_0)\right\}\dd x.
\end{equation*}
\end{lemma}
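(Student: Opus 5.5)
The plan is to integrate the local entropy balance \eqref{eq:ent} over $\Omega\times(0,t)$ and check that every flux term vanishes. First I will verify \eqref{eq:ent} itself. Multiply \eqref{eq:mom} by $u$, \eqref{eq:trans} by $\mathbf{w}$, and \eqref{eq:ind} by $\mathbf{b}$. Subtract the resulting mechanical and magnetic energy identity from the total energy equation \eqref{eq:temp}. This gives the internal energy equation
\[
\theta_t+\frac{\theta}{v}u_x=\left(\frac{\theta^\beta\theta_x}{v}\right)_x+\frac{\mu u_x^2+|\mathbf{w}_x|^2+|\mathbf{b}_x|^2}{v}.
\]
The term $\frac12 v|\mathbf{b}|^2$ in $\eta$ is handled through the identity $(v\mathbf{b})_t\cdot\mathbf{b}=\bigl(\tfrac12 v|\mathbf{b}|^2\bigr)_t+\tfrac12|\mathbf{b}|^2u_x$.

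Next I multiply the internal energy equation by $1-\theta^{-1}$. I use $\Phi(v)_t=(1-v^{-1})u_x$ and the identity $(1-\theta^{-1})\bigl(\theta^\beta\theta_x/v\bigr)_x=\bigl((\mathcal K_\beta(\theta))_x/v\bigr)_x-\theta^{\beta-2}\theta_x^2/v$, which holds because $\mathcal K_\beta'(\theta)=\theta^{\beta-1}(\theta-1)$. Adding everything, the pressure terms combine into $\bigl(u(1-\theta/v)\bigr)_x$. The transverse terms $\mathbf{b}_x\cdot\mathbf{w}+\mathbf{w}_x\cdot\mathbf{b}$ combine into $(\mathbf{w}\cdot\mathbf{b})_x$. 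The dissipative terms split into the flux pieces in \eqref{eq:G} plus $-\mathcal D_\beta$. This reproduces \eqref{eq:ent} with exactly $\mathcal D_\beta$ and $\mathcal G_\beta$ as defined. It is a routine but bookkeeping-heavy computation.

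Then I integrate over $\Omega\times(0,t)$. It remains to show that the boundary and far-field contributions of $\mathcal G_\beta-(\mathcal K_\beta(\theta))_x/v$ vanish.
\begin{itemize}
\item At $x=0$ for \eqref{eq:bc-N} and \eqref{eq:bc-D}: $u=\mathbf{w}=0$ kills every term in $\mathcal G_\beta$ except $\mathbf{b}\cdot\mathbf{b}_x/v$. That term vanishes under Neumann because $\mathbf{b}_x=0$, and under Dirichlet because $\mathbf{b}=0$. The heat flux term vanishes since $(\mathcal K_\beta(\theta))_x=\theta^{\beta-1}(\theta-1)\theta_x$ and $\theta_x(0,t)=0$.
\item At infinity: on $[0,T]$, with $T<T_*$, the solution has the regularity \eqref{eq:finite}, so $v$ and $\theta$ are bounded above and below. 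The quantities $u,\mathbf{w},\mathbf{b},\theta-1,v-1$ lie in $H^1$, and $u_x,\mathbf{w}_x,\mathbf{b}_x,\theta_x$ lie in $H^1$ for a.e.\ $t$. Each term of the flux is therefore an $L^1$-in-time function with values in $W^{1,1}(\Omega)$, so it tends to $0$ as $|x|\to\infty$ for a.e.\ $t$.
\end{itemize}

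The main obstacle is rigor rather than algebra. The strong solutions have only $L^2$ second derivatives, and multiplying by $1-\theta^{-1}$ and integrating over an unbounded domain needs justification. I would first integrate against a cutoff $\varphi_R(x)=\varphi(x/R)$ and show that the commutator terms $\int\mathcal G_\beta\varphi_R'$ and $\int(\mathcal K_\beta)_x\varphi_R'/v$ are $O(R^{-1/2})$ in $L^1(0,t)$ by Cauchy--Schwarz, and then let $R\to\infty$. Alternatively, following the appendix, I would carry out the computation for mollified or smooth approximate data and pass to the limit using continuity in $H^1$. Finally, since $\eta\ge0$ and $\mathcal D_\beta\ge0$, monotone convergence gives the identity \eqref{eq:ent-id} with $E_0=\int_\Omega\eta(x,0)\dd x$.
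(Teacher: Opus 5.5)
Your proposal is correct and follows the paper's proof. You use the same multipliers $1-v^{-1}$, $u$, $\mathbf{w}$, $\mathbf{b}$, $1-\theta^{-1}$, and you rightly apply the last one to the internal energy equation, which is what the paper's instruction to multiply \eqref{eq:temp} by $1-1/\theta$ must mean. You also use the same identity for $\mathbf{b}\cdot(v\mathbf{b})_t$, the same vanishing of the fluxes at $x=0$ from the boundary conditions, and spatial cutoffs with the finite-time strong-solution bounds to remove the fluxes at infinity; your $O(R^{-1/2})$ commutator estimate via $\|\varphi_R'\|_{L^2}$, followed by monotone convergence, just supplies details the paper leaves implicit.
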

\begin{proof}
Multiply \eqref{eq:mass}, \eqref{eq:mom}, \eqref{eq:trans},
\eqref{eq:ind}, and \eqref{eq:temp} by
\[
 1-\frac1v,\qquad u,\qquad \mathbf{w},\qquad \mathbf{b},\qquad
 1-\frac1\theta,
\]
respectively. Adding the resulting identities and using
\[
 \left(1-\frac1v\right)v_t=\Phi(v)_t
\]
and
\[
 \left(1-\frac1\theta\right)\theta_t=\Phi(\theta)_t
\]
together with
\[
 \mathbf{b}\cdot(v\mathbf{b})_t
 =\frac12\bigl(v|\mathbf{b}|^2\bigr)_t+\frac12|\mathbf{b}|^2u_x
\]
give \eqref{eq:ent}.
For either half-line problem, $\theta_x(0,t)=0$ gives
\[
 \frac{(\mathcal K_\beta(\theta))_x}{v}(0,t)=0.
\]
The flux $\mathcal G_\beta$ vanishes at $x=0$: in
\eqref{eq:bc-N}, this follows from $u=\mathbf{w}=0$ and $\mathbf{b}_x=0$, while in
\eqref{eq:bc-D} it follows from $u=\mathbf{w}=\mathbf{b}=0$. Integrating
\eqref{eq:ent} against spatial cutoffs that converge to one and using
the finite-time strong-solution bounds removes the fluxes at infinity.
Integration in time then yields \eqref{eq:ent-id}.
\end{proof}

For later localization, call an integer $N$ \emph{admissible} if
$N\in\mathbb Z$ when $\Omega=\mathbb R$, and if
$N\in\mathbb N_0=\{0,1,2,\ldots\}$ when $\Omega=\mathbb R_+$. In both
cases, set $I_N=[N,N+1]\subset\overline\Omega$.

\begin{lemma}\label{l:cells}
There exist constants $0<m_0<M_0<\infty$ such that, for every
admissible integer $N$ and every $0\leq t\leq T$,
\begin{subequations}\label{eq:cell}
\begin{equation}\label{eq:cell-v}
 m_0\leq\int_{I_N}v\dd x\leq M_0.
\end{equation}
\begin{equation}\label{eq:cell-theta}
 m_0\leq\int_{I_N}\theta\dd x\leq M_0.
\end{equation}
\end{subequations}
In addition,
\begin{equation}
 \left|\left\{x\in\Omega:\theta(x,t)<\frac12\right\}\right|
 \leq C,\qquad 0\leq t\leq T,                       \label{eq:cold}
\end{equation}
and, for every fixed $K>1$,
\begin{equation}
 \left|\{\theta>K\}\right|
 +\int_{\{\theta>K\}}(\theta+v)\dd x
 \leq C_K,\qquad 0\leq t\leq T.                   \label{eq:hot}
\end{equation}
Here $\{\theta>K\}=\{x\in\Omega:\theta(x,t)>K\}$.
\end{lemma}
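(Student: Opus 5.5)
The plan is to derive everything from the entropy identity \eqref{eq:ent-id} of Lemma~\ref{l:ent}. It gives the uniform bound $\int_\Omega\{\Phi(v)+\Phi(\theta)\}\dd x\leq E_0$ for $0\leq t\leq T$, and all four claims of Lemma~\ref{l:cells} are consequences of this bound together with the elementary properties of $\Phi$ in Remark~\ref{remark:k}. No differential inequality is needed; the lemma is purely static, applied at each fixed time.

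\textbf{Upper bounds in \eqref{eq:cell}.} By \eqref{eq:Phi},
\[
\int_{I_N}v\dd x\leq C\int_{I_N}\{1+\Phi(v)\}\dd x\leq C(1+E_0),
\]
since $|I_N|=1$. The same bound holds for $\theta$. This gives $M_0$.

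\textbf{Lower bounds in \eqref{eq:cell}.} I would use Jensen's inequality, since $\Phi$ is convex. Setting $\bar v=\int_{I_N}v\dd x$,
\[
\Phi(\bar v)\leq\int_{I_N}\Phi(v)\dd x\leq E_0.
\]
Because $\Phi(z)\to\infty$ as $z\to0^+$, the inequality $\Phi(\bar v)\leq E_0$ forces $\bar v\geq m_0$, where $m_0$ is the smaller root of $\Phi(z)=E_0$. Jensen's inequality equally gives $\bar v\leq$ the larger root, which is an alternative route to $M_0$. The identical argument applies to $\theta$.

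\textbf{Bound \eqref{eq:cold}.} On $\{\theta<\tfrac12\}$ we have $\Phi(\theta)\geq\Phi(\tfrac12)>0$, because $\Phi$ is decreasing on $(0,1)$. Chebyshev's inequality then gives
\[
\bigl|\{\theta<\tfrac12\}\bigr|\leq\frac{E_0}{\Phi(1/2)}.
\]
In the same way, $|\{\theta>K\}|\leq E_0/\Phi(K)$ for $K>1$.

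\textbf{Bound \eqref{eq:hot}.} For the $\theta$-integral, \eqref{eq:Phi} gives
\[
\int_{\{\theta>K\}}\theta\dd x\leq C\int_{\{\theta>K\}}\{1+\Phi(\theta)\}\dd x\leq C\bigl(|\{\theta>K\}|+E_0\bigr)\leq C_K.
\]
The $v$-integral is the only point that needs a split, and it is the step I expect to require the most care, since $v$ and $\theta$ are unrelated pointwise. I would write
\[
\int_{\{\theta>K\}}v\dd x\leq C\int_{\{\theta>K\}}\{1+\Phi(v)\}\dd x\leq C\bigl(|\{\theta>K\}|+E_0\bigr),
\]
again using \eqref{eq:Phi}. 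So the measure bound alone suffices, and the dependence on $K$ enters only through $1/\Phi(K)$.

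All constants depend only on $E_0$, which is controlled by the $H^1$ norm of the initial data and by $\inf v_0$ and $\inf\theta_0$, via \eqref{eq:Phi-square} and the Sobolev embedding. In particular, they are independent of $T$, $N$, $M$ and $\alpha$.
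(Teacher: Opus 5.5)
Your proposal is correct and is essentially the paper's proof: Jensen's inequality for the convex $\Phi$ on the unit cell $I_N$ gives the two-sided cell bounds, Chebyshev with $\Phi(\tfrac12)$ gives \eqref{eq:cold}, and \eqref{eq:Phi} combined with the entropy bound $\int_\Omega\{\Phi(v)+\Phi(\theta)\}\dd x\leq E_0$ gives \eqref{eq:hot}, exactly as in the paper. One small slip is in your closing remark: \eqref{eq:Phi-square} bounds $|\xi-1|^2$ by $\Phi$ rather than the reverse, so the finiteness of $E_0$ should instead be justified by $\Phi(\xi)\leq(\xi-1)^2/(2\min\{\xi,1\})$ together with $\inf v_0,\inf\theta_0>0$, and by $\int v_0|\mathbf{b}_0|^2\dd x\leq(1+C\|v_0-1\|_{H^1})\|\mathbf{b}_0\|_{L^2}^2$.
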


\begin{proof}
Jensen's inequality and \eqref{eq:ent-id} give
\[
 \Phi\left(\int_{I_N}v\dd x\right)
 \leq\int_{I_N}\Phi(v)\dd x\leq E_0.
\]
Since $\Phi(z)\to\infty$ as $z\to0^+$ or $z\to\infty$, the 
average of $v$ on $I_N$ lies in a fixed compact subset of $(0,\infty)$. The same
argument applied to $\theta$ proves \eqref{eq:cell-theta}. Moreover,
\[
 E_0\geq\int_{\{\theta<\frac12\}}\Phi(\theta)\dd x
 \geq\Phi\left(\frac12\right)
 \left|\left\{\theta<\frac12\right\}\right|,
\]
which proves \eqref{eq:cold}.

For fixed $K>1$,
\[
 1+\theta\leq C_K\Phi(\theta)
 \quad\text{on }\{\theta>K\}.
\]
Thus \eqref{eq:ent-id} controls both the measure of $\{\theta>K\}$
and the integral of $\theta$ over this set. By \eqref{eq:Phi},
\[
 \int_{\{\theta>K\}}v\dd x
 \leq C\left|\{\theta>K\}\right|
 +C\int_\Omega\Phi(v)\dd x\leq C_K.
\]
This proves \eqref{eq:hot}.
\end{proof}

\begin{lemma}\label{l:osc}
There exist constants $m_\theta,C,c_\theta,C_\theta,\delta>0$ such that,
for almost every $t\in[0,T]$ and every $x\in\Omega$,
\begin{equation}
 \theta(x,t)\geq m_\theta
 -C\int_\Omega\frac{\theta^{\beta-2}\theta_y^2}{v}\dd y.
                                                               \label{eq:osclin}
\end{equation}
In particular, if
\[
 D_\theta(t):=\int_\Omega
 \frac{\theta^{\beta-2}\theta_x^2}{v}\dd x\leq\delta,
\]
then
\begin{equation}
 c_\theta\leq\theta(x,t)\leq C_\theta,
 \qquad x\in\Omega.                                  \label{eq:good-two}
\end{equation}
\end{lemma}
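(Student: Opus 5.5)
The plan is to work with a power of $\theta$ chosen so that its derivative is controlled by $D_\theta$. Set $\gamma=\beta/2$ and $g(\theta)=\theta^{\gamma}$ when $\beta>0$, and $g(\theta)=\log\theta$ when $\beta=0$. Then $|g(\theta)_x|=c\,\theta^{(\beta-2)/2}|\theta_x|$ in both cases. For $x,y\in\Omega$, Cauchy--Schwarz gives
\[
 |g(\theta(x,t))-g(\theta(y,t))|
 \leq C\left(\int_\Omega\frac{\theta^{\beta-2}\theta_z^2}{v}\dd z\right)^{1/2}
 \left(\int_{x}^{y}v\dd z\right)^{1/2}.
\]
To keep the $v$-integral bounded, we restrict $y$ to a unit cell $I_N$ containing $x$, or to the union of two adjacent cells when $x$ lies near an endpoint. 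By \eqref{eq:cell-v}, $\int v\dd z\leq 2M_0$ there. Hence the oscillation of $g(\theta)$ over the cell is at most $C D_\theta(t)^{1/2}$.

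Next we locate a point of the cell where $\theta$ is not small. By \eqref{eq:cell-theta}, $\int_{I_N}\theta\dd x\geq m_0$, so there is $y_N\in I_N$ with $\theta(y_N,t)\geq m_0$. For $\beta>0$ this gives
\[
 \theta(x,t)^{\gamma}\geq m_0^{\gamma}-C D_\theta^{1/2}.
\]
To reach the linear form \eqref{eq:osclin}, we split into two cases. If $C D_\theta^{1/2}\leq m_0^\gamma/2$, then $\theta\geq 2^{-1/\gamma}m_0=:2m_\theta$, which implies \eqref{eq:osclin} with any $C\geq0$. Otherwise $D_\theta\geq c_1>0$, and then $m_\theta-C'D_\theta\leq0<\theta$ as soon as $C'\geq m_\theta/c_1$. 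So \eqref{eq:osclin} holds in both cases. The case $\beta=0$ is identical with $\log$: there $\log\theta(x)\geq\log m_0-CD_\theta^{1/2}$. This case distinction avoids having to convert a square root into a linear bound directly.

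For the upper bound in \eqref{eq:good-two}, we use \eqref{eq:cell-theta} again to find $z_N\in I_N$ with $\theta(z_N,t)\leq M_0$. Then $g(\theta(x))\leq g(M_0)+C\delta^{1/2}$, which gives $\theta\leq C_\theta$. The lower bound follows from \eqref{eq:osclin} once $C\delta\leq m_\theta/2$, that is, with $c_\theta=m_\theta/2$.

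The main technical point is the almost-every-$t$ qualifier. The strong solution gives $\theta(\cdot,t)\in H^1_{\rm loc}$ for a.e. $t$ with $\theta>0$ and continuous, and $D_\theta(t)$ is finite for a.e. $t$ by Lemma~\ref{l:ent}. This justifies the pointwise fundamental theorem of calculus for $g(\theta)$. The second subtlety is that the constants must be uniform in $N$ and $t$. Both are handled by the cell bounds of Lemma~\ref{l:cells}, which depend only on $E_0$. On the half-line, cells $I_N$ with $N\geq0$ cover $\overline\Omega$, so no boundary issue arises.
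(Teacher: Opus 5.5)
Your proposal is correct and follows essentially the paper's argument. Both bound the oscillation of $\theta^{\beta/2}$ (or $\log\theta$ when $\beta=0$) on a unit cell by Cauchy--Schwarz and the cell bounds of Lemma~\ref{l:cells}, then split into small and large $D_\theta(t)$ to reach \eqref{eq:osclin}. The only differences are cosmetic: you anchor at a point where $\theta\geq m_0$ and get a constant lower bound when $D_\theta(t)$ is small, which avoids the paper's Lipschitz inversion of $z\mapsto z^{2/\beta}$ and its AM--GM step.
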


\begin{proof}
Fix an admissible $N$. By \eqref{eq:cell-theta} and continuity on $I_N$,
there exists $y_N(t)\in I_N$ such that
\[
 \theta(y_N(t),t)=\int_{I_N}\theta\dd y\in[m_0,M_0].
\]
If $\beta>0$, then, for $x\in I_N$, the Cauchy--Schwarz inequality and
\eqref{eq:cell-v} yield
\begin{align*}
 \left|\theta^{\frac{\beta}{2}}(x,t)
 -\theta^{\frac{\beta}{2}}(y_N(t),t)\right|
 &\leq\frac\beta2
 \left(\int_{I_N}\frac{\theta^{\beta-2}\theta_y^2}{v}\dd y
 \right)^{\frac12}
 \left(\int_{I_N}v\dd y\right)^{\frac12}\\
 &\leq C_0D_\theta(t)^{\frac12}.
\end{align*}
Choose $\delta_0>0$ so that
$C_0\delta_0^{1/2}\leq\frac12m_0^{\beta/2}$. If
$D_\theta(t)\leq\delta_0$, then $\theta^{\beta/2}(x,t)$ lies in a
fixed compact subinterval of $(0,\infty)$. The map
$z\mapsto z^{2/\beta}$ is Lipschitz on this interval; hence
\[
 |\theta(x,t)-\theta(y_N(t),t)|
 \leq C_1D_\theta(t)^{1/2}.
\]
If $\beta=0$, applying the chain rule to $\log\theta$ gives
\begin{align*}
 |\log\theta(x,t)-\log\theta(y_N(t),t)|
 &\leq
 \left(\int_{I_N}\frac{\theta_y^2}{v\theta^2}\dd y\right)^{1/2}
 \left(\int_{I_N}v\dd y\right)^{1/2}\\
 &\leq C_0D_\theta(t)^{1/2}.
\end{align*}
After decreasing $\delta_0$, the exponential map is Lipschitz on the
resulting compact range, and the same estimate
\[
 |\theta(x,t)-\theta(y_N(t),t)|
 \leq C_1D_\theta(t)^{1/2}
\]
holds when $\beta=0$.

Thus, whenever $D_\theta(t)\leq\delta_0$,
\[
 \theta(x,t)\geq m_0-C_1D_\theta(t)^{1/2}
 \geq\frac{m_0}{2}-\frac{C_1^2}{2m_0}D_\theta(t).
\]
The same oscillation estimate gives
\[
 \theta(x,t)\leq M_0+C_1D_\theta(t)^{1/2}
 \leq M_0+C_1\delta_0^{1/2}.
\]
Set $m_\theta=m_0/2$ and enlarge $C$ so that
$C\geq C_1^2/(2m_0)$ and $C\geq m_\theta/\delta_0$. For
$D_\theta(t)>\delta_0$, the right-hand side of \eqref{eq:osclin} is
nonpositive; hence positivity of $\theta$ proves \eqref{eq:osclin} for
all admissible times. Finally, the bound
$\theta(x,t)\leq M_0+C_1\delta_0^{1/2}$ and \eqref{eq:osclin}
give uniform upper and lower bounds when
$D_\theta(t)\leq\delta_0$. Taking
\[
 \delta\leq\min\left\{\delta_0,\frac{m_\theta}{2C}\right\}
\]
proves \eqref{eq:good-two}.
\end{proof}

We will repeatedly use the following one-dimensional inequalities:
\begin{equation}\label{eq:GN-infty}
 \norm{f}_{L^\infty}^2\leq2\norm{f}_{L^2}\norm{f_x}_{L^2}.
\end{equation}
For $2<p\leq\infty$,
\begin{equation}\label{eq:GN}
 \norm{f}_{L^p}\leq C_p\norm{f}_{L^2}^{\frac12+\frac1p}
 \norm{f_x}_{L^2}^{\frac12-\frac1p},
\end{equation}
valid for $f\in H^1(\Omega)$.

We use a bootstrap assumption only for the specific volume. For a
constant $M\geq e$, suppose temporarily that
\begin{equation}\label{eq:boot}
 M^{-1}\leq v(x,t)\leq M,
 \qquad (x,t)\in\Omega\times[0,T],
\end{equation}
and set $L=\log M$. We shall choose $\alpha$ so that
\begin{equation}\label{eq:alpha-small}
 0\leq\alpha\leq1,\qquad
 \alpha L\leq\delta_*/2,\qquad \alpha L^2\leq1/2,
\end{equation}
where $0<\delta_*\leq1/2$ is fixed later by the elliptic estimate.
The primitive associated with the viscous flux is
\begin{equation}\label{eq:Mv}
 M_\alpha(z)=\int_1^z\frac{\mu(s)}s\dd s
 =\begin{cases}
 (1-z^{-\alpha})/\alpha,&\alpha>0,\\
 \log z,&\alpha=0.
 \end{cases}
\end{equation}
For $y=\log z$, we have
\[
 M_\alpha(e^y)-y=\int_0^y(e^{-\alpha r}-1)\dd r.
\]
Hence the mean value theorem and \eqref{eq:alpha-small} give
\begin{equation}\label{eq:primitive-close}
 |M_\alpha(z)-\log z|
 \leq\tfrac12\alpha e^{\alpha L}L^2\leq1,
 \qquad M^{-1}\leq z\leq M.
\end{equation}
The same assumptions imply
\begin{subequations}\label{eq:mu-basic}
\begin{equation}\label{eq:mu-bounds}
 \tfrac12\leq\mu(v)\leq2.
\end{equation}
Moreover,
\begin{equation}\label{eq:mu-close}
 \norm{\mu(v)-1}_{L^\infty(\Omega\times(0,T))}
 \leq e^{\alpha L}-1\leq\delta_*.
\end{equation}
\end{subequations}
Indeed, since $2\alpha L\leq\delta_*\leq1/2$,
\[
 e^{\alpha L}-1=e^{(2\alpha L)/2}-1\leq2\alpha L\leq\delta_*.
\]
The bound $|\alpha\log v|\leq\alpha L\leq1/4$ also gives
$1/2\leq\mu(v)\leq2$. Exponentiating
\eqref{eq:primitive-close}, we also obtain
\begin{equation}\label{eq:fM}
 e^{-1}z\leq e^{M_\alpha(z)}\leq ez,
 \qquad M^{-1}\leq z\leq M. 
\end{equation}
All constants in \eqref{eq:primitive-close}--\eqref{eq:fM} are
independent of $M$ and $\alpha$.

Set
\begin{equation}\label{eq:Pstar}
 P_* =\frac\theta v+\frac{|\mathbf{b}|^2}{2}.
\end{equation}
The effective viscous flux is
\begin{equation}\label{eq:sigma}
 \sigma=\frac{\mu u_x}{v}-P_*.
\end{equation}
For each admissible interval $I_N=[N,N+1]$, set
% \begin{equation}\label{eq:Bdef}
%  B_N^\alpha(x,t)
%  =\exp\left\{M_\alpha(v_0(x))
%        +\int_N^x\bigl(u(y,t)-u_0(y)\bigr)\,\dd y\right\},
% \end{equation}
% and
% \begin{equation}\label{eq:Ydef}
%  Y_N(t)=\exp\left\{\int_0^t\sigma(N,s)\,\dd s\right\}.
% \end{equation}
For each admissible $N$, define
\begin{equation}\label{eq:Udef}
 U_N(x,t)=\int_N^x u(y,t)\dd y
 -\int_{I_N}\int_N^z u(y,t)\dd y\dd z.
\end{equation}
We also set
\begin{equation}\label{eq:Zdef}
 Z_N(x,t)=\exp\{M_\alpha(v(x,t))-U_N(x,t)\}
\end{equation}
and
\begin{equation}\label{eq:Zbar-def}
 \overline Z_N(t)=\int_{I_N}Z_N\dd x.
\end{equation}

\begin{lemma}\label{l:Kaz}
Under \eqref{eq:boot}--\eqref{eq:alpha-small}, there exist constants
$c,C>0$, independent of $M$, $T$, and $\alpha$, such that, for every
admissible $N$,
\begin{equation}\label{eq:Zbounds}
 c\,v(x,t)\leq Z_N(x,t)\leq C\,v(x,t),
 \qquad (x,t)\in I_N\times[0,T],
\end{equation}
\begin{equation}\label{eq:Zbar-bounds}
 c\leq \overline Z_N(t)\leq C,
 \qquad 0\leq t\leq T,
\end{equation}
and
\begin{equation}\label{eq:Zmean}
 0\leq
 \frac{1}{\overline Z_N(t)}
 \int_{I_N}Z_N(x,t)P_*(x,t)\,\dd x
 \leq C.
\end{equation}
In addition,
\[
 t\longmapsto
 \log\frac{Z_N(\cdot,t)}{\overline Z_N(t)}
\]
belongs to $AC([0,T];C(I_N))$ and satisfies
\begin{equation}\label{eq:Z-evol}
 \partial_t\log\frac{Z_N(x,t)}{\overline Z_N(t)}
 =
 P_*(x,t)
 -
 \frac{1}{\overline Z_N(t)}
 \int_{I_N}Z_N(y,t)P_*(y,t)\,\dd y.
\end{equation}
\end{lemma}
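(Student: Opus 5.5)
The plan is to compare $Z_N$ with $v$ pointwise on $I_N$, then average over the cell, and finally differentiate $\log Z_N$ in time using the momentum equation.

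\emph{Pointwise comparison.} On $I_N$ we have $|U_N(x,t)|\leq 2\int_{I_N}|u|\dd y\leq 2\bigl(\int_{I_N}u^2\dd y\bigr)^{1/2}\leq 2\sqrt{2E_0}$ by Cauchy--Schwarz and Lemma~\ref{l:ent}. Hence $e^{-U_N}$ lies in a fixed interval $[e^{-C},e^{C}]$. Combined with \eqref{eq:fM}, which holds under \eqref{eq:boot}--\eqref{eq:alpha-small}, this gives \eqref{eq:Zbounds} with constants independent of $M$, $T$, and $\alpha$.

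\emph{Cell averages.} Integrating \eqref{eq:Zbounds} over $I_N$ and using \eqref{eq:cell-v} gives \eqref{eq:Zbar-bounds}. For \eqref{eq:Zmean}, nonnegativity is clear since $Z_N>0$ and $P_*\geq0$. For the upper bound, \eqref{eq:Zbounds} gives $Z_NP_*\leq C\,v\bigl(\theta/v+|\mathbf b|^2/2\bigr)=C\theta+Cv|\mathbf b|^2$. Its integral over $I_N$ is bounded by $C M_0+C E_0$, using \eqref{eq:cell-theta} and $\int v|\mathbf b|^2\leq 2E_0$. Dividing by $\overline Z_N\geq c$ then yields \eqref{eq:Zmean}.

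\emph{Evolution identity.} First, $\partial_t M_\alpha(v)=\mu(v)v_t/v=\mu u_x/v$ by \eqref{eq:mass}. Next, from \eqref{eq:mom}, $u_t=(\mu u_x/v-P_*)_x=\sigma_x$ with $\sigma$ as in \eqref{eq:sigma}. Therefore $\partial_t\int_N^x u\dd y=\sigma(x,t)-\sigma(N,t)$, and
\[
\partial_t U_N(x,t)=\sigma(x,t)-\int_{I_N}\sigma(z,t)\dd z,
\]
where the $\sigma(N,t)$ terms cancel. Consequently
\[
\partial_t\log Z_N=\frac{\mu u_x}{v}-\sigma+\bar\sigma_N=P_*+\bar\sigma_N(t),\qquad \bar\sigma_N(t)=\int_{I_N}\sigma\dd z.
\]
Multiplying by $Z_N$ and integrating over $I_N$ gives $\overline Z_N'=\int_{I_N}Z_NP_*\dd y+\bar\sigma_N\overline Z_N$. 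Subtracting $\overline Z_N'/\overline Z_N$ removes the unknown term $\bar\sigma_N$ and produces \eqref{eq:Z-evol}. This elimination of $\bar\sigma_N$ is the purpose of the normalization.

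\emph{Main obstacle: regularity.} The formal computation must be justified for strong solutions. For $t\in[0,T]$ we have $v,u\in C([0,T];H^1)$ with $v$ bounded above and below, and $v_t=u_x\in L^2(0,T;H^1)\subset L^2(0,T;C(I_N))$, so $M_\alpha(v)\in AC([0,T];C(I_N))$. For $U_N$, the relation $u_t\in L^2(\Omega\times(0,T))$ shows that $\partial_tU_N$ is in $L^2(0,T;C(I_N))$. Identifying it with $\sigma-\bar\sigma_N$ requires $\sigma\in L^1(0,T;C(I_N))$. This holds because $u_x\in L^2(0,T;H^1)$ and $\theta,\mathbf b\in C([0,T];H^1)$.

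The right-hand side of \eqref{eq:Z-evol} is then in $L^1(0,T;C(I_N))$, since $P_*\in C([0,T];C(I_N))$ and $\overline Z_N$ is bounded below. This gives the stated absolute continuity. Where pointwise manipulation is delicate, I would argue via mollification in time, or by the regularization recorded in the appendix.
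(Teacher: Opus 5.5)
Your proposal is correct and follows the paper's proof essentially step for step. The common steps are the $L^2$ bound $|U_N|\leq 2\|u(t)\|_{L^2}\leq C$ combined with \eqref{eq:fM}, averaging with \eqref{eq:cell-v} and \eqref{eq:cell-theta}, and the identity $(\log Z_N)_t=P_*+\int_{I_N}\sigma\dd y$, whose cell-average term is eliminated by subtracting $(\log\overline Z_N)_t$. Your regularity discussion (for instance $\sigma\in L^2(0,T;C(I_N))$ and $\partial_tU_N\in L^2(0,T;C(I_N))$) is a more explicit version of the paper's appeal to \eqref{eq:finite} and the state bounds.
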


\begin{proof}
By \eqref{eq:ent-id} and the definition of $U_N$ in
\eqref{eq:Udef},
\[
 |U_N(x,t)|
 \leq 2\|u(t)\|_{L^2(\Omega)}
 \leq C.
\]
Since
\[
 Z_N(x,t)=\exp\{M_\alpha(v(x,t))-U_N(x,t)\},
\]
the comparison estimate \eqref{eq:fM} yields \eqref{eq:Zbounds}.
Integrating over $I_N$ and using \eqref{eq:cell-v} gives
\eqref{eq:Zbar-bounds}. Moreover,
\[
 0\leq \int_{I_N}Z_NP_*\,\dd x
 \leq C\int_{I_N}\bigl(\theta+v|\mathbf{b}|^2\bigr)\,\dd x
 \leq C,
\]
and hence \eqref{eq:Zmean} follows from \eqref{eq:Zbar-bounds}.

Since $v_t=u_x$ and $u_t=\sigma_x$, the definition of $U_N$ gives
\[
 (U_N)_t
 =
 \sigma-\int_{I_N}\sigma\,\dd y.
\]
On the other hand,
\[
 (M_\alpha(v))_t
 =
 \frac{\mu(v)v_t}{v}
 =
 \frac{\mu(v)u_x}{v}
 =
 \sigma+P_*.
\]
Therefore,
\[
 (\log Z_N)_t
 =
 P_*+\int_{I_N}\sigma\,\dd y.
\]
Differentiating the definition of $\overline Z_N$ in
\eqref{eq:Zbar-def} gives
\[
 (\log\overline Z_N)_t
 =
 \frac{1}{\overline Z_N}
 \int_{I_N}Z_NP_*\,\dd y
 +\int_{I_N}\sigma\,\dd y.
\]
Subtracting these two identities proves \eqref{eq:Z-evol}.

Finally, on each fixed cell $I_N$, the finite-time regularity in
\eqref{eq:finite} and the bootstrap state bounds justify the above
differentiations and imply
\[
 t\longmapsto
 \log\frac{Z_N(\cdot,t)}{\overline Z_N(t)}
 \in AC([0,T];C(I_N)).
\]
\end{proof}

\begin{proposition}\label{p:vlow}
Under \eqref{eq:boot}--\eqref{eq:alpha-small}, there exists $c>0$,
independent of $M$, $T$, and $\alpha$, such that
\begin{equation}\label{eq:vlow}
 v(x,t)\geq c,\qquad (x,t)\in\Omega\times[0,T].
\end{equation}
\end{proposition}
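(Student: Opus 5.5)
The plan is to work with the normalized quantity $\log(Z_N/\overline Z_N)$ from Lemma~\ref{l:Kaz} and to exploit the sign structure of \eqref{eq:Z-evol}. By \eqref{eq:Zbounds} and \eqref{eq:Zbar-bounds}, a lower bound for $v$ on $I_N$ is equivalent, up to constants independent of $M$, $T$, $\alpha$, to a lower bound for $Z_N/\overline Z_N$. So it suffices to show
\[
 \log\frac{Z_N(x,t)}{\overline Z_N(t)}\geq -C,\qquad x\in I_N,\ 0\leq t\leq T,
\]
with $C$ uniform in $N$. At $t=0$ this holds because $Z_N(x,0)\geq c\,v_0(x)\geq c\inf v_0$. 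Integrating \eqref{eq:Z-evol} over $[s,t]$ and using $P_*\geq\theta/v\geq0$ together with \eqref{eq:Zmean}, we obtain
\[
 \log\frac{Z_N(x,t)}{\overline Z_N(t)}-\log\frac{Z_N(x,s)}{\overline Z_N(s)}
 \geq \int_s^t\frac{\theta(x,r)}{v(x,r)}\dd r-C(t-s).
\]
The linear loss $-C(t-s)$ has to be compensated by the gain $\theta/v$ whenever $v$ is small.

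For this I will split times into good and bad ones. Call $r$ a good time if $D_\theta(r)\leq\delta$; by Lemma~\ref{l:osc}, $\theta(\cdot,r)\geq c_\theta$ at such times. The bad set has measure at most $E_0/\delta$, because $\int_0^T D_\theta\dd r\leq E_0$ by \eqref{eq:ent-id}.

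The argument is then by contradiction, with a threshold. Fix a small level $\varepsilon_0$, chosen so that $c_\theta\,c^{-1}\varepsilon_0^{-1}\geq 2C$, where the constants come from \eqref{eq:Zbounds}, \eqref{eq:Zbar-bounds} and \eqref{eq:Zmean}. Then whenever $Z_N/\overline Z_N\leq\varepsilon_0$, $v$ is small, so $\theta/v\geq 2C$ at good times. Suppose $Z_N(x,t)/\overline Z_N(t)$ falls below $\varepsilon_0$, and let $s<t$ be the last time before $t$ at which it equals $\varepsilon_0$; if no such time exists, the initial bound already gives the claim. On $[s,t]$ the inequality above gives
\[
 \log\frac{Z_N(x,t)}{\overline Z_N(t)}\geq\log\varepsilon_0+2C\,|[s,t]\cap\text{good}|-C(t-s)
 \geq\log\varepsilon_0+C(t-s)-2C\,|\text{bad}|,
\]
and the right-hand side is at least $\log\varepsilon_0-2CE_0/\delta$. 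This is a lower bound independent of $M$, $T$, $\alpha$ and $N$, and transferring it back through \eqref{eq:Zbounds} gives \eqref{eq:vlow}.

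The step needing the most care is the pointwise-in-$x$ integration of \eqref{eq:Z-evol}. This requires the $AC([0,T];C(I_N))$ regularity stated in Lemma~\ref{l:Kaz}, plus a.e.-in-time application of Lemma~\ref{l:osc}. It also requires that every constant used (from \eqref{eq:fM}, the cell bounds and the entropy) be independent of $M$; this is exactly what \eqref{eq:alpha-small} guarantees. The other point to check is that the bad-time measure bound $E_0/\delta$ is global in time rather than per cell, so the compensation argument holds uniformly in $N$ and for all $t$.
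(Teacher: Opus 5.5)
Your proposal is correct and is essentially the paper's argument: the paper also works with $R_N=\log(Z_N/\overline Z_N)$, bounds the bad set $\mathcal B=\{D_\theta>\delta\}$ by the entropy identity, and picks a threshold $-A$ below which $\theta/v$ exceeds the weighted pressure average at good times. It then integrates from the left endpoint of a component of $\{R_N<\min\{-A,R_N(0)\}\}$; it only needs $R_N'\geq0$ at good times, so your stronger drift $\geq C$ is harmless but unnecessary. The one point to tidy is your ``no crossing time'' case: it is not settled by the initial bound alone, but by running the same integration from $s=0$ (or by taking the threshold below the initial lower bound of the ratio), exactly as the paper does with $m_N=\min\{-A,R_N(0)\}$.
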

\begin{proof}
% The representation \eqref{eq:local-rep} is the starting point of the
% lower-bound argument.  Its normalized form \eqref{eq:primitive-rep} has
% the evolution law \eqref{eq:Z-evol}, which is the form used below.

The normalized quantity
\[
R_N(x,t)=\log\frac{Z_N(x,t)}{\overline Z_N(t)}
\]
and its evolution law \eqref{eq:Z-evol} are the starting point of the
lower-bound argument.

Let $D_\theta$ be as in Lemma~\ref{l:osc}, choose $\delta$ from
\eqref{eq:good-two}, and set
\[
 \mathcal B=\{t\in[0,T]:D_\theta(t)>\delta\}.
\]
The entropy estimate gives
\begin{equation}\label{eq:bad-measure}
 |\mathcal B|+\int_{\mathcal B}D_\theta(t)\dd t\leq C.
\end{equation}
Fix an admissible $N$ and $x\in I_N$, and write
$R_N(t)=\log(Z_N(x,t)/\overline Z_N(t))$. By
\eqref{eq:Zbounds} and \eqref{eq:Zbar-bounds},
\[
 v(x,t)\leq Ce^{R_N(t)}.
\]
For almost every $t\notin\mathcal B$, estimate
\eqref{eq:good-two} yields $\theta(x,t)\geq c_\theta$. We may therefore
choose $A>0$, independently of $N$, $x$, and $M$, so that
$R_N(t)<-A$ implies
\[
 \frac{\theta(x,t)}{v(x,t)}
 >\frac1{\overline Z_N(t)}\int_{I_N}Z_NP_*\dd y.
\]
Equation \eqref{eq:Z-evol}, together with \eqref{eq:Zmean} and
$P_*\geq0$, gives, for almost every $t$,
\begin{equation}
 R_N'(t)\geq
 \begin{cases}
 0, & t\notin\mathcal B,\quad R_N(t)<-A,\\
 -C, & t\in\mathcal B\ \text{or}\ R_N(t)\geq-A.
 \end{cases}                                             \label{eq:R-case}
\end{equation}
The initial data, \eqref{eq:Zbounds}, and \eqref{eq:Zbar-bounds} imply
$R_N(0)\geq-C$. Put
$m_N=\min\{-A,R_N(0)\}$. If $(t_-,t_+)$ is a connected component of
$\{t:R_N(t)<m_N\}$, then, for $t\in(t_-,t_+)$,
\begin{align}
 R_N(t)
 &=R_N(t_-)+\int_{t_-}^tR_N'(r)\,\dd r \notag\\
 &\geq m_N-C|\mathcal B\cap(t_-,t)|
 \geq m_N-C|\mathcal B|
 \geq-C.                                             \label{eq:R-lower}
\end{align}
Thus $Z_N\geq c$ and, by \eqref{eq:Zbounds}, $v\geq c$ on $I_N$.
Since $N$ is arbitrary, \eqref{eq:vlow} follows.
\end{proof}

\begin{proposition}\label{p:tlow}
Suppose that $v\geq c_1>0$ and $\mu\geq c_2>0$ on
$\Omega\times[0,T]$, where $c_1$ and $c_2$ are independent of $T$.
For every fixed $\beta\geq0$, there exists a constant $c>0$,
depending on the initial data, $\beta$, $c_1$ and $c_2$, but independent of 
$T$, such that
\begin{equation}
 \theta(x,t)\geq c,
 \qquad (x,t)\in\Omega\times[0,T].                    \label{eq:t-low}
\end{equation}
\end{proposition}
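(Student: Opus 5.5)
We prove the lower bound for $\theta$ with the inverse-temperature truncation argument. The input from Lemma~\ref{l:osc} is that $\theta$ is uniformly bounded below at good times, and the bad-time set has bounded measure. Set $w=1/\theta$. From the internal energy equation,
\[
 \theta_t-\Bigl(\frac{\kappa\theta_x}{v}\Bigr)_x=-\frac{\theta}{v}u_x+\frac{\mu u_x^2+|\mathbf{w}_x|^2+|\mathbf{b}_x|^2}{v}.
\]
Complete the square in $u_x$:
\[
 \frac{\mu u_x^2}{v}-\frac{\theta u_x}{v}\geq-\frac{\theta^2}{4\mu v}.
\]
This gives $\theta_t-(\kappa\theta_x/v)_x\geq-\theta^2/(4\mu v)$. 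Since $\mu\geq c_2$ and $v\geq c_1$, the right side is at least $-C_0\theta^2$ with $C_0=1/(4c_1c_2)$.

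In terms of $w$, we get the differential inequality
\[
 w_t-\Bigl(\frac{\kappa(\theta)w_x}{v}\Bigr)_x\leq C_0-\frac{2\kappa(\theta)w_x^2}{v\,w}\leq C_0,
\]
which is the standard comparison for $1/\theta$. Applying a pure maximum principle would give $\sup w\leq\sup w_0+C_0t$, which grows in $t$. This is the main obstacle. To remove the growth we use the bad-time structure.

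Fix a level $k_0=2/c_\theta$, where $c_\theta$ is from \eqref{eq:good-two}. For $t\notin\mathcal B$, \eqref{eq:good-two} gives $w\leq 1/c_\theta<k_0$ everywhere, so $(w-k_0)_+\equiv0$ at good times. Test the inequality with $(w-k)_+^{p-1}$ for $k\geq k_0$ and $p\geq2$. The boundary terms vanish because $\theta_x(0,t)=0$. The integrability at infinity holds since $(w-k)_+$ is supported in $\{\theta<1/k\}\subset\{\theta<\tfrac12\}$, whose measure is bounded by \eqref{eq:cold}. Dropping the nonnegative diffusion term gives
\[
 \frac{d}{dt}\int_\Omega(w-k)_+^p\dd x\leq pC_0\int_\Omega(w-k)_+^{p-1}\dd x.
\]

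Take $k=\max\{k_0,\sup w_0\}$. The quantity $\int(w-k)_+^p$ starts at $0$, vanishes at good times, and can grow only on $\mathcal B$. Consider a maximal interval on which it is positive. Its left endpoint is a good time or $0$, and the interval lies in $\mathcal B$ up to a null set. By Hölder and the measure bound, $\int(w-k)_+^{p-1}\leq C(\int(w-k)_+^p)^{(p-1)/p}$, so $y=\int(w-k)_+^p$ satisfies $(y^{1/p})'\leq C_0C^{1/p}$. Integrating and using $|\mathcal B|\leq C$ from \eqref{eq:bad-measure} gives
\[
 \|(w-k)_+\|_{L^p}\leq C_0|\{\theta<\tfrac12\}|^{1/p}\,|\mathcal B|.
\]
This bound is uniform in $p$. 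Letting $p\to\infty$ yields $\sup w\leq k+C_0C$, that is, $\theta\geq c$ uniformly in $T$.

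The delicate point is making sure the increase really occurs only on $\mathcal B$. For that we need $\mathcal B$, defined via $D_\theta(t)>\delta$, to be well defined for a.e.\ $t$. We also need $t\mapsto\int(w-k)_+^p$ to be absolutely continuous, which follows from the finite-time regularity \eqref{eq:finite} together with $\theta$ being bounded below on $[0,T]$ by Lemma~\ref{lem:01a}. If the cleaner pointwise route is preferred, the same argument runs with an ODE comparison for $\sup_x w$ in place of the $L^p$ norms. The $L^p$ route avoids differentiating a supremum, and its bound stays uniform as $p\to\infty$.
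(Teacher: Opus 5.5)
Your proposal is correct and is essentially the paper's proof. The paper tests the temperature equation with $-\theta^{-2}Z_\theta^p$, where $Z_\theta=(\theta^{-1}-A_0)_+$ and $A_0=\max\{2,2/m_\theta,\|\theta_0^{-1}\|_{L^\infty}\}$, and absorbs $\int_\Omega u_xZ_\theta^p/(v\theta)$ by Young's inequality against the viscous dissipation, which is the integrated form of your completing the square. It then uses H\"older on $\{\theta<\frac12\}$, the vanishing of $Z_\theta$ at good times, and integration over connected components of the positivity set (whose length is at most $|\mathcal B|$) before letting $p\to\infty$, exactly as you do. Two cosmetic points: build $2$ into your level, $k_0=\max\{2,2/c_\theta\}$, so that $\{w>k\}\subset\{\theta<\frac12\}$ is explicit; and the left endpoint of a positivity interval need not be a good time, only a point where $\int_\Omega(w-k)_+^p\dd x=0$ by continuity, which is all your argument actually uses.
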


\begin{proof}
Set
\begin{equation}\label{eq:A0}
 A_0=\max\left\{2,\frac{2}{m_\theta},
 \norm{\theta_0^{-1}}_{L^\infty(\Omega)}\right\}.
\end{equation}
Define the inverse-temperature cutoff by
\begin{equation}\label{eq:invcut}
 Z_\theta=(\theta^{-1}-A_0)_+.
\end{equation}
Fix $p>2$. Multiplying \eqref{eq:temp} by $-\theta^{-2}Z_\theta^p$ and
integrating over $\Omega$, we have
\begin{align}
 &\frac1{p+1}\frac{\dd}{\dd t}\int_\Omega Z_\theta^{p+1}\dd x
 +\int_\Omega\frac{\theta^\beta\theta_x^2}{v}
 \left(2\theta^{-3}Z_\theta^p+p\theta^{-4}Z_\theta^{p-1}\right)\dd x
 \notag\\
 &\quad+\int_\Omega
 \frac{\mu u_x^2+|\mathbf{w}_x|^2+|\mathbf{b}_x|^2}{v\theta^2}Z_\theta^p\dd x
 =\int_\Omega\frac{u_x}{v\theta}Z_\theta^p\dd x.          \label{eq:inv-id}
\end{align}
Here the chain rule is applied to the positive part in
\eqref{eq:invcut}; the heat-flux boundary terms vanish by the boundary
conditions. This inverse-temperature test is also used in
\cite{CaoPengSun2021,LuShiXiong2021,TongWangZhang2026}.
Young's inequality gives
\begin{align*}
 \left|\int_\Omega\frac{u_x}{v\theta}Z_\theta^p\dd x\right|
 &\leq\frac12\int_\Omega\frac{\mu u_x^2}{v\theta^2}Z_\theta^p\dd x
 +\frac1{2c_2}\int_\Omega\frac{Z_\theta^p}{v}\dd x\\
 &\leq\frac12\int_\Omega\frac{\mu u_x^2}{v\theta^2}Z_\theta^p\dd x
 +C\int_\Omega Z_\theta^p\dd x.
\end{align*}
After absorbing the first term into \eqref{eq:inv-id} and discarding
the remaining nonnegative terms, we find that
\begin{equation}
 \frac1{p+1}\frac{\dd}{\dd t}\int_\Omega Z_\theta^{p+1}\dd x
 \leq C\int_\Omega Z_\theta^p\dd x.                       \label{eq:inv-Lp}
\end{equation}

Since $A_0\geq2$,
\[
 \{x\in\Omega:Z_\theta(x,t)>0\}
 \subset\{x\in\Omega:\theta(x,t)<\frac12\}.
\]
Thus, \eqref{eq:cold} and H\"older's inequality give
\begin{equation}
 \int_\Omega Z_\theta^p\dd x
 \leq C\left(\int_\Omega Z_\theta^{p+1}\dd x\right)^{\frac p{p+1}},
                                                               \label{eq:invH}
\end{equation}
where $C$ is independent of $p$. Define
\begin{equation}\label{eq:Xp}
 X_p(t)=\int_\Omega Z_\theta^{p+1}(x,t)\dd x.
\end{equation}
Set
\begin{equation}\label{eq:Yp}
 Y_p(t)=X_p(t)^{\frac1{p+1}}
 =\norm{Z_\theta(t)}_{L^{p+1}(\Omega)}.
\end{equation}
The strong-solution regularity and the chain rule for the positive part
imply that $X_p$ is absolutely continuous on every finite time interval.
Hence $Y_p$ is continuous, and
\[
 \mathcal O_p=\{0\leq t<T:Y_p(t)>0\}
\]
is open. On every compact subinterval of $\mathcal O_p$, $X_p$ is
bounded away from zero, so $Y_p$ is absolutely continuous there.
Using \eqref{eq:inv-Lp},
\eqref{eq:invH}, and the chain rule, we obtain
\begin{equation}
 Y_p'(t)
 =\frac1{p+1}X_p(t)^{-\frac p{p+1}}X_p'(t)
 \leq C                                                \label{eq:Ypdt}
\end{equation}
for almost every $t\in\mathcal O_p$, where $C$ is independent of $p$
and $T$.

With $\delta$ as in Lemma~\ref{l:osc}, set
\[
 \mathcal B  =\left\{0\leq t<T:
 \int_\Omega\frac{\theta^{\beta-2}\theta_x^2}{v}\dd x>\delta
 \right\}.
\]
By \eqref{eq:ent-id} and Chebyshev's inequality, we obtain
\[
 |\mathcal B|\leq\frac{E_0}{\delta}.
\]
The choice of $\delta$ and \eqref{eq:osclin} also imply that, for
almost every $t\notin\mathcal B$,
\begin{equation}
 \inf_{x\in\Omega}\theta(x,t)\geq\frac{m_\theta}{2},
                                                        \label{eq:good}
\end{equation}
 and hence $Z_\theta(\cdot,t)=0$ by \eqref{eq:invcut}. Consequently,
\[
 |\mathcal O_p\setminus\mathcal B|=0.
\]

Fix $t\in[0,T)$. If $Y_p(t)=0$, there is nothing to prove.
Otherwise, let $(t_-,t_+)$ be the connected component of $\mathcal O_p$
containing $t$. The choice
$A_0\geq\norm{\theta_0^{-1}}_{L^\infty}$ in
\eqref{eq:invcut} gives $Y_p(0)=0$.
By continuity, $Y_p(t_-)=0$.
For $0<\varepsilon<t-t_-$, integrating \eqref{eq:Ypdt}
over $[t_-+\varepsilon,t]$ yields
\[
 Y_p(t)\leq Y_p(t_-+\varepsilon)+C(t-t_- -\varepsilon).
\]
Letting $\varepsilon\to0^+$, we obtain
\[
 Y_p(t)\leq C(t-t_-).
\]
Since $(t_-,t)\subset\mathcal O_p$ and
$|\mathcal O_p\setminus\mathcal B  |=0$,
\[
 t-t_-=|\mathcal B  \cap(t_-,t)|\leq|\mathcal B  |.
\]
It follows that
\[
 Y_p(t)\leq\frac{CE_0}{\delta},
\]
with a constant independent of $p$ and $t$.
Since the bound is independent of $p$, letting $p\to\infty$ gives
\[
 \norm{Z_\theta(t)}_{L^\infty(\Omega)}\leq C,
 \qquad 0\leq t<T.
\]
Finally, \eqref{eq:invcut} gives
\[
 \theta^{-1}(x,t)\leq A_0+C,
 \qquad (x,t)\in\Omega\times[0,T).
\]
The continuity of $\theta$ at $t=T$ completes the proof.
\end{proof}

\section{A uniform upper bound for the specific volume}\label{sec:close}

The upper bound for $v$ requires a time-integrated estimate of the
magnetic-pressure oscillation in \eqref{eq:Z-evol}.  We obtain it from an
auxiliary elliptic problem.  Set $\mathcal V=H^1(\Omega)$ and
$\mathcal V^*=(H^1(\Omega))^*$.
\begin{lemma}\label{l:res}
Fix $0<T<T_*$. For almost every $t\in(0,T)$, there exists a unique
$\psi(t)\in H^1(\Omega)$ satisfying
\begin{equation}
 \int_\Omega\left(\frac{\psi_x\varphi_x}{v}+\psi\varphi\right)\dd x
 =\int_\Omega\eta\varphi\dd x
 \qquad\text{for every }\varphi\in H^1(\Omega).          \label{eq:reswk}
\end{equation}
Define
\begin{equation}\label{eq:Fdef}
 F=-\frac{\psi_x}{v}.
\end{equation}
Set
\begin{equation}\label{eq:FH}
 \mathcal H(t)=\frac12\int_\Omega
 \left(\frac{\psi_x^2}{v}+\psi^2\right)\dd x.
\end{equation}
Then there is a constant $C>0$, independent of $T$, the finite-time
bounds for $v$, $M$, and $\alpha$, such that
\begin{equation}\label{eq:psi-positive}
 \psi\geq0.
\end{equation}
The solution has the same mass as $\eta$:
\begin{equation}\label{eq:res-L1}
 \int_\Omega\psi\dd x=\int_\Omega\eta\dd x.
\end{equation}
Moreover,
\begin{equation}\label{eq:Fx}
 F_x=\eta-\psi,
\end{equation}
\begin{equation}\label{eq:F-inf}
 \norm F_{L^\infty(\Omega)}\leq E_0,
\end{equation}
\begin{equation}\label{eq:resinf}
 \norm\psi_{L^\infty(\Omega)}\leq C,
\end{equation}
and
\begin{equation}\label{eq:resst}
 \mathcal H(t)+\int_\Omega F^2\dd x\leq C.
\end{equation}
On the whole line, $F$ has a continuous representative that tends to zero
at both infinities.  On the half-line, $F(0,t)=0$ and
$F(x,t)\to0$ as $x\to\infty$.  Moreover,
$\mathcal H\in W^{1,1}(0,T)$, and the following identity holds in
$\mathcal D'(0,T)$:
\begin{equation}
 \mathcal H_t
 = \int_\Omega F\left(\mathcal K_\beta(\theta)\right)_x\dd x+\int_\Omega vF\mathcal G_\beta\dd x
 -\int_\Omega\mathcal D_\beta\psi\dd x
 +\frac12\int_\Omega u_xF^2\dd x.                 \label{eq:rest}
\end{equation}
\end{lemma}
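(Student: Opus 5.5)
Existence and uniqueness of $\psi(t)$ will come from Lax--Milgram on $H^1(\Omega)$. For a.e. $t\in(0,T)$ the finite-time bounds give $0<\underline v\leq v\leq\overline v$. Hence the bilinear form $\int(\psi_x\varphi_x/v+\psi\varphi)$ is bounded and coercive. The right side $\varphi\mapsto\int\eta\varphi$ is a bounded functional, because $\eta\in L^1\cap L^2$ for a strong solution (indeed $\eta\leq C(|u|^2+|\mathbf w|^2+|\mathbf b|^2+|v-1|^2+|\theta-1|^2)$ under the finite-time bounds). The natural-boundary formulation on the half-line yields $F(0,t)=0$. Choosing $\varphi$ compactly supported shows that $\psi_x/v\in H^1_{loc}$ with $(\psi_x/v)_x=\psi-\eta$, which is \eqref{eq:Fx}. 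Since $F\in H^1(\Omega)$, it has a continuous representative vanishing at infinity.

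For positivity, test \eqref{eq:reswk} with $\varphi=\psi_-$. Since $\eta\geq0$, this gives $\|\psi_-\|_{H^1}=0$. For \eqref{eq:res-L1}, test with cutoffs $\varphi_R\to1$ having $|\varphi_R'|\leq 1/R$. The term $\int F\varphi_R'$ tends to $0$ because $F\in L^2$ and $\psi,\eta\in L^1$; integrability of $\psi$ itself follows from the same test with the positivity just proved. Then $F_x=\eta-\psi$, $F\to0$ at the ends (or $F(0)=0$), and Lemma~\ref{l:ent} give
\[|F(x)|\leq\max\Bigl\{\int\eta,\int\psi\Bigr\}=\int\eta\leq E_0.\]
This uses $\int_{-\infty}^xF_x=F(x)$, the fact that $\eta,\psi\geq0$, and the equality of total masses, which makes both one-sided integrals bounded by $E_0$.

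For \eqref{eq:resst}, test with $\psi$: $2\mathcal H=\int\eta\psi\leq\|\psi\|_\infty E_0$. For $\|\psi\|_\infty$ I would use \eqref{eq:GN-infty} or a cell argument. On each $I_N$, $\int_{I_N}\psi\leq E_0$, so some point has $\psi\leq E_0$. Moreover $|\psi(x)-\psi(y)|\leq\int_{I_N}v|F|\leq E_0M_0$ by \eqref{eq:cell-v}, which gives $\|\psi\|_\infty\leq C$ independently of $M$ and $\alpha$. Then $\int F^2=\int\psi_x^2/v^2$ is not obviously controlled, since no lower bound on $v$ is uniform here. Instead I would write $\int F^2\leq\|F\|_\infty\int|F|$, or better integrate by parts: $\int F^2=-\int F\psi_x/v$ is not helpful either. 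So I would use $\int F^2\leq E_0\int|F|$ together with $|F|\leq\int_{-\infty}^x\eta$ on the side where the mass is smaller. This is the delicate point, and I expect it to be the main obstacle. My first attempt is to bound $\int F^2$ by combining $F_x=\eta-\psi$ with $\int vF^2\leq\int\psi_x^2/v\leq2\mathcal H$ on the set $\{v\geq c\}$. On the set $\{v<c\}$, whose measure per cell is bounded (via $\Phi(v)$), I would use $\|F\|_\infty\leq E_0$. Summing over cells requires a per-cell smallness, and I would get it from $\sum_N\int_{I_N}(\Phi(v))\leq E_0$.

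Finally, for \eqref{eq:rest}, differentiate $\mathcal H=\frac12\int\eta\psi$ formally. Symmetry of the form gives
\[\mathcal H_t=\int\eta_t\psi-\frac12\int(1/v)_t\psi_x^2\cdot\ldots,\]
precisely $\mathcal H_t=\int\eta_t\psi+\frac12\int u_xF^2$, using $(1/v)_t=-u_x/v^2$ and $\psi_x^2/v^2=F^2$. Substituting \eqref{eq:ent} and integrating by parts with $\psi_x=-vF$, where the boundary terms vanish by the boundary conditions and $F(0)=0$, yields the four terms. To justify $\mathcal H\in W^{1,1}$ I would use difference quotients in $t$ together with the regularity \eqref{eq:finite}.
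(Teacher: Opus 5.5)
Your proposal is correct and follows essentially the paper's route. It uses Lax--Milgram under the finite-time bounds; positivity and mass conservation by testing with $\min\{\psi,0\}$ and cutoffs; $\norm{F}_{L^\infty}\leq E_0$ from the integral representation of $F$; the unit-cell argument for $\norm{\psi}_{L^\infty}$; the split into $\{v\geq c\}$ and $\{v<c\}$ for $\int_\Omega F^2\dd x$; and the identity $\mathcal H_t=\langle\eta_t,\psi\rangle+\frac12\int_\Omega u_xF^2\dd x$ justified by a coercivity-based difference estimate in time.

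Two simplifications are worth noting. First, no per-cell smallness is needed in the $F^2$ step, because $\Phi(v)\geq\Phi(c)$ on $\{v<c\}$ (for $c<1$) gives $|\{v<c\}|\leq E_0/\Phi(c)$ globally. Second, the paper makes your difference-quotient argument precise via $\eta\in W^{1,1}(0,T;\mathcal V^*)$, which uses $\mathcal D_\beta\in L^1$ and $H^1\hookrightarrow L^\infty$, together with $v^{-1}\in W^{1,2}(0,T;L^\infty(\Omega))$.
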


\begin{proof}
On the fixed interval $[0,T]$, Lemma~\ref{lem:01a} gives
$0<c_T\leq v\leq C_T$.  Hence the bilinear form
\[
 a(\phi,\varphi;t)
 =\int_\Omega\left(\frac{\phi_x\varphi_x}{v}
 +\phi\varphi\right)\dd x
\]
is continuous on $H^1(\Omega)\times H^1(\Omega)$ and satisfies
\[
 a(\phi,\phi;t)\geq
 \min\{1,C_T^{-1}\}\norm{\phi}_{\mathcal V}^2.
\]
Since $\eta\in L^1(\Omega)$ and
$H^1(\Omega)\hookrightarrow L^\infty(\Omega)$ in one dimension, the
right-hand side of \eqref{eq:reswk} defines an element of $\mathcal V^*$.
The Lax--Milgram theorem therefore gives a unique $\psi\in H^1(\Omega)$.
Testing \eqref{eq:reswk} with $\min\{\psi,0\}$ and using $\eta\geq0$
shows that $\psi\geq0$.

Choose $0\leq\chi_R\leq1$ such that $\chi_R\uparrow1$ and
$\norm{\chi_R'}_{L^2}\to0$ as $R\to\infty$.  Testing
\eqref{eq:reswk} with $\chi_R$, the gradient term tends to zero by
Cauchy--Schwarz and $c_T\leq v\leq C_T$.  Monotone convergence for
$\psi\chi_R$ and dominated convergence for $\eta\chi_R$ then give
\begin{equation}
 \int_\Omega\psi\dd x=\int_\Omega\eta\dd x\leq E_0. \label{eq:psi-L1}
\end{equation}
The weak equation gives \eqref{eq:Fx} in distributions. On the
half-line, its natural boundary condition is $F(0,t)=0$.  On the whole
line, $F\in L^2(\mathbb R)$ and $F_x\in L^1(\mathbb R)$; hence its
limits at both infinities exist and, by the $L^2$ condition, are zero.
Consequently,
\begin{equation}\label{eq:F-rep}
  F(x,t)=
 \begin{cases}
 \displaystyle\int_{-\infty}^x(\eta-\psi)\dd y,
     &\Omega=\mathbb R,\\[2mm]
 \displaystyle\int_0^x(\eta-\psi)\dd y,
     &\Omega=\mathbb R_+.
  \end{cases}
\end{equation}
Since both $\eta$ and $\psi$ are nonnegative and have the same mass,
\eqref{eq:F-rep} and \eqref{eq:psi-L1} imply
$\norm F_{L^\infty}\leq E_0$.  In the half-line case, equality of the
two masses also gives $F(x,t)\to0$ as $x\to\infty$.

For each admissible integer $N$,
$\int_N^{N+1}\psi\dd x\leq E_0$, so there exists
$x_N\in[N,N+1]$ such that $\psi(x_N,t)\leq E_0$.
Since $\psi_x=-vF$, for $x\in[N,N+1]$ we have
\[
 |\psi(x,t)-\psi(x_N,t)|
 \leq\norm F_{L^\infty}\int_N^{N+1}v\dd y
 \leq E_0M_0.
\]
It follows that $\norm\psi_{L^\infty}\leq E_0(1+M_0)$.  Testing
\eqref{eq:reswk} with $\psi$ now yields
\[
 2\mathcal H(t)=\int_\Omega\eta\psi\dd x
 \leq E_0\norm\psi_{L^\infty}\leq C.
\]
To remove the weight $v$ from the estimate of $F$, split $\Omega$ into
$\{v\geq\frac12\}$ and $\{v<\frac12\}$.  Since
$\Phi(v)\geq\Phi(\frac12)$ on the latter set,
$|\{v<\frac12\}|\leq E_0/\Phi(\frac12)$.  Therefore,
\[
 \int_\Omega F^2\dd x \leq2\int_\Omega vF^2\dd x +\norm F_{L^\infty}^2|\{v<\frac12\}|
 \leq4\mathcal H(t)+\frac{E_0^3}{\Phi(\frac12)}\leq C.
\]
This proves \eqref{eq:F-inf}, \eqref{eq:resinf}, and \eqref{eq:resst};
in particular, their
constants do not depend on the finite-time bounds $c_T$ and $C_T$ used
to construct $\psi$.

We next justify the time regularity of $\mathcal H$ and the identity
\eqref{eq:rest}.  By \eqref{eq:finite} and the one-dimensional Sobolev
embedding,
\[
 (v^{-1})_t=-\frac{v_t}{v^2}\in L^2(0,T;L^\infty(\Omega)).
\]
The finite-time strong-solution bounds also imply
\[
\mathcal G_\beta\in L^2(0,T;L^2(\Omega)).
\]
They also give
\[
\frac{(\mathcal K_\beta(\theta))_x}{v}
=\frac{\theta^{\beta-1}(\theta-1)\theta_x}{v}
\in L^2(0,T;L^2(\Omega)).
\]
while \eqref{eq:ent-id} yields
\[
\mathcal D_\beta\in L^1(\Omega\times(0,T)).
\]
Pairing \eqref{eq:ent} with $\varphi\in H^1(\Omega)$ and using the
boundary conditions, we have
\[
\begin{aligned}
\langle\eta_t,\varphi\rangle
={}&-\int_\Omega
\left\{\mathcal G_\beta+
\frac{(\mathcal K_\beta(\theta))_x}{v}\right\}
\varphi_x\,\dd x
-\int_\Omega\mathcal D_\beta\varphi\,\dd x .
\end{aligned}
\]
The embedding $H^1(\Omega)\hookrightarrow L^\infty(\Omega)$ therefore
gives
\[
\|\eta_t\|_{\mathcal V^*}
\leq
\|\mathcal G_\beta\|_{L^2}
+\left\|\frac{(\mathcal K_\beta(\theta))_x}{v}\right\|_{L^2}
+C\|\mathcal D_\beta\|_{L^1},
\]
and hence
\[
 v^{-1}\in W^{1,2}(0,T; L^\infty(\Omega)).
\]
Moreover,
\[
 \eta\in W^{1,1}(0,T; \mathcal V^*).
\]
Let $E_T\subset(0,T)$ be a set of full measure on which
\eqref{eq:reswk}, \eqref{eq:resst}, and \eqref{eq:ent} hold.  For
$s,t\in E_T$ with $s<t$, subtract \eqref{eq:reswk} at these two times gives
\[
 \int_\Omega\left\{ \frac{(\psi(t)-\psi(s))_x\varphi_x}{v(t)}
 +(\psi(t)-\psi(s))\varphi\right\}\dd x =\langle\eta(t)-\eta(s),\varphi\rangle
 -\int_\Omega\bigl(v^{-1}(t)-v^{-1}(s)\bigr) \psi_x(s)\varphi_x\dd x.
\]
Taking $\varphi=\psi(t)-\psi(s)$ and using the coercivity of
$a(\cdot,\cdot;t)$ on the fixed interval, we find
\begin{align*}
 C_T^{-1}\norm{\psi(t)-\psi(s)}_{\mathcal V}^2
 &\leq\left\{\norm{\eta(t)-\eta(s)}_{\mathcal V^*}
 +\norm{v^{-1}(t)-v^{-1}(s)}_{L^\infty}
 \norm{\psi_x(s)}_{L^2}\right\}
 \norm{\psi(t)-\psi(s)}_{\mathcal V}.
\end{align*}
The bound for $\mathcal H$ and $v\leq C_T$ imply
$\sup_{r\in E_T}\norm{\psi_x(r)}_{L^2}\leq C_T$.  Hence
\begin{align*}
 \norm{\psi(t)-\psi(s)}_{\mathcal V}
 &\leq C_T\left\{
 \norm{\eta(t)-\eta(s)}_{\mathcal V^*}
 +\norm{v^{-1}(t)-v^{-1}(s)}_{L^\infty}\right\}
 \\
 &\leq C_T\int_s^t\left\{
 \norm{\partial_r\eta(r)}_{\mathcal V^*}
 +\norm{\partial_r(v^{-1})(r)}_{L^\infty}\right\}\dd r.
\end{align*}
Thus $t\mapsto\psi(t)$, initially defined on $E_T$, has a unique
extension in $W^{1,1}(0,T;H^1(\Omega))$, still denoted by $\psi$.
The time continuity of $v^{-1}$, $\eta$, and $\psi$ extends
\eqref{eq:reswk} and the uniform bound for $\mathcal H$ to every
$t\in[0,T]$.

The regularities just established permit differentiation of
\eqref{eq:reswk} in $\mathcal V^*$.  For almost every $t$ and every
$\varphi\in H^1(\Omega)$, this gives
\[
 \int_\Omega\left(\frac{\psi_{xt}\varphi_x}{v}
 +\psi_t\varphi\right)\dd x
 =\langle\eta_t,\varphi\rangle
 +\int_\Omega\frac{u_x\psi_x\varphi_x}{v^2}\dd x.
\]
Taking $\varphi=\psi(t)$ and applying the product rule to
\eqref{eq:FH}, with $(v^{-1})_t=-u_x/v^2$, yields
\begin{equation}\label{eq:H-derivative}
 \frac{\dd}{\dd t}\mathcal H =\int_\Omega\left(
 \frac{\psi_x\psi_{xt}}{v}+\psi\psi_t\right)\dd x
 -\frac12\int_\Omega\frac{u_x\psi_x^2}{v^2}\dd x =\langle\eta_t,\psi\rangle
 +\frac12\int_\Omega u_xF^2\dd x.
\end{equation}
It remains to identify $\langle\eta_t,\psi\rangle$.  Pair
\eqref{eq:ent} with $\psi$.  On the half-line, the boundary terms at
$x=0$ vanish because
$\theta_x(0,t)=0$ and $\mathcal G_\beta(0,t)=0$.
The terms at infinity are handled with spatial cutoffs.  Since
$\psi_x=-vF$,
\begin{align*}
 \langle\eta_t,\psi\rangle &
 =-\int_\Omega\frac{\left(\mathcal K_\beta(\theta)\right)_x}{v}\psi_x\dd x-\int_\Omega\mathcal G_\beta\psi_x\dd x
 -\int_\Omega\mathcal D_\beta\psi\dd x
 \\&=\int_\Omega F\left(\mathcal K_\beta(\theta)\right)_x\dd x+\int_\Omega vF\mathcal G_\beta\dd x
 -\int_\Omega\mathcal D_\beta\psi\dd x.
\end{align*}
Substitution into \eqref{eq:H-derivative} proves
\eqref{eq:rest}.  Although $\psi$ was constructed on $[0,T]$, the
bounds \eqref{eq:F-inf}, \eqref{eq:resinf}, and \eqref{eq:resst} are
independent of $T$.
\end{proof}

We now extract from \eqref{eq:rest} the coercive quantities needed for
the magnetic-pressure estimate.  Substituting \eqref{eq:G} into
\eqref{eq:rest} gives
\begin{align}
 &\mathcal H_t-\int_\Omega F\bigl(\mathcal K_\beta(\theta)\bigr)_x\dd x+\int_\Omega\mathcal D_\beta\psi\dd x
 \notag\\&=\int_\Omega Fu\left(v-\theta-\frac12v|\mathbf{b}|^2\right)\dd x
 +\int_\Omega\mu Fuu_x\dd x
 +\int_\Omega Fv\,\mathbf{w}\cdot \mathbf{b}\dd x\notag\\
 &\quad+\int_\Omega F\mathbf{w}\cdot \mathbf{w}_x\dd x
 +\int_\Omega F\mathbf{b}\cdot \mathbf{b}_x\dd x
  +\frac12\int_\Omega u_xF^2\dd x. \label{eq:res-flux}
\end{align}
By \eqref{eq:Fx}, integration by parts yields
\begin{equation}\label{eq:res-parts-k}
 \int_\Omega F\bigl(\mathcal K_\beta(\theta)\bigr)_x\dd x
 =-\int_\Omega(\eta-\psi)\mathcal K_\beta(\theta)\dd x.
\end{equation}
For the term containing $\mathbf{w}$, \eqref{eq:Fx} and integration by parts give
\begin{equation}\label{eq:res-parts-w}
 \int_\Omega F\mathbf{w}\cdot \mathbf{w}_x\dd x
 =-\frac12\int_\Omega(\eta-\psi)|\mathbf{w}|^2\dd x.
\end{equation}
Finally,
\begin{equation}\label{eq:res-parts-b}
 \int_\Omega F\mathbf{b}\cdot \mathbf{b}_x\dd x
 =-\frac12\int_\Omega(\eta-\psi)|\mathbf{b}|^2\dd x.
\end{equation}
For the half-line problems, the boundary terms at $x=0$ vanish because
$F(0,t)=0$; the terms at infinity are justified by spatial cutoffs.
The elementary bounds \eqref{eq:Phi-square} and \eqref{eq:Phi} also give
\begin{align} \left|v-\theta-\frac12v|\mathbf{b}|^2-\eta\right|^2&=\left| (v-1)-(\theta-1)-\frac12v|\mathbf{b}|^2-\eta \right|^2
 \leq C\eta(1+\eta).        \label{eq:resrem}
\end{align}
We shall also need to absorb the term containing
$\mathcal K_\beta(\theta)\psi$.  Given $\varepsilon>0$, choose
$\Lambda>0$, independently of time, so that
$\norm\psi_{L^\infty}/\Lambda\leq\varepsilon$; this is possible by
\eqref{eq:resinf}.  On $\{\eta<\Lambda\}$, the inequality
$\Phi(\theta)\leq\eta<\Lambda$ confines $\theta$ to a compact subset
of $(0,\infty)$.  Thus
$\mathcal K_\beta(\theta)\leq C_{\Lambda,\beta}$ on this set, and
\eqref{eq:res-L1} yields
\begin{equation}
\begin{aligned}
 \int_\Omega\mathcal K_\beta(\theta)\psi\dd x
 &\leq\frac{\norm\psi_{L^\infty}}{\Lambda}
 \int_{\{\eta\geq\Lambda\}}
 \mathcal K_\beta(\theta)\eta\dd x
 +C_{\Lambda,\beta}\int_{\{\eta<\Lambda\}}\psi\dd x\\
 &\leq\varepsilon\int_\Omega
 \mathcal K_\beta(\theta)\eta\dd x+C_\varepsilon.
\end{aligned}
\label{eq:Kpsi}
\end{equation}

We shall use the following estimate. For $0\leq s<t\leq T$,
\begin{equation}
 \int_s^t\!\int_\Omega\frac{\mu(v)u_x^2}{v}\dd x\dd r
 \leq C+C\int_s^t\!\int_\Omega\mathcal K_\beta(\theta)\eta\dd x\dd r.
 \label{eq:heat-conv}
\end{equation}
Estimate \eqref{eq:heat-conv} converts the longitudinal viscous dissipation into the
quantity on the right-hand side of \eqref{eq:Kpsi}.  Set
$\chi(\xi)=\left(1-\frac{2}{\xi}\right)_+$ and
\[
 H(\xi)=
 \begin{cases}
 0,&\xi\leq2,\\
 \xi-2-2\log\frac{\xi}{2},&\xi>2.
 \end{cases}
\]
Then $H'=\chi$ and $0\leq H\leq C\Phi$.  Testing
\eqref{eq:temp} with $\chi(\theta)$ and integrating the heat-conduction
term by parts give
\begin{align*}
 \int_\Omega\frac{\mu(v)u_x^2+|\mathbf{w}_x|^2+|\mathbf{b}_x|^2}{v}\chi(\theta)\dd x
 &=\frac{\dd}{\dd t}\int_\Omega H(\theta)\dd x
 +2\int_{\{\theta>2\}}\frac{\theta^{\beta-2}\theta_x^2}{v}\dd x\\
 &\quad+\int_\Omega\frac{\theta u_x}{v}\chi(\theta)\dd x.
\end{align*}
On the support of $\chi$, Young's inequality and \eqref{eq:vlow} give
\[
 \left|\int_\Omega\frac{\theta u_x}{v}\chi(\theta)\dd x\right|
 \leq\frac12\int_\Omega\frac{\mu(v)u_x^2}{v}\chi(\theta)\dd x
 +C\int_\Omega\mathcal K_\beta(\theta)\eta\dd x.
\]
Here we used $\mu(v)\geq1/2$ and
$\theta^2\leq C\Phi(\theta)^2
\leq C\mathcal K_\beta(\theta)\eta$ on $\{\theta>2\}$.
Integrating in time and using \eqref{eq:ent-id} controls the left-hand
side of \eqref{eq:heat-conv} on $\{\theta>4\}$, where $\chi\geq1/2$.
On $\{\theta\leq4\}$,
\begin{equation}\label{eq:heat-low}
 \frac{\mu u_x^2}{v}
 \leq4\frac{\mu u_x^2}{v\theta},
\end{equation}
The right-hand side of \eqref{eq:heat-low} is controlled by the entropy
dissipation in \eqref{eq:ent-id}.
Combining the two regions proves \eqref{eq:heat-conv}.

Write $\mu=1+(\mu-1)$.  The constant part produces the cancellation
used in the constant-viscosity case, whereas the second part contributes
the error term $\int_\Omega(\mu-1)Fuu_x\dd x$.
\begin{proposition}\label{p:bcap}
If $\delta_*$ in \eqref{eq:alpha-small} is sufficiently small,
depending only on the data and $\beta$, then for $0\leq s<t\leq T$,
\begin{equation}\label{eq:Keta-int}
\int_s^t\!\int_\Omega
\left\{\mathcal K_\beta(\theta)\eta+\eta u^2\right\}\dd x\dd r
\leq C(1+t-s),
\end{equation}
and
\begin{equation}\label{eq:b-cap}
\int_s^t\!\int_\Omega v\theta|\mathbf{b}|^2\dd x\dd r\leq C(1+t-s).
\end{equation}
\end{proposition}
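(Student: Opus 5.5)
We integrate the identity \eqref{eq:res-flux} over $[s,t]$ and extract coercive terms from it. The bound $\mathcal H\leq C$ from \eqref{eq:resst} controls $\mathcal H(t)-\mathcal H(s)$. The term $\int_\Omega\mathcal D_\beta\psi\dd x$ is nonnegative by \eqref{eq:psi-positive} and will be discarded.

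By \eqref{eq:res-parts-k}, the heat-conduction term becomes
\[
-\int_\Omega F(\mathcal K_\beta(\theta))_x\dd x=\int_\Omega\eta\mathcal K_\beta(\theta)\dd x-\int_\Omega\psi\mathcal K_\beta(\theta)\dd x .
\]
The first piece is the first coercive quantity. The second piece is absorbed by \eqref{eq:Kpsi} with $\varepsilon=1/4$.

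The term $\int_\Omega Fu(v-\theta-\frac12v|\mathbf b|^2)\dd x$ is split using \eqref{eq:resrem} as $\int_\Omega Fu\eta\dd x$ plus a remainder. For the part $\int_\Omega\mu Fuu_x\dd x$, write $\mu=1+(\mu-1)$. The constant part gives
\[
\int_\Omega Fuu_x\dd x=-\tfrac12\int_\Omega(\eta-\psi)u^2\dd x .
\]
Its leading piece $-\frac12\int_\Omega\eta u^2\dd x$ moves to the left-hand side and is the second coercive quantity. The piece $\frac12\int_\Omega\psi u^2\dd x$ is at most $C\|u\|_{L^2}^2\leq C$.

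The term $\int_\Omega Fu\eta\dd x$ looks dangerous, so I would combine it with the $\eta u^2$ structure: the plan is to exploit $F_x=\eta-\psi$ and check whether $Fu\eta$ pairs off with the $u$-part of $\eta$. Failing that, I would estimate it crudely by
\[
\Bigl|\int_\Omega Fu\eta\dd x\Bigr|\leq\varepsilon\int_\Omega\eta u^2\dd x+C_\varepsilon\|F\|_{L^\infty}^2\int_\Omega\eta\dd x\leq\varepsilon\int_\Omega\eta u^2\dd x+C_\varepsilon .
\]
This uses \eqref{eq:F-inf} and \eqref{eq:ent-id}. The remainder from \eqref{eq:resrem} is treated the same way, via $|Fu|\sqrt{\eta(1+\eta)}\leq\varepsilon\eta u^2+C_\varepsilon(1+\eta)F^2$, together with \eqref{eq:resst} and $\eta F^2\leq E_0^2\eta$.

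The remaining terms are handled as follows:
\begin{itemize}
\item By \eqref{eq:res-parts-w} and \eqref{eq:res-parts-b}, the $\mathbf w$- and $\mathbf b$-terms equal $-\frac12\int_\Omega(\eta-\psi)(|\mathbf w|^2+|\mathbf b|^2)\dd x$. This has a favorable sign up to $\frac12\int_\Omega\psi(|\mathbf w|^2+|\mathbf b|^2)\dd x\leq C$, using \eqref{eq:resinf} and the entropy bound on $|\mathbf b|^2$ (where $v\geq c$ by Proposition~\ref{p:vlow}).
\item The term $\int_\Omega Fv\,\mathbf w\cdot\mathbf b\dd x$ is at most $C\int_\Omega(|\mathbf w|^2+v|\mathbf b|^2)\dd x\leq C$.
\item For $\frac12\int_\Omega u_xF^2\dd x$, I would use Young's inequality: it is at most $\varepsilon\int_\Omega\frac{\mu u_x^2}{v}\dd x+C_\varepsilon\int_\Omega vF^2\dd x$, and $\int_\Omega vF^2\dd x\leq 2\mathcal H\leq C$.
\end{itemize}

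The main obstacle is the error $\int_\Omega(\mu-1)Fuu_x\dd x$. Following \eqref{eq:intro-pp-error}, it is at most
\[
\varepsilon\int_\Omega\frac{\mu u_x^2}{v}\dd x+C\varepsilon^{-1}\delta_*^2\int_\Omega\frac v\mu F^2u^2\dd x .
\]
Using \eqref{eq:mu-bounds} and $v\leq C(1+\Phi(v))\leq C(1+\eta)$, this is at most $\varepsilon\int_\Omega\frac{\mu u_x^2}{v}\dd x+C\varepsilon^{-1}\delta_*^2\bigl(1+\int_\Omega\eta u^2\dd x\bigr)$, where $\int_\Omega F^2u^2\dd x\leq E_0^2\|u\|_{L^2}^2$.

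Integrating in time, all the $\int_s^t\!\int_\Omega\frac{\mu u_x^2}{v}$ terms are converted by \eqref{eq:heat-conv} into $C+C\int_s^t\!\int_\Omega\mathcal K_\beta(\theta)\eta$. The order of choices is:
\begin{itemize}
\item choose $\varepsilon$ small, so that $C\varepsilon$ times $\int_s^t\!\int_\Omega\mathcal K_\beta(\theta)\eta$ is absorbed into the coercive term;
\item choose $\delta_*$ small, so that $C\varepsilon^{-1}\delta_*^2<\frac18$ and the $\eta u^2$ error is absorbed.
\end{itemize}
All remaining contributions are $C(1+t-s)$, which gives \eqref{eq:Keta-int}. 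Finally, \eqref{eq:b-cap} follows from \eqref{eq:vtb} together with $\int_s^t\!\int_\Omega\eta\leq E_0(t-s)$.
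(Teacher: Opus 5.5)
Most of your argument works, but one step fails as written: the bound $\bigl|\int_\Omega Fv\,\mathbf{w}\cdot\mathbf{b}\dd x\bigr|\leq C\int_\Omega(|\mathbf{w}|^2+v|\mathbf{b}|^2)\dd x$.

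\begin{itemize}
\item Pointwise, $v|\mathbf{w}||\mathbf{b}|\leq C(|\mathbf{w}|^2+v|\mathbf{b}|^2)$ holds only when $v$ is bounded. For example, with $|\mathbf{w}|=1$ and $|\mathbf{b}|=v^{-1/2}$, the left side is $\sqrt v$ and the right side is $2C$.
\item At this stage you only have $v\leq M$ from \eqref{eq:boot}, so the best you get is a constant depending on $M$.
\item That is not allowed. The constant here feeds into $C_*$ in Proposition~\ref{p:v-up}, and $M_*$ is chosen only after $C_*$, so an $M$-dependent bound makes the bootstrap circular.
\end{itemize}

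The repair uses the coercive term $\frac12\int_\Omega\eta|\mathbf{w}|^2\dd x$ that you already moved to the left-hand side via \eqref{eq:res-parts-w}. Since $\sqrt v|\mathbf{b}|\leq\sqrt{2\eta}$,
\[
 |Fv\,\mathbf{w}\cdot\mathbf{b}|
 \leq\sqrt2\,|F|\sqrt v\,\sqrt\eta\,|\mathbf{w}|
 \leq\varepsilon\eta|\mathbf{w}|^2+C_\varepsilon vF^2 .
\]
Moreover $\int_\Omega vF^2\dd x=\int_\Omega \psi_x^2/v\dd x\leq2\mathcal H\leq C$. This is exactly how the paper treats this term.

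With that fix your proof is correct. It differs from the paper only in how you handle $\frac12\int_\Omega u_xF^2\dd x$.

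\begin{itemize}
\item The paper integrates this term by parts together with $\int_\Omega Fuu_x\dd x$ (identity \eqref{eq:constant-cancel}). This produces $-\int_\Omega uF\eta\dd x+\int_\Omega uF\psi\dd x$ at no dissipation cost, and the first piece is merged with the pressure term and controlled by \eqref{eq:resrem}.
\item You instead bound it by $\varepsilon\int_\Omega\mu u_x^2/v\dd x+C_\varepsilon\int_\Omega vF^2\dd x$, using $\norm F_{L^\infty}\leq E_0$ and $\mu\geq\frac12$, and then convert the dissipation through \eqref{eq:heat-conv}.
\item You bound $\int_\Omega Fu\eta\dd x$ crudely by $\varepsilon\int_\Omega\eta u^2\dd x+C_\varepsilon E_0^3$. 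This is legitimate, so the pairing you were searching for is not needed.
\end{itemize}

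Your route uses \eqref{eq:heat-conv} one more time, which costs nothing because the $(\mu-1)$ error needs it anyway. The paper's route keeps the constant-viscosity part exact and reserves \eqref{eq:heat-conv} for the genuinely new viscosity error.
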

\begin{proof}
Throughout this proof, unmarked spatial integrals are over $\Omega$.
Integration by parts, using \eqref{eq:Fx}, gives the exact
constant-viscosity cancellation
\begin{equation}\label{eq:constant-cancel}
\begin{aligned}
\int Fuu_x+\tfrac12\int u_xF^2
&=-\tfrac12\int\eta u^2-\int uF\eta
+\tfrac12\int\psi u^2+\int uF\psi.
\end{aligned}
\end{equation}
The boundary terms vanish by the boundary behavior of $F$ established
in Lemma~\ref{l:res}, together with the finite-energy bounds.  Combining
\eqref{eq:constant-cancel} with \eqref{eq:res-flux} and the identities
\eqref{eq:res-parts-k}, \eqref{eq:res-parts-w}, and
\eqref{eq:res-parts-b}, we obtain
\begin{align}\label{eq:res-id}
&\mathcal H_t+\int_\Omega\mathcal K_\beta(\theta)\eta\dd x
+\tfrac12\int_\Omega\eta(u^2+|\mathbf{w}|^2+|\mathbf{b}|^2)\dd x
+\int_\Omega\mathcal D_\beta\psi\dd x\notag\\
&\quad=\int_\Omega Fu\left(v-\theta-\tfrac12v|\mathbf{b}|^2-\eta\right)\dd x
+\int_\Omega Fv \mathbf{w}\cdot \mathbf{b}\dd x\notag\\
&\qquad+\int_\Omega\mathcal K_\beta(\theta)\psi\dd x
+\int_\Omega uF\psi\dd x\notag\\
&\qquad+\tfrac12\int_\Omega\psi(u^2+|\mathbf{w}|^2+|\mathbf{b}|^2)\dd x
+\int_\Omega(\mu-1)Fuu_x\dd x.
\end{align}
For the first three terms on the right-hand side of \eqref{eq:res-id},
\begin{align*}
\left|\int Fu\left(v-\theta-\frac12v|\mathbf{b}|^2-\eta\right)\dd x\right|
 &\leq \varepsilon\int_\Omega\eta u^2\dd x
 +C_\varepsilon\int_\Omega F^2(1+\eta)\dd x
 \leq \varepsilon\int_\Omega\eta u^2\dd x+C_\varepsilon,\\
\left|\int Fv\,\mathbf{w}\cdot \mathbf{b}\dd x\right|
&\leq \varepsilon\int_\Omega\eta|\mathbf{w}|^2\dd x
 +C_\varepsilon\int_\Omega vF^2\dd x
 \leq \varepsilon\int_\Omega\eta|\mathbf{w}|^2\dd x+C_\varepsilon,\\
\left|\int\mathcal K_\beta(\theta)\psi\dd x\right|
 &\leq\varepsilon\int_\Omega\mathcal K_\beta(\theta)\eta\dd x+C_\varepsilon.
\end{align*}
Here we used \eqref{eq:resrem}, $\eta\geq v|\mathbf{b}|^2/2$,
 \eqref{eq:F-inf}, \eqref{eq:resinf}, \eqref{eq:resst}, and
 \eqref{eq:Kpsi}. The two
remaining terms satisfy
\begin{equation*}
\left|\int uF\psi\dd x\right|
 +\frac12\left|\int\psi(u^2+|\mathbf{w}|^2+|\mathbf{b}|^2)\dd x\right|
\leq C
\end{equation*}
by \eqref{eq:F-inf}, \eqref{eq:resinf}, \eqref{eq:ent-id}, and
\eqref{eq:vlow}.
Choosing $\varepsilon$ so that the
$\int_\Omega\mathcal K_\beta(\theta)\eta\dd x$,
$\int_\Omega\eta u^2\dd x$, and
$\int_\Omega\eta|\mathbf{w}|^2\dd x$ contributions are absorbed into the left-hand side of
\eqref{eq:res-id}, and dropping the remaining nonnegative terms, we obtain
\begin{equation}\label{eq:HK-eta}
\mathcal H_t+c\int_\Omega\mathcal K_\beta(\theta)\eta\dd x
+c\int_\Omega\eta u^2\dd x
\leq C+\left|\int_\Omega(\mu-1)Fuu_x\dd x\right|.
\end{equation}
It remains to control $\int_\Omega(\mu-1)Fuu_x\dd x$ without using
$v_x$.  For
every $\varepsilon>0$, Young's inequality and \eqref{eq:mu-basic} give
\begin{align}\label{eq:new-error}
\left|\int_\Omega(\mu-1)Fuu_x\dd x\right|
&\leq\varepsilon\int_\Omega\frac{\mu u_x^2}{v}\dd x
+C\varepsilon^{-1}\delta_*^2
 \int_\Omega\frac v\mu F^2u^2\dd x\notag\\
&\leq\varepsilon\int_\Omega\frac{\mu u_x^2}{v}\dd x
+C\varepsilon^{-1}\delta_*^2
+C\varepsilon^{-1}\delta_*^2\int_\Omega\eta u^2\dd x.
\end{align}
Indeed, $v\leq C(1+\Phi(v))$, $\mu\geq1/2$, and
$\norm F_{L^\infty}\leq C$ imply
$\int_\Omega(v/\mu)F^2u^2\dd x
\leq C\left(1+\int_\Omega\eta u^2\dd x\right)$.  Thus no derivative of $\mu$, and in
particular no estimate of $v_x$, enters \eqref{eq:new-error}.
The estimate \eqref{eq:heat-conv} gives
\begin{equation*}
\int_s^t\!\int_\Omega\frac{\mu u_x^2}{v}\dd x\dd r
\leq C_h\left(1+
\int_s^t\!\int_\Omega\mathcal K_\beta(\theta)\eta\dd x\dd r\right).
\end{equation*}
Integrating \eqref{eq:HK-eta} over $(s,t)$, using
$0\leq\mathcal H\leq C$, and applying \eqref{eq:new-error} and
the estimate \eqref{eq:heat-conv}, we find
\begin{align*}
 c\int_s^t\!\int_\Omega\mathcal K_\beta(\theta)\eta\dd x\dd r
 +c\int_s^t\!\int_\Omega\eta u^2\dd x\dd r
&\leq C(1+t-s)\\
&\quad+\varepsilon C_h\int_s^t\!\int_\Omega\mathcal K_\beta(\theta)\eta\dd x\dd r
+C\varepsilon^{-1}\delta_*^2
\int_s^t\!\int_\Omega\eta u^2\dd x\dd r.
\end{align*}
Choose first $\varepsilon>0$ so that $\varepsilon C_h\leq c/2$ and
then $0<\delta_*\leq1/2$ so that
$C\varepsilon^{-1}\delta_*^2\leq c/2$. Both choices depend only on
the data and $\beta$, and are independent of $M$ and $\alpha$.
Absorbing the two time integrals on the right-hand side proves
\eqref{eq:Keta-int}.  Finally,
integrating \eqref{eq:vtb} over $\Omega\times(s,t)$ and using
$\int_\Omega\eta\dd x\leq E_0$ gives \eqref{eq:b-cap}.
\end{proof}

\begin{lemma}\label{l:bP}
For all $0\leq s<t\leq T$, every admissible integer $N$, and every
$x\in[N,N+1]$,
\begin{equation}
 \int_s^t\left|
 \frac12|\mathbf{b}(x,r)|^2-\frac12\int_N^{N+1}|\mathbf{b}(y,r)|^2\dd y
 \right|\dd r
 \leq C\sqrt{1+t-s}.                                \label{eq:bP}
\end{equation}
In fact,
\begin{equation}
 \int_s^t\!\int_N^{N+1}|\mathbf{b}||\mathbf{b}_y|\dd y\dd r
 \leq C\sqrt{1+t-s}. \label{eq:b-osc}
\end{equation}
\end{lemma}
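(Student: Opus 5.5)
First, \eqref{eq:bP} follows from \eqref{eq:b-osc}. For fixed $r$, continuity of $|\mathbf{b}(\cdot,r)|^2$ on $I_N$ and the mean value theorem for integrals give a point $y_N(r)\in I_N$ with $|\mathbf{b}(y_N(r),r)|^2=\int_N^{N+1}|\mathbf{b}(y,r)|^2\dd y$. Hence, for $x\in I_N$,
\[
 \left|\tfrac12|\mathbf{b}(x,r)|^2-\tfrac12\int_N^{N+1}|\mathbf{b}|^2\dd y\right|
 =\tfrac12\left|\int_{y_N(r)}^x\partial_y|\mathbf{b}|^2\dd y\right|
 \leq\int_N^{N+1}|\mathbf{b}||\mathbf{b}_y|\dd y .
\]
Integrating in $r$ over $(s,t)$ reduces \eqref{eq:bP} to \eqref{eq:b-osc}.

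To prove \eqref{eq:b-osc}, I split the integrand against the magnetic part of the entropy dissipation $\mathcal D_\beta$ in \eqref{eq:D}. By Cauchy--Schwarz in $(y,r)$,
\[
 \int_s^t\!\int_{I_N}|\mathbf{b}||\mathbf{b}_y|\dd y\dd r
 \leq\left(\int_s^t\!\int_{I_N}\frac{|\mathbf{b}_y|^2}{v\theta}\dd y\dd r\right)^{1/2}
 \left(\int_s^t\!\int_{I_N}v\theta|\mathbf{b}|^2\dd y\dd r\right)^{1/2}.
\]
The first factor is at most $E_0^{1/2}$ by \eqref{eq:ent-id}, since $|\mathbf{b}_x|^2/(v\theta)\leq\mathcal D_\beta$. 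The second factor is at most $C(1+t-s)^{1/2}$ by \eqref{eq:b-cap} of Proposition~\ref{p:bcap}, after enlarging $I_N$ to $\Omega$. Multiplying the two bounds gives \eqref{eq:b-osc}, with a constant independent of $N$, $M$, $T$ and $\alpha$.

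All the substantive work is already contained in Proposition~\ref{p:bcap}, which supplies the time-integrated weight $v\theta|\mathbf{b}|^2$ that pairs with the entropy dissipation. The only point needing care here is the pointwise-in-time step: I need $|\mathbf{b}|^2(\cdot,r)\in W^{1,1}(I_N)$ for almost every $r$, so that the fundamental theorem of calculus and the mean value choice of $y_N(r)$ are legitimate. This holds because $\mathbf{b}\in C([0,T];H^1)$ by \eqref{eq:finite}, which also makes the integrand measurable in $r$.
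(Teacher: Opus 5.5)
Your proposal is correct and matches the paper's proof: the fundamental theorem of calculus on $I_N$ bounds the pointwise oscillation of $\tfrac12|\mathbf b|^2$ by $\int_{I_N}|\mathbf b||\mathbf b_y|\dd y$, and Cauchy--Schwarz in space and time with the weight $v\theta$ pairs the entropy dissipation from \eqref{eq:ent-id} with the time-integrated bound \eqref{eq:b-cap}. Your mean-value choice of $y_N(r)$ and the regularity remark $\mathbf b\in C([0,T];H^1)$ just spell out details the paper leaves implicit.
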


\begin{proof}
Fix $r$.  The fundamental theorem of calculus on the unit interval
$[N,N+1]$ gives, for $x\in[N,N+1]$,
\begin{align*}
 &\left|\frac12|\mathbf{b}(x,r)|^2
 -\frac12\int_N^{N+1}|\mathbf{b}(y,r)|^2\dd y\right|
 \\ &\quad\leq\int_N^{N+1}|\mathbf{b}||\mathbf{b}_y|\dd y\\
 &\quad\leq\left(\int_N^{N+1}v\theta|\mathbf{b}|^2\dd y\right)^{\frac12}
 \left(\int_N^{N+1}\frac{|\mathbf{b}_y|^2}{v\theta}\dd y\right)^{\frac12}.
\end{align*}
Applying Cauchy--Schwarz also in time and then using \eqref{eq:b-cap}
and \eqref{eq:ent-id}, we obtain
\begin{align*}
& \int_s^t\!\int_N^{N+1}|\mathbf{b}||\mathbf{b}_y|\dd y\dd r
 \\ &\leq\left(\int_s^t\!\int_\Omega v\theta|\mathbf{b}|^2\dd x\dd r\right)^{\frac12}
 \left(\int_s^t\!\int_\Omega
 \frac{|\mathbf{b}_x|^2}{v\theta}\dd x\dd r\right)^{\frac12}\\
 &\leq C\sqrt{1+t-s}.
\end{align*}
This proves \eqref{eq:b-osc}, and its pointwise precursor gives
\eqref{eq:bP}.
\end{proof}

\begin{proposition}\label{p:v-up}
There exists a constant $C$, independent of $T$, such that
\begin{equation}
 v(x,t)\leq C,\qquad x\in\Omega,\quad 0\leq t\leq T.
 \label{eq:v-up}
\end{equation}
\end{proposition}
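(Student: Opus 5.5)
We work under the bootstrap assumptions \eqref{eq:boot}--\eqref{eq:alpha-small}, with $\delta_*$ fixed as in Proposition~\ref{p:bcap}. The argument is again based on the normalized quantity $R_N(x,t)=\log(Z_N(x,t)/\overline Z_N(t))$ from Lemma~\ref{l:Kaz}, but now the goal is an upper bound on $R_N$. By \eqref{eq:Zbounds} and \eqref{eq:Zbar-bounds}, $v\leq Ce^{R_N}$, so it suffices to show $R_N(t)\leq C$ with $C$ independent of $N$, $x$, $M$, $T$ and $\alpha$.

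The first step splits the pressure in \eqref{eq:Z-evol} as $P_*=\theta/v+\frac12|\mathbf b|^2$ and subtracts the cell average of $\frac12|\mathbf b|^2$. Using $Z_N\leq Cv$ and $\overline Z_N\geq c$,
\[
\frac1{\overline Z_N}\int_{I_N}Z_N\tfrac12|\mathbf b|^2\dd y
\geq c\int_{I_N}v|\mathbf b|^2\dd y-\text{(error)}.
\]
This is awkward because it is not the plain average. Instead, write $\frac1{\overline Z_N}\int Z_N\frac12|\mathbf b|^2 = \frac12|\mathbf b(x)|^2-\frac1{\overline Z_N}\int Z_N(\frac12|\mathbf b(x)|^2-\frac12|\mathbf b(y)|^2)\dd y$. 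Since $Z_N/\overline Z_N$ is a probability weight, the difference term is bounded by $\sup_{y,x\in I_N}|\frac12|\mathbf b(x)|^2-\frac12|\mathbf b(y)|^2|\leq 2\int_{I_N}|\mathbf b||\mathbf b_y|\dd y$. This gives, for a.e. $t$,
\[
R_N'(t)\leq \frac{\theta(x,t)}{v(x,t)}-\frac1{\overline Z_N}\int_{I_N}Z_N\frac{\theta}{v}\dd y+2\int_{I_N}|\mathbf b||\mathbf b_y|\dd y .
\]
The last term has time integral $\leq C\sqrt{1+t-s}$ by \eqref{eq:b-osc}.

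The second step gets a negative drift at good times. For $t\notin\mathcal B$ (with $\mathcal B$ as in Proposition~\ref{p:vlow}), \eqref{eq:good-two} gives $c_\theta\leq\theta\leq C_\theta$ on $\Omega$. Then
\[
\frac1{\overline Z_N}\int_{I_N}Z_N\frac\theta v\dd y\geq \frac{c\,c_\theta}{\overline Z_N}\geq c_1>0,
\]
since $Z_N/v\geq c$ and $\overline Z_N\leq C$. Meanwhile $\theta/v\leq C_\theta e^{-R_N}\cdot C$. So there is $A$, independent of everything, such that $R_N(t)>A$ and $t\notin\mathcal B$ imply $\theta/v-\frac1{\overline Z_N}\int Z_N\theta/v\leq -c_1/2$.

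At bad times, or when $R_N\leq A$, we have $\theta/v\leq C\theta$ because of the lower bound \eqref{eq:vlow}. We cannot use a pointwise bound on $\theta$ here, since no upper bound for $\theta$ is available. Instead we estimate $\int_{\mathcal B}\sup_x\theta\,\dd t$. Lemma~\ref{l:osc}-type oscillation arguments give $\sup_{I_N}\theta\leq C(1+D_\theta(t))$ (for $\beta>0$ via $\theta^{\beta/2}$, for $\beta=0$ via $\log\theta$ together with a bound of the form $\theta\leq C e^{CD_\theta^{1/2}}$). That would be bad, so I would instead try to control $\int_s^t\theta/v$ through \eqref{eq:Keta-int}. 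This works if we use $\theta\leq C(1+\mathcal K_\beta(\theta))$ pointwise and bound $\sup_{I_N}\mathcal K_\beta(\theta)$ by $\int_{I_N}(\mathcal K_\beta(\theta)+|\mathcal K_\beta(\theta)_x|)$. The derivative is handled by Cauchy--Schwarz against $D_\theta$ with weight $\theta^\beta(\theta-1)^2 v\lesssim \mathcal K_\beta(\theta)\eta$-type quantities. The result is $\int_s^t\sup_{I_N}\theta\,\dd r\leq C(1+t-s)^{1/2}(1+t-s)^{1/2}$-type bounds restricted to $\mathcal B$. I expect this bad-time control of $\theta/v$, uniform in $M$, to be \textbf{the main obstacle}. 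The fallback is: on $\mathcal B$, with bounded measure, $\int_{\mathcal B}\sup\theta\,\dd r\leq C$ by combining \eqref{eq:bad-measure} with the pointwise bound $\sup_{I_N}\theta\leq C(1+D_\theta)$ when $\beta>0$.

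The final step is the comparison on excursion intervals. Set $m_N=\max\{A,R_N(0)\}\leq C$, and take a connected component $(t_-,t_+)$ of $\{R_N>m_N\}$. For $t$ in it,
\[
R_N(t)\leq m_N-\tfrac{c_1}{2}|(t_-,t)\setminus\mathcal B|+C\int_{(t_-,t)\cap\mathcal B}(1+\sup\theta)\,\dd r+C\sqrt{1+t-t_-}.
\]
Since $|(t_-,t)\setminus\mathcal B|\geq t-t_--|\mathcal B|$, this gives
\[
R_N(t)\leq C-\tfrac{c_1}{2}(t-t_-)+C\sqrt{1+t-t_-},
\]
and the right-hand side is bounded uniformly in $t-t_-$. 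Hence $R_N\leq C$, so $v\leq C$ on $I_N$, and the bound holds on all of $\Omega$ since $N$ is arbitrary.
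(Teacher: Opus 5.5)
Your good-time drift, your handling of the magnetic pressure through the probability weight $Z_N/\overline Z_N$ and \eqref{eq:b-osc}, and your excursion-interval integration are all sound. The step you flag as the main obstacle, however, is a genuine gap, and neither of your proposed fixes closes it.

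At a fixed $x$ you bound $\theta/v\le C\theta$ and then need $\int_{\mathcal B}\sup_{I_N}\theta\,\dd t\le C$. The oscillation estimate for $\theta^{\beta/2}$ only gives $\theta^{\beta/2}\le C+CD_\theta^{1/2}$, i.e. $\sup_{I_N}\theta\le C(1+D_\theta)^{1/\beta}$. This is dominated by $C(1+D_\theta)$ only when $\beta\ge1$. For $0<\beta<1$ the power $1/\beta>1$ is not controlled by \eqref{eq:bad-measure}, and for $\beta=0$ the bound is exponential, as you note.

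The route through \eqref{eq:Keta-int} also fails, for two reasons:
\begin{itemize}
\item The comparison $v\theta^\beta(\theta-1)^2\lesssim\mathcal K_\beta(\theta)\eta$ is false. With $u=\mathbf{w}=\mathbf{b}=0$ and $v,\theta$ large, the left side is of order $v\theta^{\beta+2}$ while the right side is of order $\theta^{\beta+1}(v+\theta)$.
\item For $\beta>0$ the zeroth-order term $\int_{I_N}\mathcal K_\beta(\theta)\dd x$ has no pointwise-in-time bound, so you cannot exploit $|\mathcal B|\le C$. Its time integral is only $O(1+t-s)$, and linear growth with an uncontrolled constant cannot be beaten by the drift $-\frac{c_1}{2}(t-s)$.
\end{itemize}
As written, your argument proves the proposition only for $\beta\ge1$, whereas it is needed for every $\beta\ge0$.

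The missing idea, which is what the paper uses, is to evaluate the evolution law at a maximizer of $Z_N(\cdot,t)$ on $I_N$. There the large specific volume compensates the possibly large temperature.
\begin{itemize}
\item Set $L_N(t)=\max_{x\in I_N}R_N(x,t)\ge R_N(x,t)$. The $AC([0,T];C(I_N))$ regularity in Lemma~\ref{l:Kaz} makes $L_N$ absolutely continuous, with $L_N'(t)\le\max_{x\in\operatorname{Argmax}R_N(\cdot,t)}\partial_tR_N(x,t)$.
\item At a maximizer $x_+$, \eqref{eq:Zbounds} gives $\max_{I_N}v\le Cv(x_+,t)$.
\item Apply the fundamental theorem of calculus to $\sqrt\theta$ from a point where $\theta$ equals its cell average. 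Use $\theta\ge c$ from Proposition~\ref{p:tlow} to dominate $\theta_x^2/(v\theta^2)$ by a multiple of $\theta^{\beta-2}\theta_x^2/v$. This yields $\theta(x_+,t)\le C+CD_\theta(t)\max_{I_N}v$.
\item Dividing by $v(x_+,t)\ge c$ from \eqref{eq:vlow} gives, for every $\beta\ge0$,
\[
 \frac{\theta(x_+,t)}{v(x_+,t)}\le C\bigl(1+D_\theta(t)\bigr).
\]
\end{itemize}
The factor $\max_{I_N}v$ that makes $\theta$ itself uncontrollable is cancelled exactly by the denominator. This bound is integrable over $\mathcal B$ by \eqref{eq:bad-measure}. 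Your excursion argument then closes with $L_N$ in place of $R_N(x,\cdot)$.
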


\begin{proof}
 Let
 \[
 \mathcal B=\{t\in[0,T]:D_\theta(t)>\delta\}.
 \]
 By \eqref{eq:ent-id},
 \begin{equation}\label{eq:bad-measure-up}
 |\mathcal B|+\int_{\mathcal B}D_\theta(t)\dd t\leq C.
 \end{equation}
 We show that the logarithmic  oscillation of $Z_N$ on $I_N$ is uniformly bounded.
For an admissible $N$, let
\[
 L_N(t)=\max_{x\in [N,N+1]}\log Z_N(x,t)-\log\overline Z_N(t),
\]
and choose $x_+(t)\in[N,N+1]$ at which the maximum is attained.  We
estimate the magnetic-pressure contribution through the normalized
identity \eqref{eq:Z-evol}.
By \eqref{eq:vlow}, \eqref{eq:fM}, and \eqref{eq:Zbounds},
\begin{equation}\label{eq:Zmax-v}
 \max_{[N,N+1]}v\leq C v(x_+,t).
\end{equation}
Moreover, \eqref{eq:vlow} gives
\begin{equation}\label{eq:v-at-max}
 v(x_+,t)\geq c.
\end{equation}
By continuity and \eqref{eq:cell-theta}, there is $x_0\in[N,N+1]$ such that
$\theta(x_0,t)$ equals its average on $I_N$.  Applying the fundamental
theorem of calculus to $\sqrt\theta$ between $x_0$ and $x_+$ gives
\begin{align*}
 \sqrt{\theta(x_+,t)}
 &\leq C+\frac12\int_N^{N+1}\frac{|\theta_x|}{\sqrt\theta}\dd x\\
 &\leq C+C\left(\int_N^{N+1}\frac{\theta_x^2}{v\theta^2}\dd x\right)^{\frac12}
             \left(\int_N^{N+1}v\theta\dd x\right)^{\frac12}\\
 &\leq C+C\left\{D_\theta(t)\max_{[N,N+1]}v\right\}^{\frac12}.
\end{align*}
Here \eqref{eq:cell-theta} controls $\int_{N}^{N+1}v\theta$ by
$C\max_{[N,N+1]}v$, and \eqref{eq:t-low} allows us to replace
$\theta_x^2/(v\theta^2)$ by a constant multiple of the integrand in
$D_\theta$.  Hence \eqref{eq:Zmax-v} implies
\begin{equation}
 \frac{\theta(x_+,t)}{v(x_+,t)}\leq C(1+D_\theta(t)).
 \label{eq:max-pressure}
\end{equation}

For almost every $t\notin\mathcal B$, the two-sided temperature bound
\eqref{eq:good-two}, \eqref{eq:Zbounds}, and \eqref{eq:Zbar-bounds} give
\begin{equation}\label{eq:pressure-average-lower}
 \frac1{\overline Z_N}\int_N^{N+1}\frac{Z_N\theta}{v}\dd y
 \geq c_0>0.
\end{equation}
The definition of $L_N$ and \eqref{eq:Zbounds} also give
\begin{equation}\label{eq:pressure-at-max}
 \frac{\theta(x_+,t)}{v(x_+,t)}\leq C\mathrm e^{-L_N(t)}.
\end{equation}
Choose a fixed $L_*>0$ so that
$C\mathrm e^{-L_N(t)}$ is at
most $c_0/2$ whenever $L_N>L_*$.  Moreover, the
fundamental theorem of calculus and \eqref{eq:Zdef} give, for every
$x\in[N,N+1]$,
\[
 \frac12|\mathbf{b}(x,t)|^2
 -\frac1{2\overline Z_N(t)}\int_N^{N+1}Z_N(y,t)|\mathbf{b}(y,t)|^2\dd y
 \leq\int_N^{N+1}|\mathbf{b}||\mathbf{b}_y|\dd y.
\]
By the final assertion of Lemma~\ref{l:Kaz}, the map
$t\mapsto\log(Z_N/\overline Z_N)$ is absolutely continuous with values
in $C([N,N+1])$.  Therefore $L_N$ is absolutely continuous on $[0,T]$,
and, for almost every $t$,
\begin{equation}\label{eq:max-envelope}
 L_N'(t)\leq
 \max_{x\in\operatorname{Argmax}_{[N,N+1]}R_N(\cdot,t)}
 \partial_tR_N(x,t).
\end{equation}
Applying \eqref{eq:Z-evol} in \eqref{eq:max-envelope}, we
obtain, whenever $L_N>L_*$,
\begin{equation}\label{eq:LN-differential}
 L_N'(t)\leq
 \begin{cases}
 -c+\displaystyle\int_N^{N+1}|\mathbf{b}||\mathbf{b}_y|\dd y,&t\notin\mathcal B,\\[2mm]
 C(1+D_\theta(t))+\displaystyle\int_N^{N+1}|\mathbf{b}||\mathbf{b}_y|\dd y,&t\in\mathcal B.
 \end{cases}
\end{equation}
Here the first line uses the strict pressure gap $c_0/2$, while the
second uses \eqref{eq:max-pressure} and the nonnegativity of the
weighted pressure average.

If $L_N(t)>\max\{L_*,L_N(0)\}$, let $s$ be the left
endpoint of the corresponding connected component.  The absolute
continuity just established gives
$L_N(s)=\max\{L_*,L_N(0)\}\leq C$.  Integrating the
differential inequality over
$(s,t)\setminus\mathcal B$ and $(s,t)\cap\mathcal B$, and using
\eqref{eq:bad-measure-up}, we obtain
\begin{align}
 L_N(t)&\leq L_N(s)-c|(s,t)\setminus\mathcal B|
 +C\int_{(s,t)\cap\mathcal B}(1+D_\theta(r))\dd r\notag\\
 &\quad+\int_s^t\!\int_N^{N+1}|\mathbf{b}||\mathbf{b}_y|\dd y\dd r\notag\\
 &\leq C-c(t-s)
 +\int_s^t\!\int_N^{N+1}|\mathbf{b}||\mathbf{b}_y|\dd y\dd r.
\end{align}
The oscillation estimate \eqref{eq:b-osc} now yields
\[
 L_N(t)\leq C+\sup_{\tau\geq0}
 \{-c\tau+C\sqrt{1+\tau}\}\leq C.
\]
Outside these components, $L_N$ is bounded by the defining threshold;
also $L_N(0)\leq C$ by the initial bounds. Thus $L_N\leq C$ uniformly
in $N$ and $T$. By \eqref{eq:Zbar-bounds}, $\overline Z_N\leq C$, so
$Z_N\leq C$, and \eqref{eq:Zmax-v} gives \eqref{eq:v-up}.
\end{proof}

The estimates in Propositions~\ref{p:vlow}, \ref{p:tlow},
\ref{p:bcap}, and \ref{p:v-up} are independent of the temporary bootstrap
size $M$.  We now fix the parameters in the order required by the
absorption argument.  First choose $\delta_*$ as in
Proposition~\ref{p:bcap}.  Under \eqref{eq:boot}--\eqref{eq:alpha-small},
let $c_*,C_*>0$ be the bounds furnished by
Propositions~\ref{p:vlow} and \ref{p:v-up}, respectively, and use the
positive lower bound for $\theta$ furnished by Proposition~\ref{p:tlow}.
These constants depend only on the data and $\beta$, and not on
$M$, $T$, or $\alpha$.

Choose $M_*>e$ sufficiently large that
\[
[\inf v_0,\sup v_0]\subset(2/M_*,M_*/2),\qquad
c_*>2/M_*,\qquad C_*<M_*/2.
\]
Set $L_*=\log M_*$.  To ensure both smallness conditions in
\eqref{eq:alpha-small}, define
\begin{equation}\label{eq:alpha-choice}
\alpha_0=\min\left\{1,\frac{\delta_*}{2L_*},
\frac1{2L_*^2}\right\}>0.
\end{equation}
For $0\leq\alpha\leq\alpha_0$, the initial bounds and continuity give
$M_*^{-1}<v<M_*$ on a nonempty initial interval.  On every interval on
which this bootstrap holds, Propositions~\ref{p:vlow} and \ref{p:v-up}
improve it to $2/M_*<v<M_*/2$.  Hence $v$ cannot reach either bootstrap
boundary at a finite first exit time before $T_*$.  Consequently, on
the maximal interval of existence,
\begin{subequations}\label{eq:state}
\begin{equation}\label{eq:state-v}
 c\leq v\leq C.
\end{equation}
\begin{equation}\label{eq:state-theta}
 \theta\geq c.
\end{equation}
\begin{equation}\label{eq:state-mu}
 c\leq\mu\leq C.
\end{equation}
\begin{equation}\label{eq:state-mu-derivative}
 |\mu'(v)|\leq C.
\end{equation}
\end{subequations}
The estimate for $\mu'$ follows from
$\mu'(v)=-\alpha v^{-\alpha-1}$, $0\leq\alpha\leq1$, and the improved
state interval.  Since the bootstrap uses no derivative norm, the
higher-order estimates needed for continuation may now be derived from
\eqref{eq:state}.

The primitive comparison is uniform whenever
\eqref{eq:alpha-small} holds. The lower bounds for $v$ and $\theta$ determine the
constants in the elliptic estimate; Proposition~\ref{p:bcap} then fixes
$\delta_*$, and Proposition~\ref{p:v-up} fixes $C_*$.  Only afterward
do we choose $M_*$ and $\alpha_0$.  In particular, the viscosity error
is absorbed through the integrals in \eqref{eq:heat-conv},
\eqref{eq:HK-eta}, and \eqref{eq:new-error} before any estimate of
$v_x$ is used.

\section{Uniform estimates for higher derivatives}\label{sec:high}
In view of \eqref{eq:state}, all diffusion coefficients are bounded
away from zero. We first estimate $\mathbf{w}_x/v$ and $\mathbf{b}_x/v$. A
separate superlevel estimate is needed when $\beta=0$; it then yields
the bound for $v_x$. The estimates for $u$ and $\theta$ complete the
higher-order bounds.
\begin{proposition}\label{p:cap}
For every fixed $\beta\geq0$,
\begin{equation}
\sup_{0\leq t\leq T}\int_\Omega
 \{u^2+|\mathbf{w}|^2+ |\mathbf{b}|^2 \}\dd x+ \int_0^T\!\int_\Omega
 \{\mu u_x^2+|\mathbf{w}_x|^2+|\mathbf{b}_x|^2\}\dd x\dd t\leq C.
                                                               \label{eq:cap}
\end{equation}
If $\beta>0$, then also
\begin{align}
 &\int_0^T\!\int_\Omega
 \frac{\theta_x^2}{\theta}\dd x\dd t\leq C.
                                                               \label{eq:cappos}
\end{align}
\end{proposition}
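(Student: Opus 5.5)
The first bound in \eqref{eq:cap} is immediate: the entropy identity \eqref{eq:ent-id} controls $\int_\Omega\eta\dd x\leq E_0$, and $\eta\geq\frac12(u^2+|\mathbf{w}|^2+v|\mathbf{b}|^2)$. The state bound $v\geq c$ from \eqref{eq:state-v} turns $v|\mathbf{b}|^2$ into $c|\mathbf{b}|^2$. The substance is the dissipation bound without the weight $1/\theta$; this is necessary because no upper bound for $\theta$ is available yet.

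I will split $\Omega$ at each time into $\{\theta\leq 4\}$ and $\{\theta>4\}$. On $\{\theta\leq4\}$ I use
\[
\frac{\mu u_x^2+|\mathbf{w}_x|^2+|\mathbf{b}_x|^2}{v}\leq 4\,\frac{\mu u_x^2+|\mathbf{w}_x|^2+|\mathbf{b}_x|^2}{v\theta}.
\]
Since $v\leq C$ and $\mu\geq c$ by \eqref{eq:state}, this part is bounded by $\mathcal D_\beta$ and hence by $E_0$ via \eqref{eq:ent-id}. On $\{\theta>4\}$ I repeat the $\chi(\theta)=(1-2/\theta)_+$ test used for \eqref{eq:heat-conv}, now keeping the magnetic and transverse dissipation as well. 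The terms $\frac{\dd}{\dd t}\int H(\theta)$ (with $0\leq H\leq C\Phi$) and $2\int_{\{\theta>2\}}\theta^{\beta-2}\theta_x^2/v$ are integrable in time by \eqref{eq:ent-id}. After absorbing half of $\mu u_x^2\chi/v$, everything reduces to the time integral of $\int\theta^2\chi^2/v\dd x$. This is the point where the earlier argument used $\int\mathcal K_\beta(\theta)\eta$, which only grows like $1+t$; here I need a $T$-independent bound instead. Since $\theta^2\chi^2\leq C(\theta-2)_+^2$, the task is to bound $\int_0^T\!\int_\Omega(\theta-2)_+^2\dd x\dd t$.

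For that I use that $\{\theta>2\}$ has measure at most $C$ uniformly in time, by \eqref{eq:hot}. Then
\[
\int_\Omega(\theta-2)_+^2\dd x\leq C\,\|(\theta-2)_+\|_{L^\infty}^2 .
\]
Because $(\theta-2)_+$ vanishes somewhere in every cell meeting $\{\theta\le 2\}$, or else is controlled by the cell average \eqref{eq:cell-theta}, I get
\[
\|(\theta-2)_+\|_{L^\infty}\leq \int_{\{\theta>2\}}|\theta_x|\dd x\leq D_\theta^{1/2}\Bigl(\int_{\{\theta>2\}}v\theta^{2-\beta}\dd x\Bigr)^{1/2}.
\]
For $\beta\geq1$, the last factor is bounded by $C$ times $\int_{\{\theta>2\}}\theta\dd x$, which is at most $C$ by \eqref{eq:hot}. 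This yields a contribution $CD_\theta$, which is time-integrable. For $0\leq\beta<1$, the factor is at most $C\|\theta\|_{L^\infty(\{\theta>2\})}^{1-\beta}$. This gives $\|\theta\|_\infty^{1+\beta}\leq C(1+D_\theta)$, which is only integrable after interpolation. \textbf{This is the step I expect to be the main obstacle.}

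The first fix I would try is the genuine energy identity. I multiply \eqref{eq:mom}, \eqref{eq:trans}, \eqref{eq:ind} by $u,\mathbf{w},\mathbf{b}$ and \eqref{eq:mass} by $1-1/v$, using $(1/v-1)v_t=-\Phi(v)_t$. The magnetic pressure and induction terms then cancel, leaving
\[
\frac{\dd}{\dd t}\int_\Omega\Bigl(\tfrac12(u^2+|\mathbf{w}|^2+v|\mathbf{b}|^2)+\Phi(v)\Bigr)\dd x+\int_\Omega\frac{\mu u_x^2+|\mathbf{w}_x|^2+|\mathbf{b}_x|^2}{v}\dd x=\int_\Omega\frac{(\theta-1)u_x}{v}\dd x.
\]
On $\{\theta\leq4\}$, I estimate the right side by Young's inequality, using $\mu u_x^2/(v\theta)$ together with $(\theta-1)^2\theta/v$. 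I then still need a time-integrable bound on $(\theta-1)^2$ near $\theta=1$. For that I plan to combine the good/bad-time decomposition of Lemma~\ref{l:osc} with the $\theta$-superlevel argument (the additional truncation when $\beta=0$ mentioned in the section introduction). On bad times \eqref{eq:bad-measure} absorbs the contribution. On good times $\theta$ is bounded, and I exploit $\int_{\{\theta>2\}}(\theta-2)_+^2\leq C D_\theta$ as above.

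For \eqref{eq:cappos} with $\beta>0$, I use the test function $\phi(\theta)=(1-(2/\theta)^\beta)_+$, which is bounded and satisfies $\theta^\beta\phi'(\theta)=\beta 2^\beta\theta^{-1}$ on $\{\theta>2\}$. Testing \eqref{eq:temp} with $\phi(\theta)$ produces $c\int_{\{\theta>2\}}\theta^{-1}\theta_x^2/v$ on the left. The right-hand side consists of $\frac{\dd}{\dd t}\int G(\theta)$ with $0\le G\le C\Phi$, the dissipation already bounded in \eqref{eq:cap}, and the same cross term $\int\theta u_x\phi/v$ treated above. On $\{\theta\le2\}$, the bound $\theta\geq c$ gives $\theta^{-1}\leq C\theta^{\beta-2}$, so that part follows from \eqref{eq:ent-id}. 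Together with $v\leq C$ this proves \eqref{eq:cappos}.
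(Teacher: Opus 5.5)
Your argument closes for $\beta\ge1$. There $\norm{(\theta-2)_+}_{L^\infty}^2\le CD_\theta$, provided that in the $\chi$-test you absorb into the unweighted $\int_\Omega\mu u_x^2/v\,\dd x$, using the entropy on $\{\theta\le4\}$ and $\chi\ge\frac12$ on $\{\theta>4\}$. Absorbing into $\int_\Omega\mu u_x^2\chi/v\,\dd x$, as you wrote, leaves $\theta^2\chi=\theta(\theta-2)_+$ rather than $(\theta-2)_+^2$, and your bounds control its time integral only by $C\sqrt T$.

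The statement, however, is for every $\beta\ge0$. For $0\le\beta<1$, including $\beta=0$, there is a genuine gap exactly where you flag it. The weight $\theta^{\beta-2}$ in $D_\theta$ is too weak at high temperature: it gives only $\norm{(\theta-2)_+}_{L^\infty}^2\lesssim D_\theta+D_\theta^{2/(1+\beta)}$, which is superlinear in $D_\theta$, whereas \eqref{eq:ent-id} controls only $\int_0^TD_\theta\,\dd t$. Your proposed fix does not repair this, for four reasons.
\begin{itemize}
\item The mechanical energy identity puts $\int_\Omega(\theta-1)u_x/v\,\dd x$ on the right over all of $\Omega$. On $\{\theta\le4\}$, Young's inequality then leaves $\int_0^T\!\int_{\{\theta\le4\}}(\theta-1)^2\,\dd x\,\dd t$, which the entropy bounds only by $CT$. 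Lemma~\ref{l:osc} gives oscillation control at good times, not smallness of $\theta-1$.
\item At bad times, \eqref{eq:bad-measure} bounds $|\mathcal B|+\int_{\mathcal B}D_\theta\,\dd t$ but not $\int_{\mathcal B}D_\theta^{2/(1+\beta)}\,\dd t$, and no upper bound for $\theta$ is available yet.
\item The $\beta=0$ superlevel estimate you allude to (Proposition~\ref{p:hot0}) itself uses \eqref{eq:cap} and Lemma~\ref{l:flux}, so invoking it would be circular.
\item Your proof of \eqref{eq:cappos} for $0<\beta<1$, as written, needs the same missing bound on $\int_0^T\!\int_\Omega(\theta-2)_+^2$.
\end{itemize}

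The missing idea is to couple the temperature equation with the mechanical energy. This makes the pressure work cancel exactly at moderate temperatures and turns the heat flux into a genuine dissipation at high temperatures.

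In your $\chi$-test, the term $\int_\Omega\chi'(\theta)\theta^\beta\theta_x^2/v\,\dd x$ sits on the same side as the viscous heating you are trying to bound. It must therefore be paid for by \eqref{eq:ent-id}. Any test function used this way must have derivative $\lesssim\theta^{-2}$, which is precisely why you only ever see the weight $\theta^{\beta-2}$.

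The paper instead adds twice the mechanical energy identity (its \eqref{eq:mech-v}) to $g_\beta(\theta)$ times the temperature equation. On $\{\theta\le K\}$ it takes $g_\beta=2(1-\theta^{-1})$, so that $\{2(\theta-1)-g_\beta\theta\}u_x/v\equiv0$ there. On $\{\theta>K\}$, $g_\beta$ is bounded and nondecreasing with $2-g_\beta\ge K^{-1}$. Both $(2-g_\beta)(\mu u_x^2+|\mathbf w_x|^2+|\mathbf b_x|^2)/v$ and $g_\beta'\theta^\beta\theta_x^2/v$ then appear with positive sign. The latter contains $\tau^{\beta-3/2}\tau_x^2/v$, where $\tau=\theta/K$.

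The only cross term left is at most $C_\beta K\int_\Omega(\tau-1)_+^2\,\dd x$. It is controlled by two facts:
\begin{itemize}
\item the square-root bound $\norm{(\sqrt\tau-1)_+}_{L^\infty}^2\le C\int_{\{\theta>K\}}\tau^{1/2}\,\dd x\int_{\{\theta>K\}}\tau^{-3/2}\tau_x^2\,\dd x$;
\item $\int_{\{\theta>K\}}\tau\,\dd x\le C/K$, from \eqref{eq:hot}.
\end{itemize}
Together these bound the cross term by $CK^{-1}$ times the new dissipation, so it is absorbed for $K$ large, for every $\beta\ge0$. The $\beta$-independent weight $\tau^{-3/2}$ replaces your $\theta^{\beta-2}$. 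The $\beta\tau^{-\beta}$ part of $g_\beta$ yields \eqref{eq:cappos} at the same time.
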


\begin{proof}
Decreasing the constant $c$ in \eqref{eq:state}, if necessary, we may
assume that $c\leq1$. Fix $K\geq4$, to be chosen below, and define
on $[c,\infty)$
\begin{equation}
 g_\beta(\xi)=
 \begin{cases}
 \displaystyle 2(1-\xi^{-1}),&c\leq\xi\leq K,\\[2mm]
 \displaystyle
 2-\frac1K-
 \frac{\beta\left(\frac{\xi}{K}\right)^{-\beta}
 +\left(\frac{\xi}{K}\right)^{-\frac12}}
 {K(1+\beta)},&\xi>K.
 \end{cases}                                         \label{eq:capdef}
\end{equation}
Extend $g_\beta$ to $(0,c)$ as a bounded nondecreasing Lipschitz
function, matching its value at $c$. Thus $g_\beta$ is bounded,
continuous, and nondecreasing on $(0,\infty)$, with $g_\beta(1)=0$. Set
\begin{equation}\label{eq:Fbeta-def}
 F_\beta(\xi)=\int_1^\xi g_\beta(s)\dd s.
\end{equation}
Define
\begin{equation}\label{eq:zeta-def}
 \zeta_\beta=2-g_\beta.
\end{equation}
For every $\xi\geq c$,
\begin{equation}\label{eq:Fbeta-bound}
 0\leq F_\beta(\xi)\leq C\Phi(\xi).
\end{equation}
Moreover,
\begin{equation}\label{eq:lj03}
 \zeta_\beta(\xi)\geq K^{-1}.
\end{equation}

The formal multiplications and integrations by parts in this proof are justified by the approximation argument in Appendix~\ref{app:energy}.

Multiplying \eqref{eq:mom}, \eqref{eq:trans}, and \eqref{eq:ind} by
$u$, $\mathbf{w}$, and $\mathbf{b}$, respectively, and adding the resulting identities
to \eqref{eq:mass} multiplied by $1-v^{-1}$ give
\begin{equation}
 \frac{\dd}{\dd t}\int_\Omega\left\{
 \frac{u^2+|\mathbf{w}|^2+v|\mathbf{b}|^2}{2}+\Phi(v)\right\}\dd x
 +\int_\Omega\frac{\mu u_x^2+|\mathbf{w}_x|^2+|\mathbf{b}_x|^2}{v}\dd x
 =\int_\Omega\frac{(\theta-1)u_x}{v}\dd x.           \label{eq:mech-v}
\end{equation}
Set $\tau=\theta/K$. Multiplying \eqref{eq:temp} by
$g_\beta(\theta)$, adding twice \eqref{eq:mech-v}, and integrating by
parts, we obtain
\begin{align}
 &\frac{\dd}{\dd t}\int_\Omega
 \{u^2+|\mathbf{w}|^2+v|\mathbf{b}|^2+2\Phi(v)+F_\beta(\theta)\}\dd x\notag\\
 &\quad+\int_\Omega\frac{
 \zeta_\beta(\theta)(\mu u_x^2+|\mathbf{w}_x|^2+|\mathbf{b}_x|^2)
 +g_\beta'(\theta)\theta^\beta\theta_x^2}{v}\dd x\notag\\
 &=\int_\Omega
 \frac{\{2(\theta-1)-g_\beta(\theta)\theta\}u_x}{v}\dd x\notag\\&\leq\frac12\int_\Omega
 \frac{\zeta_\beta(\theta)\mu u_x^2}{v}\dd x
 +C_\beta K\int_\Omega(\tau-1)_+^2 \dd x,
                                                               \label{eq:cap-id}
\end{align}
where the boundary terms vanish by \eqref{eq:bc-C}--\eqref{eq:bc-D}:
at $x=0$, $\theta_x=0$ and $u=\mathbf{w}=0$, while the magnetic term vanishes
under either condition on $\mathbf{b}$. The estimate of the right-hand side in
\eqref{eq:cap-id} follows from
\eqref{eq:Fbeta-bound}, \eqref{eq:lj03}, and
$\left|2(\theta-1)-g_\beta(\theta)\theta\right|
\leq C_\beta(\tau-1)_+$ for $\theta\geq c$.
Until $K$ is fixed below, the generic constants are independent of
$K$ and time and have the dependence specified in
Section~\ref{sec:lower}.

The superlevel term in \eqref{eq:cap-id} satisfies
\begin{align}
 C_\beta K\int_\Omega(\tau-1)_+^2 \dd x
 &\leq C_\beta K
 \norm{(\sqrt\tau-1)_+}_{L^\infty(\Omega)}^2
 \int_{\{\theta>K\}} \tau  \dd x\notag\\&\leq C_\beta
 \norm{(\sqrt\tau-1)_+}_{L^\infty(\Omega)}^2
 \notag\\ &\leq C_\beta\left(\int_{\{\theta>K\}} \tau^{\frac14}
  |(\tau^{\frac14})_x|  \dd x\right)^2\notag\\ &\leq C_\beta \int_{\{\theta>K\}} \tau^{\frac12}\dd x \int_{\{\theta>K\}}
 |(\tau^{\frac14})_x|^2 \dd x\notag\\ &\leq C_\beta \int_{\{\theta>4\}} \tau\dd x \int_{\{\theta>K\}}
 \tau^{-\frac32} \tau_x^2 \dd x \notag\\
 &\leq\frac{C_\beta}{K}
 \int_{\{\theta>K\}}
 \tau^{\beta-\frac32}\frac{\tau_x^2}{v}\dd x\notag\\
 &\leq \frac{C_\beta}{K}\int_\Omega
 \frac{g_\beta'(\theta)\theta^\beta\theta_x^2}{v}\dd x,
                                                               \label{eq:hotcap}
\end{align}
where the final step in \eqref{eq:hotcap} follows from
\begin{equation}\label{eq:gbeta-id}
 \frac{g_\beta'(\theta)\theta^\beta\theta_x^2}{v}
 =\frac{K^\beta}{1+\beta}
 \left\{\beta^2\tau^{-1}
 +\frac12\tau^{\beta-\frac32}\right\}
 \frac{\tau_x^2}{v}.
\end{equation}

Choose $K=\max\{4C_\beta,4\}$. Substitution of \eqref{eq:hotcap} into
\eqref{eq:cap-id} allows
$C_\beta K\int_\Omega(\tau-1)_+^2\dd x$ to be absorbed into the
$g_\beta'$-dissipation. Integration in time then gives
\begin{align*}
 &\sup_{0\leq t\leq T}\int_\Omega
 \{u^2+|\mathbf{w}|^2+v|\mathbf{b}|^2+2\Phi(v)+F_\beta(\theta)\}\dd x\\
 &\quad+\int_0^T\!\int_\Omega
 \frac{\zeta_\beta(\theta)(\mu u_x^2+|\mathbf{w}_x|^2+|\mathbf{b}_x|^2)
 +g_\beta'(\theta)\theta^\beta\theta_x^2}{v}\dd x\dd t
 \leq C.
\end{align*}
Since $\zeta_\beta(\theta)\geq K^{-1}$ and \eqref{eq:state} gives finite
upper and positive lower bounds for $v$, this proves \eqref{eq:cap}.

For $\beta>0$ and $c\leq\theta\leq K$, we have
\[
 g_\beta'(\theta)\theta^\beta
 =2\theta^{\beta-2}
 \geq\frac{c_{\beta,K}}{\theta}.
\]
For $\theta>K$, we use $\tau=\frac{\theta}{K}$ to obtain
\[
 g_\beta'(\theta)\theta^\beta
 =\frac{K^{\beta-2}}{1+\beta}
 \left\{\beta^2\tau^{-1}
 +\frac12\tau^{\beta-\frac32}\right\}
 \geq\frac{\beta^2K^{\beta-1}}{1+\beta}\frac1\theta.
\]
Therefore,
\begin{equation}\label{eq:gbeta-lower}
 g_\beta'(\theta)\theta^\beta
 \geq\frac{c_{\beta,K}}{\theta},
 \qquad \theta\geq c.
\end{equation}
Combining \eqref{eq:gbeta-lower} with \eqref{eq:state} proves
\eqref{eq:cappos}.
\end{proof}

\begin{lemma}\label{l:flux}
We have
\begin{align}
 &\sup_{0\leq t\leq T}
 (\|\mathbf{w}\|_{H^1}^2
 + \|\mathbf{b}\|_{H^1}^2 )  +\int_0^T\!\int_\Omega\biggl\{
 \left|\mathbf{w}_t \right|^2
 + \left|\mathbf{b}_t \right|^2
 +\left|\left(\frac{\mathbf{w}_x}{v}\right)_x\right|^2
 +\left|\left(\frac{\mathbf{b}_x}{v}\right)_x\right|^2
 \biggr\}\dd x\dd t\leq C.                         \label{eq:trflux}
\end{align}
Moreover,
\begin{align}
 &\int_0^T\bigl\{
 \norm{\mathbf{w}_x(t)}_{L^\infty}^2+\norm{\mathbf{b}_x(t)}_{L^\infty}^2
 +\norm{\mathbf{w}_x(t)}_{L^\infty}^4+\norm{\mathbf{b}_x(t)}_{L^\infty}^4
  \bigr\}\dd t\leq C.
                                                               \label{eq:tr-max}
\end{align}
\end{lemma}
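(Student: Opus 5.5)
The plan is to test the transverse equations \eqref{eq:trans} and \eqref{eq:ind} against the flux derivatives $-(\mathbf{w}_x/v)_x$ and $-(\mathbf{b}_x/v)_x$, and to use only \eqref{eq:state}, \eqref{eq:cap}, and the one-dimensional inequalities \eqref{eq:GN-infty}--\eqref{eq:GN}. No bound on $v_x$ is available yet, so every step must avoid $v_x$.

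\textbf{Step 1 ($\mathbf{w}$).} Multiply \eqref{eq:trans} by $-(\mathbf{w}_x/v)_x$ and integrate by parts in time. Using $v_t=u_x$, one has $\int \mathbf{w}_t\cdot(-(\mathbf{w}_x/v)_x)=\frac{\dd}{\dd t}\int\frac{|\mathbf{w}_x|^2}{2v}+\int\frac{u_x|\mathbf{w}_x|^2}{2v^2}$. This gives
\[
\frac{\dd}{\dd t}\int_\Omega\frac{|\mathbf{w}_x|^2}{2v}\dd x+\int_\Omega\left|\left(\frac{\mathbf{w}_x}{v}\right)_x\right|^2\dd x
=-\int_\Omega\mathbf{b}_x\cdot\left(\frac{\mathbf{w}_x}{v}\right)_x\dd x-\int_\Omega\frac{u_x|\mathbf{w}_x|^2}{2v^2}\dd x.
\]
\begin{itemize}
\item The first right-hand term is bounded by $\frac14\|(\mathbf{w}_x/v)_x\|^2+C\|\mathbf{b}_x\|^2$, which is integrable in time by \eqref{eq:cap}.
\item For the cubic term, \eqref{eq:GN-infty} applied to $f=\mathbf{w}_x/v$ gives $\|\mathbf{w}_x/v\|_{L^\infty}^2\le 2\|\mathbf{w}_x/v\|_{L^2}\|(\mathbf{w}_x/v)_x\|_{L^2}$. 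Hence the term is at most $C\|u_x\|_{L^2}\|\mathbf{w}_x\|_{L^2}^{3/2}\|(\mathbf{w}_x/v)_x\|^{1/2}$, which is $\le \frac14\|(\mathbf{w}_x/v)_x\|^2+C\|u_x\|_{L^2}^{4/3}\|\mathbf{w}_x\|_{L^2}^2$.
\end{itemize}

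\textbf{Step 2 ($\mathbf{b}$).} The magnetic equation \eqref{eq:ind} reads $v\mathbf{b}_t+u_x\mathbf{b}-\mathbf{w}_x=(\mathbf{b}_x/v)_x$. Test it with $-(\mathbf{b}_x/v)_x/v$, so that the time derivative again becomes $\frac{\dd}{\dd t}\int|\mathbf{b}_x|^2/(2v)$ plus a cubic $u_x|\mathbf{b}_x|^2$ term, handled as in Step 1. The new term $\int u_x\mathbf{b}\cdot(\mathbf{b}_x/v)_x/v$ is bounded by $\varepsilon\|(\mathbf{b}_x/v)_x\|^2+C\|\mathbf{b}\|_{L^\infty}^2\|u_x\|^2$. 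By \eqref{eq:GN-infty} and \eqref{eq:cap}, $\|\mathbf{b}\|_{L^\infty}^2\le C\|\mathbf{b}_x\|_{L^2}$.

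\textbf{Step 3 (Gronwall closure).} Add the two estimates and set $Y=\int(|\mathbf{w}_x|^2+|\mathbf{b}_x|^2)/v$. The result is $Y'+c\|((\mathbf{w}_x,\mathbf{b}_x)/v)_x\|^2\le C(1+\|u_x\|^2)(\|\mathbf{w}_x\|^2+\|\mathbf{b}_x\|^2)+C\|u_x\|^2Y$, and hence
\[
Y'\le C\bigl(1+\|u_x\|_{L^2}^2\bigr)Y+C\|(\mathbf{w}_x,\mathbf{b}_x)\|_{L^2}^2 .
\]
\begin{itemize}
\item \emph{Main obstacle.} \eqref{eq:cap} gives only $\int_0^T\|\mu^{1/2}u_x\|^2\le C$, which by \eqref{eq:state-mu} controls $\int_0^T\|u_x\|^2$. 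This suffices, because the coefficient $\|u_x\|^2$ multiplies $Y$ and is integrable in time.
\item Applying Gronwall then bounds $\sup_t Y$ and the dissipation integral, since $\|(\mathbf{w}_x,\mathbf{b}_x)\|^2\in L^1_t$ by \eqref{eq:cap}. I expect the care to lie in making every bound on the cubic and $u_x\mathbf{b}$ terms depend only on quantities integrable in time.
\item The $L^2$ parts of the $H^1$ norms come from \eqref{eq:cap}.
\item The bounds on $\mathbf{w}_t$ and $\mathbf{b}_t$ in $L^2_{t,x}$ then follow directly from the equations, using $\|u_x\mathbf{b}\|\le\|\mathbf{b}\|_{L^\infty}\|u_x\|$.
\end{itemize}

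\textbf{Step 4 (\eqref{eq:tr-max}).} Since $v$ is bounded above and below, $\|\mathbf{w}_x\|_{L^\infty}\le C\|\mathbf{w}_x/v\|_{L^\infty}$.
\begin{itemize}
\item By \eqref{eq:GN-infty}, $\|\mathbf{w}_x/v\|_{L^\infty}^2\le C\|\mathbf{w}_x\|\,\|(\mathbf{w}_x/v)_x\|$. The square is then integrable in time, by Cauchy--Schwarz against \eqref{eq:cap} and \eqref{eq:trflux}.
\item The fourth power is bounded by $C\|\mathbf{w}_x\|^2\|(\mathbf{w}_x/v)_x\|^2$, which is integrable using $\sup_t\|\mathbf{w}_x\|^2\le C$.
\item The same argument applies to $\mathbf{b}_x$.
\end{itemize}
All boundary terms vanish: $\mathbf{w}=0$ at $x=0$ gives $\mathbf{w}_t=0$ there, and $\mathbf{b}_x=0$ or $\mathbf{b}_t=0$ at $x=0$ handles the magnetic term under either boundary condition. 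The manipulations are justified as in Appendix~\ref{app:energy}.
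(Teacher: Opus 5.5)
Your proof is correct, but it takes a different route from the paper. The paper rewrites the transverse equations in terms of $r_w=\mathbf w_x/v$ and $r_b=\mathbf b_x/v$, and tests them with the transport-corrected quantities $\mathbf w_t-ur_w$ and $\mathbf b_t-ur_b$.

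\begin{itemize}
\item With that choice, the terms generated by $v_t=u_x$ and by $-\int ur_w\cdot(r_w)_x$ combine into the exact derivative of $\frac12\int v(|r_w|^2+|r_b|^2)$. No cubic $u_x|r|^2$ remainder appears.
\item The price is that the dissipation is only $\int\{|\mathbf w_t-ur_w|^2+v|\mathbf b_t-ur_b|^2\}$.
\item The remainders $|u|^2|r|^2$ and $u_x^2|\mathbf b|^2$ are then controlled through $\|u\|_{L^\infty}^2$ and $\|\mathbf b\|_{L^\infty}^2$.
\item $(r_w)_x$, $(r_b)_x$, $\mathbf w_t$ and $\mathbf b_t$ are recovered afterwards from the equations.
\end{itemize}

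You instead test with $-(r_w)_x$ and $-(r_b)_x/v$. This gives $\|(r_w)_x\|_{L^2}^2$ and $\|(r_b)_x\|_{L^2}^2$ directly as dissipation. The cost is the cubic term $\frac12\int u_x|r|^2$, which your interpolation and Young step reduce to $C\|u_x\|_{L^2}^{4/3}\|r\|_{L^2}^2$. You then read off $\mathbf w_t$ and $\mathbf b_t$ from the equations.

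Both arguments avoid $v_x$, and both close through a Gronwall inequality whose coefficient is $C\|u_x\|_{L^2}^2\in L^1(0,\infty)$. Yours is the standard parabolic test. Its time-derivative identity is exactly \eqref{eq:energy-fx} in the appendix, so the justification is already recorded there.

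One point needs fixing in the write-up. As displayed, $Y'\le C(1+\|u_x\|_{L^2}^2)Y+C\|(\mathbf w_x,\mathbf b_x)\|_{L^2}^2$ has a coefficient that is not integrable on $(0,\infty)$. Applying Gronwall to it literally gives a bound that grows like $e^{CT}$, which destroys the uniformity in $T$.

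Since $Y\le C\|(\mathbf w_x,\mathbf b_x)\|_{L^2}^2$, you should instead write
\[
Y'\le C\|u_x\|_{L^2}^2Y+C\bigl(\|(\mathbf w_x,\mathbf b_x)\|_{L^2}^2+\|u_x\|_{L^2}^2\bigr).
\]
The standalone $C\|u_x\|_{L^2}^2$ comes from $C\|\mathbf b_x\|_{L^2}\|u_x\|_{L^2}^2\le C\|u_x\|_{L^2}^2(1+\|\mathbf b_x\|_{L^2}^2)$; it was dropped from your display but is integrable in time. Your bullet shows this is what you intend.

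Finally, you did not list the term $\int\mathbf w_x\cdot(r_b)_x/v$ in Step 2. It is handled exactly like $\int\mathbf b_x\cdot(r_w)_x$ in Step 1.
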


\begin{proof}
Set
\begin{equation}\label{eq:rw-def}
 r_w=\frac{\mathbf{w}_x}{v}.
\end{equation}
Define likewise
\begin{equation}\label{eq:rb-def}
 r_b=\frac{\mathbf{b}_x}{v}.
\end{equation}
Equations \eqref{eq:trans} and \eqref{eq:ind} can be written as
\begin{equation}\label{eq:rw-id}
 (r_w)_x=(\mathbf{w}_t-ur_w)+ur_w-vr_b.
\end{equation}
The induction equation becomes
\begin{equation}\label{eq:rb-id}
 (r_b)_x=v\left\{(\mathbf{b}_t-ur_b)+ur_b+\frac{u_x}{v}\mathbf{b}-r_w\right\}.
\end{equation}

We first estimate $r_w$. Taking the inner product of \eqref{eq:rw-id}
with $\mathbf{w}_t-ur_w$ and integrating over $\Omega$, we use $\mathbf{w}_x=vr_w$ and
$v_t=u_x$ to obtain
\begin{align*}
 \int_\Omega (r_w)_x\cdot(\mathbf{w}_t-ur_w)\dd x
 &=-\int_\Omega r_w\cdot(\mathbf{w}_x)_t\dd x
 +\frac12\int_\Omega u_x|r_w|^2\dd x\\
 &=-\int_\Omega v r_w\cdot (r_w)_t\dd x
 -\frac12\int_\Omega u_x|r_w|^2\dd x\\
 &=-\frac12\frac{\dd}{\dd t}\int_\Omega v|r_w|^2\dd x.
\end{align*}
Thus,
\begin{equation}
 \frac12\frac{\dd}{\dd t}\int_\Omega v|r_w|^2\dd x
 +\int_\Omega|\mathbf{w}_t-ur_w|^2\dd x
 =\int_\Omega(vr_b-ur_w)\cdot(\mathbf{w}_t-ur_w)\dd x.       \label{eq:rE}
\end{equation}
For $r_b$, taking the inner product of \eqref{eq:rb-id} with
$\mathbf{b}_t-ur_b$ and using $\mathbf{b}_x=vr_b$ and $v_t=u_x$
gives
\begin{equation}
 \frac12\frac{\dd}{\dd t}\int_\Omega v|r_b|^2\dd x
 +\int_\Omega v|\mathbf{b}_t-ur_b|^2\dd x
 =-\int_\Omega v\left(ur_b+\frac{u_x}{v}\mathbf{b}-r_w\right)
 \cdot(\mathbf{b}_t-ur_b)\dd x.                                \label{eq:sE}
\end{equation}
In the half-line cases, the boundary terms vanish because $u=\mathbf{w}=0$ at
$x=0$ and either $r_b=\mathbf{b}_x/v=0$ or $\mathbf{b}_t=0$ there, according as
\eqref{eq:bc-N} or \eqref{eq:bc-D} is imposed. The whole-line boundary
terms vanish by the far-field conditions.
Define
\begin{equation}
 Y(t)=\frac12\int_\Omega v
 (|r_w|^2+|r_b|^2)\dd x.                              \label{eq:tr-Y}
\end{equation}
Adding \eqref{eq:rE} and \eqref{eq:sE}, and then applying Young's
inequality and \eqref{eq:state}, we absorb half of the two dissipation
terms and find
\begin{align}
 Y'(t)&+c\int_\Omega\{|\mathbf{w}_t-ur_w|^2+v|\mathbf{b}_t-ur_b|^2\}\dd x\notag\\
 &\leq C
\int_\Omega\left(
 |u|^2(|r_w|^2+|r_b|^2)+|u_x|^2|\mathbf{b}|^2+|r_w|^2+|r_b|^2 \right)
 \dd x\notag\\
 &\leq C Y(t)+C\norm{u(t)}_{L^\infty}^2Y(t)
 +C\norm{\mathbf{b}(t)}_{L^\infty}^2
 \int_\Omega\frac{u_x^2}{v}\dd x \notag\\
 &\leq C\bigl(Y(t)+\|u_x(t)\|_{L^2}^2\bigr)+CY(t)\|u_x(t)\|_{L^2}^2.
                                                               \label{eq:flux}
\end{align}

By \eqref{eq:cap} and \eqref{eq:state},
\begin{equation}
 \int_0^T\left( Y(t)+\|u_x\|_{L^2}^2\right)\dd t\leq C.                \label{eq:fluxL1}
\end{equation}
Rewrite \eqref{eq:flux} as
\begin{equation}\label{eq:flux-gronwall}
 Y'(t)\leq C\norm{u_x(t)}_{L^2}^2Y(t)
 +C\bigl(Y(t)+\norm{u_x(t)}_{L^2}^2\bigr).
\end{equation}
The coefficient of $Y$ and the forcing term in \eqref{eq:flux-gronwall}
are integrable uniformly in $T$ by \eqref{eq:fluxL1}. The integral form
of Gronwall's inequality
therefore yields
 \begin{equation}\sup_{0\leq t\leq T}Y(t)+
 \int_0^T\!\int_\Omega
 \{|\mathbf{w}_t-ur_w|^2+v|\mathbf{b}_t-ur_b|^2\}\dd x\dd t\leq C.
                                                               \label{eq:trt}
\end{equation}
By \eqref{eq:GN-infty}, \eqref{eq:fluxL1}, and \eqref{eq:trt},
\begin{equation}\label{eq:flux-mixed-u}
 \int_0^T\!\int_\Omega |u|^2(|r_w|^2+|r_b|^2)\dd x\dd t
 \leq C\int_0^T\bigl(Y+Y\norm{u_x}_{L^2}^2\bigr)\dd t\leq C.
\end{equation}
The bounds in \eqref{eq:trt} and the one-dimensional embedding imply
$\sup_{0\leq t\leq T}\norm{\mathbf{b}}_{L^\infty}\leq C$. Hence
\begin{equation}\label{eq:flux-mixed-b}
 \int_0^T\!\int_\Omega |u_x|^2|\mathbf{b}|^2\dd x\dd t
 +\int_0^T\!\int_\Omega(|r_w|^2+|r_b|^2)\dd x\dd t
 \leq C.
\end{equation}
Substitution into \eqref{eq:rw-id} and \eqref{eq:rb-id}, together with
\eqref{eq:trt},
gives
\begin{equation}\label{eq:flux-derivatives}
 \int_0^T\!\int_\Omega
 \bigl(|\mathbf{w}_t|^2+|\mathbf{b}_t|^2+|(r_w)_x|^2+|(r_b)_x|^2\bigr)\dd x\dd t\leq C.
\end{equation}
Together with \eqref{eq:tr-Y} and \eqref{eq:cap},
\eqref{eq:flux-derivatives} proves \eqref{eq:trflux}.

To prove \eqref{eq:tr-max}, apply the one-dimensional inequality
$\norm{f}_{L^\infty}^2\leq2\norm{f}_{L^2}\norm{f_x}_{L^2}$,
\eqref{eq:fluxL1}, \eqref{eq:trt}, and the $(r_w)_x,(r_b)_x$ bounds in
\eqref{eq:trflux} to obtain
\begin{equation}\label{eq:flux-Linfty}
 \int_0^T\{\norm{r_w(t)}_{L^\infty}^2+\norm{r_b(t)}_{L^\infty}^2
 +\norm{r_w(t)}_{L^\infty}^4+\norm{r_b(t)}_{L^\infty}^4\}\dd t\leq C.
\end{equation}
Since $\mathbf{w}_x=vr_w$, $\mathbf{b}_x=vr_b$, and $v\leq C$, \eqref{eq:flux-Linfty} proves
\eqref{eq:tr-max}.

We shall also use the following consequence of \eqref{eq:GN},
\eqref{eq:cap}, and \eqref{eq:trflux}:
\begin{equation}\label{eq:b-inf}
 \int_0^T\norm{\mathbf{b}(t)}_{L^\infty}^4\dd t
 \leq C\sup_{0\leq t\leq T}\norm{\mathbf{b}(t)}_{L^2}^2
 \int_0^T\norm{\mathbf{b}_x(t)}_{L^2}^2\dd t\leq C.
\end{equation}
\begin{equation}\label{eq:b-inf-prod}
 \int_0^T\!\int_\Omega |\mathbf{b}|^2|\mathbf{b}_x|^2\dd x\dd t
 \leq \sup_{0\leq t\leq T}\norm{\mathbf{b}(t)}_{L^\infty}^2
 \int_0^T\norm{\mathbf{b}_x(t)}_{L^2}^2\dd t\leq C.
\end{equation}
\qedhere
\end{proof}

When $\beta=0$, the entropy dissipation does not directly control
$\theta_x^2$. The following superlevel estimate supplies the missing
bound.

\begin{proposition}\label{p:hot0}
If $\beta=0$, then
\begin{align}
 &\sup_{0\leq t\leq T}\int_\Omega(\theta-2)_+^2\dd x
 +\int_0^T\!\int_\Omega\theta_x^2\dd x\dd t+\int_0^T\norm{(\theta(t)-2)_+}_{L^\infty}^2\dd t\leq C.
                                                               \label{eq:hot-0}
\end{align}
\end{proposition}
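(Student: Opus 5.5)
We test the temperature equation \eqref{eq:temp} (in its internal-energy form, $\theta_t+\frac{\theta u_x}{v}=\bigl(\frac{\theta_x}{v}\bigr)_x+\frac{\mu u_x^2+|\mathbf{w}_x|^2+|\mathbf{b}_x|^2}{v}$ for $\beta=0$) against $(\theta-2)_+$. The heat-flux boundary terms vanish by $\theta_x(0,t)=0$ and the far-field conditions; the approximation of Appendix~\ref{app:energy} justifies the calculation. This gives
\begin{align*}
&\frac12\frac{\dd}{\dd t}\int_\Omega(\theta-2)_+^2\dd x+\int_{\{\theta>2\}}\frac{\theta_x^2}{v}\dd x\\
&\quad=\int_\Omega\frac{\mu u_x^2+|\mathbf{w}_x|^2+|\mathbf{b}_x|^2}{v}(\theta-2)_+\dd x-\int_\Omega\frac{\theta u_x}{v}(\theta-2)_+\dd x.
\end{align*}
By \eqref{eq:state}, the dissipation on the left is at least $c\int_\Omega|((\theta-2)_+)_x|^2\dd x$. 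The source terms are bounded by
\[
 C\norm{(\theta-2)_+}_{L^\infty}\int_\Omega(u_x^2+|\mathbf{w}_x|^2+|\mathbf{b}_x|^2)\dd x.
\]
The pressure term is split using $\theta\leq(\theta-2)_++2$. This gives
\[
 C\int_\Omega|u_x|\bigl((\theta-2)_+^2+(\theta-2)_+\bigr)\dd x
 \leq C\norm{u_x}_{L^2}\Bigl(\norm{(\theta-2)_+}_{L^4}^2+\norm{(\theta-2)_+}_{L^2}\Bigr).
\]

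Set $f=(\theta-2)_+$. By \eqref{eq:hot} with $K=2$, $\norm{f}_{L^1}\leq C$, uniformly in time. The one-dimensional interpolation $\norm f_{L^\infty}\leq C\norm f_{L^1}^{1/3}\norm{f_x}_{L^2}^{2/3}$ therefore yields $\norm f_{L^\infty}\leq C\norm{f_x}_{L^2}^{2/3}$. It also yields $\norm f_{L^4}^2\leq \norm f_{L^\infty}\norm f_{L^1}\leq C\norm{f_x}_{L^2}^{2/3}$ and $\norm f_{L^2}^2\leq C\norm{f_x}_{L^2}^{2/3}$. Write $A(t)=\norm{(u_x,\mathbf{w}_x,\mathbf{b}_x)}_{L^2}^2$, which lies in $L^1(0,T)$ uniformly by \eqref{eq:cap} and \eqref{eq:state}. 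The first source is at most $CA\norm{f_x}^{2/3}$, and $CA\,\norm{f_x}^{2/3}\leq \varepsilon\norm{f_x}^2+C_\varepsilon A^{3/2}$ by Young.

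The difficulty is that $A^{3/2}$ need not be integrable. I would instead keep a factor of $f$ and use Gronwall. The estimate
\[
 A\norm f_{L^\infty}\leq A\bigl(\norm f_{L^2}\norm{f_x}_{L^2}\bigr)^{1/2}\cdot C
 \leq \varepsilon\norm{f_x}_{L^2}^2+C A^{4/3}\norm f_{L^2}^{2/3}
\]
still fails. So this is the main obstacle: the transverse terms must be handled with the stronger bounds from Lemma~\ref{l:flux}. For them, use \eqref{eq:tr-max}:
\[
 \int\frac{|\mathbf{w}_x|^2}{v}f\leq C\norm{\mathbf{w}_x}_{L^\infty}^2\norm f_{L^1},
\]
which is integrable in time. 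For the $u_x^2$ part, I would integrate by parts against the momentum equation. Alternatively, I would use the effective flux, writing $\frac{\mu u_x}{v}=\sigma+P_*$. This leaves $\int\frac{\mu u_x^2}{v}f=\int u_x\sigma f+\int u_xP_*f$. The term $\int u_xP_*f$ merges with the pressure term as above, and $\int u_x\sigma f=-\int u(\sigma_xf+\sigma f_x)$ with $\sigma_x=u_t$. This produces $-\frac{\dd}{\dd t}\int \frac{u^2}{2}f$ plus terms containing $u^2 f_t$; these are again handled by the temperature equation, giving a $u^2$-weighted correction to the functional. If that becomes circular, the fallback is to first prove $\int_0^T\norm{f}_{L^\infty}^2\dd t\leq C$ via the local argument used for \eqref{eq:heat-conv}. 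That argument, combined with $\mathcal K_0(\theta)=\Phi(\theta)$ and \eqref{eq:Keta-int}, bounds $\int_s^t\int_\Omega \Phi(\theta)\eta$. Then $A\norm f_{L^\infty}\leq A^2/\dots$ is avoided by splitting into good and bad times as in Lemma~\ref{l:osc}: on good times $f\equiv0$ after raising the threshold $2$ to $C_\theta$, and the bad set has measure at most $C$.

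With the sources controlled by $\varepsilon\norm{f_x}_{L^2}^2+g(t)(1+\norm f_{L^2}^2)$ for some $g\in L^1(0,\infty)$ with norm independent of $T$, Gronwall gives $\sup_t\norm f_{L^2}^2+\int_0^T\norm{f_x}_{L^2}^2\dd t\leq C$. Then $\int_0^T\norm f_{L^\infty}^2\leq C\int_0^T\norm f_{L^2}\norm{f_x}_{L^2}$ needs one more step. Write $\norm f_{L^\infty}^2\leq C\norm{f}_{L^1}^{2/3}\norm{f_x}_{L^2}^{4/3}$ and combine with Hölder on the bad-time set; on good times $f=0$, and $|\mathcal B|\leq C$. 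Finally, $\int\theta_x^2$ splits into the part on $\{\theta>2\}$, bounded by the above, and the part on $\{\theta\leq2\}$. The latter is at most $C\int\frac{\theta_x^2}{v\theta^2}$, which is controlled by the entropy identity \eqref{eq:ent-id} with $\beta=0$.
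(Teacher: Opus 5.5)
\textbf{There is a genuine gap, located at the step you yourself flag as the main obstacle: the viscous heating term} $\int_\Omega\frac{\mu u_x^2}{v}(\theta-2)_+\dd x$.

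Your first remedy is the right start. Writing $\frac{\mu u_x}{v}=\sigma+P_*$ and using $\sigma_x=u_t$ amounts to testing \eqref{eq:mom} with $u(\theta-2)_+$; the paper tests with $2u(\theta-2)_+$. But you do not close it, and ``handling the $u^2f_t$ terms by the temperature equation'' is exactly where the argument becomes circular.

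\begin{itemize}
\item Substituting the temperature equation into $\int_{\{\theta>2\}}\frac{u^2}{2}\theta_t\dd x$ produces the positive source $\int_{\{\theta>2\}}\frac{\mu u^2u_x^2}{v}\dd x$.
\item Both the heat-flux part of $\theta_t$ and the term $-\int u\sigma f_x$ produce terms of the type $\int_{\{\theta>2\}}\frac{\mu uu_x\theta_x}{v}\dd x$.
\end{itemize}

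All of these need a bound on $\int_0^T\!\int_\Omega u^2u_x^2\dd x\dd t$. Nothing proved so far gives one: you only have $\int_\Omega u^2u_x^2\dd x\leq C\norm{u_x}_{L^2}^3$.

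The missing ingredient is an independent coercive quantity. The paper supplies it as follows.
\begin{itemize}
\item It also tests \eqref{eq:mom} with $u^3$ (see \eqref{eq:u3-0}), which gives the dissipation $3\int\frac{\mu u^2u_x^2}{v}\dd x$.
\item It treats the pressure term there via $\int_\Omega u^2u_x\dd x=0$ and the splitting $\frac\theta v-1=\frac{1-v}{v}+\frac{\theta-1}{v}$.
\item It adds $\Lambda_0$ times this identity, with $\Lambda_0$ large.
\end{itemize}

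The factor $2$ in the paper's test also matters. It leaves a surplus $\int\frac{\mu(\theta-2)_+u_x^2}{v}\dd x$ that absorbs the thermal pressure term, up to a remainder $C\norm{(\theta-\frac32)_+}_{L^\infty}^2$. That remainder is closed as in \eqref{eq:hotid0}:
\[
\norm{(\theta-\tfrac32)_+}_{L^\infty}^2\leq C\int_{\{\theta>3/2\}}\frac{\theta_x^2}{v\theta}\dd x\leq C\varepsilon\int_\Omega\theta_x^2\dd x+C_\varepsilon\int_\Omega\frac{\theta_x^2}{v\theta^2}\dd x .
\]
Separately, the magnetic part of $P_*$ does not merge with the thermal pressure estimate; it needs $\int_0^T\norm{\mathbf b}_{L^\infty}^4\dd t\leq C$ from \eqref{eq:b-inf}.

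\textbf{Your fallback does not repair this.}

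\begin{itemize}
\item Estimate \eqref{eq:heat-conv} combined with \eqref{eq:Keta-int} only gives bounds growing like $C(1+t-s)$.
\item For $\beta=0$, the $\chi$-test only yields $\int_{\{\theta>2\}}\theta_x^2/(v\theta^2)\dd x$. That controls $\log\theta$, not $\int_0^T\norm{f}_{L^\infty}^2\dd t$.
\item Even that bound would not make $A\norm f_{L^\infty}$ integrable, since $A$ is only in $L^1(0,T)$.
\end{itemize}

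Raising the threshold to $K\geq C_\theta$ is a correct observation: by Lemma~\ref{l:osc}, $(\theta-K)_+$ vanishes for a.e.\ $t\notin\mathcal B$. It does handle the thermal pressure term, because $\int_{\mathcal B}\norm{u_x}_{L^2}^{4/3}\dd t\leq|\mathcal B|^{1/3}\bigl(\int_0^T\norm{u_x}_{L^2}^2\dd t\bigr)^{2/3}$. But the viscous term still leads to $\int_{\mathcal B}A^{4/3}\dd t$ or $\int_{\mathcal B}A^{3/2}\dd t$. Knowing $A\in L^1$ and $|\mathcal B|\leq C$ gives no control of $\int_{\mathcal B}A^p\dd t$ for $p>1$. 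So your Gronwall hypothesis, that the sources are at most $\varepsilon\norm{f_x}_{L^2}^2+g(1+\norm f_{L^2}^2)$ with $g\in L^1$, is never verified.

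Finally, at the threshold $2$ in the statement, $f$ need not vanish on good times. The last term in \eqref{eq:hot-0} follows directly from $\norm{(\theta-2)_+}_{L^\infty}^2\leq|\{\theta>2\}|\int_{\{\theta>2\}}\theta_x^2\dd x$ together with \eqref{eq:hot}, once $\int_0^T\!\int_\Omega\theta_x^2\dd x\dd t$ is bounded.
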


\begin{proof}
Testing \eqref{eq:temp} with $(\theta-2)_+$
(cf.\ \cite{LiLiang2016}) and integrating over $\Omega$, we obtain
\begin{align}
 &\frac12\frac{\dd}{\dd t}\int_\Omega(\theta-2)_+^2\dd x
 +\int_{\{\theta>2\}}\frac{\theta_x^2}{v}\dd x\notag\\
 &\quad=-\int_\Omega\frac{\theta u_x}{v}(\theta-2)_+\dd x
 +\int_\Omega\frac{\mu u_x^2}{v}(\theta-2)_+\dd x
 +\int_\Omega\frac{(\theta-2)_+(|\mathbf{w}_x|^2+|\mathbf{b}_x|^2)}{v}\dd x.
                                                        \label{eq:heat-0}
\end{align}
All boundary terms vanish under \eqref{eq:bc-C}--\eqref{eq:bc-D}; in
particular, $\theta_x=0$ at $x=0$ in both half-line problems.

To couple \eqref{eq:heat-0} with the longitudinal momentum equation, test
\eqref{eq:mom} with $2u(\theta-2)_+$. Since
\begin{equation}\label{eq:cutoff-product}
\begin{aligned}
 \partial_t(\theta-2)_+
 &=\mathbf1_{\{\theta>2\}}\theta_t,\\
 2u(\theta-2)_+u_t
 &=\bigl[u^2(\theta-2)_+\bigr]_t
 -\mathbf1_{\{\theta>2\}}u^2\theta_t
\end{aligned}
\end{equation}
hold almost everywhere, integration by parts gives
\begin{equation}\label{eq:u-theta-0}
\begin{aligned}
 &\frac{\dd}{\dd t}\int_\Omega u^2(\theta-2)_+\dd x
 +2\int_\Omega\frac{\mu(\theta-2)_+u_x^2}{v}\dd x\\
 &=\int_{\{\theta>2\}}u^2\theta_t\dd x
 -2\int_{\{\theta>2\}}\frac{\mu uu_x\theta_x}{v}\dd x\\
 &\quad+2\int_\Omega\frac{\theta u_x}{v}(\theta-2)_+\dd x
 +2\int_{\{\theta>2\}}\frac{\theta u\theta_x}{v}\dd x\\
 &\quad+2\int_\Omega\frac{|\mathbf{b}|^2}{2}(\theta-2)_+u_x\dd x
 +2\int_{\{\theta>2\}}\frac{|\mathbf{b}|^2}{2}u\theta_x\dd x\\
 &\triangleq \sum_{i=1}^6 I_i(t).
\end{aligned}
\end{equation}
We first treat $I_1$, which contains $\theta_t$. For $\beta=0$,
\eqref{eq:temp} reads
\[
 \theta_t=\left(\frac{\theta_x}{v}\right)_x
 -\frac{\theta u_x}{v}
 +\frac{\mu u_x^2+|\mathbf{w}_x|^2+|\mathbf{b}_x|^2}{v}.
\]
For $\rho>0$, define
\[
 \varphi_\rho(\xi)=
 \begin{cases}
  0,&\xi\leq2,\\[1mm]
  \frac{\xi-2}{\rho},&2<\xi<2+\rho,\\[2mm]
  1,&\xi\geq2+\rho.
 \end{cases}
\]
With the Lipschitz approximation $\varphi_\rho$, for every
$\varepsilon>0$ we have
\begin{align*}
 I_1(t)
 &=\lim_{\rho\to0^+}\int_\Omega\varphi_\rho(\theta)u^2
 \left(\frac{\theta_x}{v}\right)_x\dd x
 -\int_{\{\theta>2\}}\frac{u^2\theta u_x}{v}\dd x+\int_{\{\theta>2\}}
 \frac{u^2(\mu u_x^2+|\mathbf{w}_x|^2+|\mathbf{b}_x|^2)}{v}\dd x\\
 &=\lim_{\rho\to0^+}\Biggl\{-2\int_\Omega\varphi_\rho(\theta)
 \frac{uu_x\theta_x}{v}\dd x-\int_\Omega\varphi_\rho'(\theta)
 \frac{u^2\theta_x^2}{v}\dd x
\Biggr\}\\
&\quad-\int_{\{\theta>2\}}\frac{u^2\theta u_x}{v}\dd x
 +\int_{\{\theta>2\}}
 \frac{u^2(\mu u_x^2+|\mathbf{w}_x|^2+|\mathbf{b}_x|^2)}{v}\dd x\\
 &\leq-2\int_{\{\theta>2\}}\frac{uu_x\theta_x}{v}\dd x
 -\int_{\{\theta>2\}}\frac{u^2\theta u_x}{v}\dd x+\int_{\{\theta>2\}}
 \frac{u^2(\mu u_x^2+|\mathbf{w}_x|^2+|\mathbf{b}_x|^2)}{v}\dd x\\
 &\leq\varepsilon\int_\Omega\theta_x^2\dd x
 +C_\varepsilon\int_\Omega u^2u_x^2\dd x
 +C_\varepsilon
 \norm{\left(\theta(t)-\frac32\right)_+}_{L^\infty}^2+\int_{\{\theta>2\}}
 \frac{u^2(|\mathbf{w}_x|^2+|\mathbf{b}_x|^2)}{v}\dd x.
\end{align*}
Here $\varphi_\rho'\geq0$ and
$\varphi_\rho(\theta)\to\mathbf1_{\{\theta>2\}}$; the limit follows
by dominated convergence at the level of the regularized solution.
The final estimate of $I_1(t)$ uses
\[
 \theta\leq4\left(\theta-\frac32\right)_+
 \qquad\text{on }\{\theta>2\},
\]
together with \eqref{eq:hot}, \eqref{eq:state},
\eqref{eq:ent-id}, and Young's inequality. The terms $I_2,I_3,I_4$
satisfy
\begin{align*}
 |I_2(t)|
 &\leq\varepsilon\int_\Omega\theta_x^2\dd x
 +C_\varepsilon\int_\Omega u^2u_x^2\dd x,\\
 |I_3(t)|
 &\leq\frac12\int_\Omega\frac{\mu(\theta-2)_+u_x^2}{v}\dd x
 +C\norm{\left(\theta(t)-\frac32\right)_+}_{L^\infty}^2,\\
 |I_4(t)|
 &\leq\varepsilon\int_\Omega\theta_x^2\dd x
 +C_\varepsilon
 \norm{\left(\theta(t)-\frac32\right)_+}_{L^\infty}^2.
\end{align*}
We retain $I_5$ and $I_6$ until after time integration.

The remaining coercive term is obtained by testing \eqref{eq:mom}
with $u^3$:
\begin{align}
 &\frac14\frac{\dd}{\dd t}\int_\Omega u^4\dd x
 +3\int_\Omega\frac{\mu u^2u_x^2}{v}\dd x=3\int_\Omega\frac{\theta}{v}u^2u_x\dd x
 +3\int_\Omega\frac{|\mathbf{b}|^2}{2}u^2u_x\dd x.        \label{eq:u3-0}
\end{align}
Using $\int_\Omega u^2u_x\dd x=0$, we estimate the first term on
the right-hand side of \eqref{eq:u3-0} as follows:
\begin{align}
 \left|\int_\Omega\frac{\theta}{v}u^2u_x\dd x\right|
 &=\left|\int_\Omega\left(\frac{\theta}{v}-1\right)
 u^2u_x\dd x\right|\notag\\
 &\leq\left|\int_\Omega\left\{
 \frac{1-v}{v}+\mathbf1_{\{\theta\leq2\}}\frac{\theta-1}{v}
 \right\}u^2u_x\dd x\right|+\left|\int_{\{\theta>2\}}
 \frac{\theta-1}{v}u^2u_x\dd x\right|\notag\\
 &\leq\norm{\frac{1-v}{v}
 +\mathbf1_{\{\theta\leq2\}}\frac{\theta-1}{v}}_{L^2}
 \norm{u(t)}_{L^\infty}^2\norm{u_x(t)}_{L^2}+\left|\int_{\{\theta>2\}}
 \frac{\theta-1}{v}u^2u_x\dd x\right|\notag\\
 &\leq C\norm{u_x(t)}_{L^2}^2
 +\varepsilon\int_\Omega u^2u_x^2\dd x
 +C_\varepsilon
 \norm{\left(\theta(t)-\frac32\right)_+}_{L^\infty}^2.
                                                        \label{eq:u3-P-0}
\end{align}
The first equality uses
$\frac{\theta}{v}-1=\frac{1-v}{v}+\frac{\theta-1}{v}$. The inequality
in \eqref{eq:u3-P-0}
follows from Hölder's and Young's inequalities,
\eqref{eq:state}, \eqref{eq:ent-id}, and
\eqref{eq:GN}, together with
\begin{equation}\label{eq:v-Phi-square}
 (v-1)^2\leq C\Phi(v).
\end{equation}
On the two temperature regions, respectively,
\begin{equation}\label{eq:theta-Phi-cold}
 (\theta-1)^2\mathbf1_{\{\theta\leq2\}}\leq C\Phi(\theta)
\end{equation}
and
\begin{equation}\label{eq:theta-hot-cutoff}
 (\theta-1)\mathbf1_{\{\theta>2\}}
 \leq4\left(\theta-\frac32\right)_+.
\end{equation}

Adding \eqref{eq:heat-0}, \eqref{eq:u-theta-0}, and $\Lambda_0$ times
\eqref{eq:u3-0}, and using the
estimates for $I_1,\ldots,I_4$,
\eqref{eq:u3-P-0}, \eqref{eq:cap}, and
\eqref{eq:ent-id}, we first choose $\varepsilon>0$ sufficiently small
and then $\Lambda_0>0$ sufficiently large. This gives
\begin{equation}\label{eq:hot0-IJ}
\begin{aligned}
 &\int_\Omega\left\{\frac12(\theta-2)_+^2
 +u^2(\theta-2)_++\frac{\Lambda_0}{4}u^4\right\}\dd x
 +c\int_0^t\!\int_\Omega
 \{(\theta+u^2)u_x^2+\theta_x^2\}\dd x\dd s\\
 &\leq C+C\int_0^t
 \norm{\left(\theta(s)-\frac32\right)_+}_{L^\infty}^2\dd s+C\int_0^t
 \int_\Omega\frac{(\theta-2)_+(|\mathbf{w}_x|^2+|\mathbf{b}_x|^2)}{v}\dd x\dd s\\&\quad
 +C\int_0^t \int_{\{\theta>2\}}\frac{u^2(|\mathbf{w}_x|^2+|\mathbf{b}_x|^2)}{v}\dd x\dd s+C\int_0^t  \int_\Omega\frac{|\mathbf{b}|^2}{2}
 (\theta-2)_+|u_x|\dd x  \dd s\\&
 \quad+C\int_0^t  \int_{\{\theta>2\}}\frac{|\mathbf{b}|^2}{2}|u\theta_x|\dd x\dd s
 +C\int_0^t  \int_\Omega\frac{|\mathbf{b}|^2}{2}u^2|u_x|\dd x   \dd s\\
 & \triangleq C+C\int_0^t
 \norm{\left(\theta(s)-\frac32\right)_+}_{L^\infty}^2\dd s
 +  \sum_{i=1}^5J_i(t) .
\end{aligned}
\end{equation}
For every $\varepsilon>0$, \eqref{eq:hot}, \eqref{eq:state}, and
\eqref{eq:ent-id} give
\begin{align}
 J_1(t)+J_2(t)
 &\leq C\int_0^t
 \{\norm{\mathbf{w}_x(s)}_{L^\infty}^2+\norm{\mathbf{b}_x(s)}_{L^\infty}^2\}
 \left\{\int_\Omega(\theta-2)_+\dd x
 +\int_{\{\theta>2\}}u^2\dd x\right\}\dd s\notag\\
 &\leq C\int_0^t
 \{\norm{\mathbf{w}_x(s)}_{L^\infty}^2+\norm{\mathbf{b}_x(s)}_{L^\infty}^2\}\dd s.
 \label{eq:J12}
\end{align}
For $J_3$, Young's inequality and the uniform bounds for $v$ yield
\begin{align}
 J_3(t)
 &\leq C\int_0^t\norm{\mathbf{b}(s)}_{L^\infty}^2
 \int_\Omega(\theta-2)_+|u_x|\dd x\dd s\notag\\
 &\leq\varepsilon\int_0^t\!\int_\Omega
 \frac{(\theta-2)_+u_x^2}{v}\dd x\dd s
 +C_\varepsilon\int_0^t\norm{\mathbf{b}(s)}_{L^\infty}^4
 \int_\Omega v(\theta-2)_+\dd x\dd s\notag\\
 &\leq\varepsilon\int_0^t\!\int_\Omega
 \frac{(\theta-2)_+u_x^2}{v}\dd x\dd s
 +C_\varepsilon\int_0^t\norm{\mathbf{b}(s)}_{L^\infty}^4\dd s.
 \label{eq:J3}
\end{align}
The fourth remainder term satisfies
\begin{align}
 J_4(t)
 &\leq C\int_0^t\norm{\mathbf{b}(s)}_{L^\infty}^2
 \int_{\{\theta>2\}}|u\theta_x|\dd x\dd s\notag\\
 &\leq\varepsilon\int_0^t\!\int_\Omega\theta_x^2\dd x\dd s
 +C_\varepsilon\int_0^t\norm{\mathbf{b}(s)}_{L^\infty}^4
 \int_{\{\theta>2\}}u^2\dd x\dd s\notag\\
 &\leq\varepsilon\int_0^t\!\int_\Omega\theta_x^2\dd x\dd s
 +C_\varepsilon\int_0^t\norm{\mathbf{b}(s)}_{L^\infty}^4\dd s.
 \label{eq:J4}
\end{align}
For the last remainder term, Young's inequality gives
\begin{align}
 J_5(t)
 &\leq C\int_0^t\norm{\mathbf{b}(s)}_{L^\infty}^2
 \int_\Omega u^2|u_x|\dd x\dd s\notag\\
 &\leq\varepsilon\int_0^t\!\int_\Omega u^2u_x^2\dd x\dd s
 +C_\varepsilon\int_0^t\norm{\mathbf{b}(s)}_{L^\infty}^4
 \int_\Omega u^2\dd x\dd s\notag\\
 &\leq\varepsilon\int_0^t\!\int_\Omega u^2u_x^2\dd x\dd s
 +C_\varepsilon\int_0^t\norm{\mathbf{b}(s)}_{L^\infty}^4\dd s.
 \label{eq:J5}
\end{align}
Here we also used
\begin{equation}\label{eq:hot0-L1}
 \int_\Omega(\theta-2)_+\dd x+\norm{u(t)}_{L^2}^2\leq C.
\end{equation}
Together with \eqref{eq:tr-max}, \eqref{eq:b-inf} gives
\begin{equation}\label{eq:hot0-coeff}
 \int_0^T \left(
 \norm{\mathbf{w}_x(t)}_{L^\infty}^2+\norm{\mathbf{b}_x(t)}_{L^\infty}^2
 +\norm{\mathbf{b}(t)}_{L^\infty}^4\right)\dd t\le C.
\end{equation}
Substituting \eqref{eq:J12}--\eqref{eq:J5} and
\eqref{eq:hot0-coeff} into \eqref{eq:hot0-IJ}, and choosing
$\varepsilon$ so that the
three coercive terms are absorbed, we obtain
\begin{align}
 &\int_\Omega\left\{\frac12(\theta-2)_+^2
 +u^2(\theta-2)_++\frac{\Lambda_0}{4}u^4\right\}\dd x
 +c\int_0^t\!\int_\Omega
 \{(\theta+u^2)u_x^2+\theta_x^2\}\dd x\dd s\notag\\ &\leq C+C\int_0^t
 \norm{\left(\theta(s)-\frac32\right)_+}_{L^\infty}^2\dd s\notag\\&\leq C+C\int_0^t
 \left(\int_{\{\theta>\frac32\}}|\theta_x|\dd x\right)^2\dd s\notag\\&\leq C+C\int_0^t
 \left(\int_{\{\theta>\frac32\}}v\theta\dd x\right)
 \left(\int_{\{\theta>\frac32\}}\frac{\theta_x^2}{v\theta}\dd x\right)\dd s\notag\\ &\leq C+C\varepsilon\int_0^t\!\int_\Omega\theta_x^2\dd x\dd s+C_\varepsilon\int_0^t
 \int_{\{\theta>\frac32\}}\frac{\theta_x^2}{v\theta^2}\dd x \dd s.
                                                        \label{eq:hotid0}
\end{align}
Decrease $\varepsilon>0$, if necessary, so that
$C\varepsilon\int_0^t\!\int_\Omega\theta_x^2\dd x\dd s$ is absorbed into
the left-hand side. The remaining term is controlled by
\eqref{eq:ent-id}, and \eqref{eq:hot-0} follows.
\end{proof}

We next estimate the first derivative of the specific volume.

\begin{lemma}\label{l:vx}
For every fixed $\beta\geq0$,
\begin{equation}
 \sup_{0\leq t\leq T}\int_\Omega v_x^2\dd x
 +\int_0^T\!\int_\Omega v_x^2\dd x\dd t\leq C.
                                                               \label{eq:vx}
\end{equation}
\end{lemma}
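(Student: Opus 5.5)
The plan is the standard Kazhikhov-type effective-flux estimate for $v_x$, with the viscous primitive $M_\alpha(v)$ in place of $\log v$. Since $(M_\alpha(v))_t=\mu u_x/v$, the momentum equation \eqref{eq:mom} can be written as
\[
\Bigl(\frac{\mu(v)v_x}{v}-u\Bigr)_t=\Bigl(\frac{\theta}{v}+\frac{|\mathbf{b}|^2}{2}\Bigr)_x,
\qquad \frac{\mu(v)v_x}{v}=(M_\alpha(v))_x .
\]
We set $\omega=\mu(v)v_x/v-u$. By \eqref{eq:state} we have $c\leq\mu/v\leq C$, so $\|v_x\|_{L^2}$ and $\|\omega\|_{L^2}+\|u\|_{L^2}$ are equivalent. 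Also $\|u\|_{L^2}\leq C$ by \eqref{eq:ent-id}. At $t=0$, $\|\omega(0)\|_{L^2}\leq C$ by the $H^1$ assumption on the data.

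We multiply the equation for $\omega$ by $\omega$ and integrate over $\Omega$. The boundary terms vanish: on the half-line $u(0,t)=0$ forces $u_t(0,t)=0$, which makes the identity compatible, and the flux terms are handled by the same cutoff/approximation as in Appendix~\ref{app:energy}. Expanding the pressure derivative gives
\[
\frac{\theta}{v}\Bigl(\frac{\theta}{v}\Bigr)_x\text{-part: }\;-\frac{\theta v_x}{v^2}\,\omega
=-\frac{\theta}{v\mu}\,\omega^2-\frac{\theta}{v\mu}\,u\,\omega ,
\]
and the first term is a good damping term since $\theta\geq c$ and $v,\mu\leq C$. This yields
\[
\frac12\frac{\dd}{\dd t}\|\omega\|_{L^2}^2+\int_\Omega\frac{\theta}{v\mu}\omega^2\dd x
\leq \int_\Omega\Bigl(\frac{\theta_x}{v}+\mathbf{b}\cdot\mathbf{b}_x\Bigr)\omega\dd x+C\int_\Omega\theta|u||\omega|\dd x .
\]
The magnetic term is bounded by $\varepsilon\|\omega\|_{L^2}^2+C_\varepsilon\|\mathbf{b}\|_{L^\infty}^2\|\mathbf{b}_x\|_{L^2}^2$, and the right-hand factor is integrable in time by \eqref{eq:b-inf-prod}. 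The term with $u$ is split on $\{\theta\leq2\}$ and $\{\theta>2\}$. On $\{\theta\leq2\}$ it is at most $\varepsilon\|\omega\|^2+C\|u\|_{L^2}^2$, but $\int_0^T\|u\|_{L^2}^2$ is not obviously bounded. I would instead use $\|u\|^2_{L^2}\leq C$ together with $\|u\|_{L^\infty}^2\leq C\|u_x\|_{L^2}$, writing $\int\theta|u||\omega|\leq \|u\|_{L^\infty}\|\theta\|_{L^2}\|\omega\|_{L^2}$ with $\|\theta\|_{L^2}$ controlled on bounded-temperature and hot regions. This gives the form $C\|u_x\|_{L^2}^2+\|(\theta-\frac32)_+\|_{L^\infty}^2\|\omega\|^2$-type terms, which fit Gronwall.

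The heat-flux term $\int\theta_x\omega/v$ is the main obstacle. For $\beta>0$, I would bound it by $\varepsilon\int\theta\omega^2+C_\varepsilon\int\theta_x^2/\theta$, which is integrable by \eqref{eq:cappos}. For $\beta=0$, I would bound it by $\varepsilon\|\omega\|^2+C\|\theta_x\|_{L^2}^2$, integrable by \eqref{eq:hot-0}. The hot-region factors $\|(\theta-2)_+\|_{L^\infty}^2$, which are time-integrable by \eqref{eq:hot-0} (and for $\beta>0$ by the analogous bound from \eqref{eq:cappos} and \eqref{eq:hot}), appear as Gronwall coefficients.

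The result is
\[
Y'+cY\leq a(t)Y+f(t),\qquad Y=\|\omega\|_{L^2}^2,
\]
with $a,f\in L^1(0,\infty)$ uniformly in $T$. Gronwall then gives $\sup_t Y+\int_0^T Y\,\dd t\leq C$. Translating back through $v_x=v(\omega+u)/\mu$ gives the supremum bound in \eqref{eq:vx}. For the time integral, $\int_0^T\|u\|_{L^2}^2$ is not needed: the damping term $\int_0^T\int\omega^2$ is controlled, and $\int_0^T\|u_x\|^2$ bounds the $u$ contribution only through the time-integrable pieces, so I would write $v_x\approx v\omega/\mu$ plus $vu/\mu$. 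The point I would check most carefully is the pressure cross term $\theta u\omega/(v\mu)$: its time integrability must rest on $\|u\|_{L^\infty}^2\leq C\|u_x\|_{L^2}$ from \eqref{eq:GN-infty}, with $\|u_x\|^2_{L^2}$ integrable, rather than on $\|u\|_{L^2}^2$.
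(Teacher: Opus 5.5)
There is a genuine gap, and it is at the pressure cross term you flagged. Testing $\omega_t=(P_*)_x$ with $\omega=q-u$, where $q=\mu v_x/v$, puts the damping on $\omega$ and leaves the term $-\int_\Omega\frac{\theta}{v\mu}u\omega\dd x$.

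On the set where $\theta$ and $v$ are close to $1$, this term is essentially $\int_\Omega u\omega\dd x$, and that set has infinite measure. The only available control there is $\norm{u}_{L^2}\norm{\omega}_{L^2}$. Your proposed fix $\norm{u}_{L^\infty}\norm{\theta}_{L^2}\norm{\omega}_{L^2}$ fails, because $\theta\to1$ at infinity, so $\theta\notin L^2(\Omega)$.

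The best you can reach is $Y'+cY\leq C+f(t)$ with $f\in L^1(0,\infty)$, where the constant $C$ comes from $\sup_t\norm{u}_{L^2}^2$. This gives $\sup_tY\leq C$, but only $\int_0^TY\dd t\leq C(1+T)$.

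A uniform bound on $\int_0^T\norm{\omega}_{L^2}^2\dd t$ would not be enough either. Writing $v_x=v(\omega+u)/\mu$ forces you to control $\int_0^T\norm{u}_{L^2}^2\dd t$, and this is not available uniformly in $T$. Your claim that it ``is not needed'' is incorrect, since $\norm{vu/\mu}_{L^2}\geq c\norm{u}_{L^2}$. For the same reason, $\norm{v_x}_{L^2}$ is not equivalent to $\norm{\omega}_{L^2}+\norm{u}_{L^2}$; only the bound $\norm{v_x}_{L^2}\leq C(\norm{\omega}_{L^2}+\norm{u}_{L^2})$ holds. The $T$-uniform time integral in \eqref{eq:vx} is essential later, when \eqref{eq:ux-est} is integrated in time and in Proposition~\ref{p:dec}.

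The missing idea is to test the same equation $q_t-u_t=(P_*)_x$ with $q$ rather than with $q-u$, and to move the time derivative off $u$. Since $q_t=(\mu u_x/v)_x$,
\[
-\int_\Omega u_tq\dd x=-\frac{\dd}{\dd t}\int_\Omega uq\dd x-\int_\Omega\frac{\mu u_x^2}{v}\dd x .
\]
This is the paper's route, and it has three effects:
\begin{enumerate}
\item The damping becomes $\int_\Omega\theta q^2/(\mu v)\dd x\geq c\norm{v_x}_{L^2}^2$, acting directly on $v_x$.
\item The functional $\int_\Omega(\frac12q^2-uq)\dd x\geq\frac14\norm{q}_{L^2}^2-C$ needs only $\sup_t\norm{u}_{L^2}$.
\item The non-integrable cross term is replaced by $\int_\Omega\mu u_x^2/v\dd x$, which is time-integrable by \eqref{eq:cap}.
\end{enumerate}

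Your treatment of the $\theta_x$ and $\mathbf{b}\cdot\mathbf{b}_x$ terms via \eqref{eq:prehi} carries over unchanged. A direct time integration then gives both parts of \eqref{eq:vx}, with no Gronwall argument and no hot-region coefficients.

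Equivalently, subtract the kinetic-energy identity $\frac12\frac{\dd}{\dd t}\norm{u}_{L^2}^2+\int_\Omega\mu u_x^2/v\dd x=-\int_\Omega u(P_*)_x\dd x$ from your $\omega$-identity. Since $\frac12\norm{\omega}_{L^2}^2-\frac12\norm{u}_{L^2}^2=\int_\Omega(\frac12q^2-uq)\dd x$, this yields exactly the paper's identity \eqref{eq:lj09}.
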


\begin{proof}
Combining \eqref{eq:state}, \eqref{eq:cap}, and
\eqref{eq:b-inf-prod} with
\eqref{eq:cappos} when $\beta>0$, and with \eqref{eq:hot-0} when
$\beta=0$, yields
\begin{align}
 \int_0^T\!\int_\Omega\biggl\{&
 \mu u_x^2+|\mathbf{w}_x|^2+|\mathbf{b}_x|^2
 +\frac{\theta_x^2}{ \theta}+|\mathbf{b}|^2|\mathbf{b}_x|^2\biggr\}
 \dd x\dd t\leq C.                                  \label{eq:prehi}
\end{align}
The Cauchy--Schwarz inequality, \eqref{eq:hot}, and \eqref{eq:state}
give
\[
 \norm{(\theta(t)-2)_+}_{L^\infty}^2
 \leq\left(\int_{\{\theta>2\}}|\theta_x|\dd x\right)^2
 \leq C\int_\Omega\frac{\theta_x^2}{\theta}\dd x.
\]
Integration in time and \eqref{eq:prehi} yield
\begin{equation}
 \int_0^T
 \norm{(\theta(t)-2)_+}_{L^\infty}^2 \dd t\leq C.
                                                        \label{eq:excurs}
\end{equation}

Set
\begin{equation}\label{eq:qdef}
 q=M_\alpha(v)_x=\frac{\mu v_x}{v}.
\end{equation}
Using $u_x=v_t$ in \eqref{eq:mom}, we obtain
\begin{equation}\label{eq:q-evolution}
 q_t-u_t=-\frac{\theta v_x}{v^2}+\frac{\theta_x}{v}+\mathbf{b}\cdot \mathbf{b}_x.
\end{equation}
Multiplying \eqref{eq:q-evolution} by $q$ and integrating over $\Omega$, we
integrate the $u_tq$ term by parts in time. Since
\[
 -\int_\Omega u\,q_t\,\dd x
 =-\int_\Omega u\left(\frac{\mu u_x}{v}\right)_x\,\dd x
 =\int_\Omega\frac{\mu u_x^2}{v}\,\dd x
\]
we obtain
\begin{align}
 &\frac{\dd}{\dd t}\int_\Omega
 \left\{\frac12|q|^2-uq\right\}\dd x
 +\int_\Omega\frac{\theta|q|^2}{\mu v}\dd x\notag\\&=\int_\Omega\left\{
 \frac{\mu u_x^2}{v}+\frac{\theta_x q}{v}
 +q\,\mathbf{b}\cdot \mathbf{b}_x\right\}\dd x\\
 &\leq\frac12\int_\Omega
 \frac{\theta|q|^2}{\mu v}\dd x
 +C\int_\Omega\left\{u_x^2+\frac{\theta_x^2}{v\theta}
 +|\mathbf{b}|^2|\mathbf{b}_x|^2\right\}\dd x, \label{eq:lj09}
\end{align}
The spatial boundary term vanishes because $u=0$ at $x=0$ in the
half-line cases and by the far-field conditions on $\mathbb R$. The
final estimate in \eqref{eq:lj09} follows from
\eqref{eq:state} and Young's inequality.

Moreover, \eqref{eq:ent-id} gives
\[
 \int_\Omega\left\{\frac12|q|^2
 -uq\right\}\dd x
 \geq\frac14\int_\Omega|q|^2\dd x
 -C\int_\Omega u^2\dd x
 \geq\frac14\int_\Omega|q|^2\dd x-C.
\]
Integrating \eqref{eq:lj09} over $(0,t)$ and using this lower bound
together with \eqref{eq:init}, \eqref{eq:state}, and
\eqref{eq:prehi}, we find
\[
 \sup_{0\leq t\leq T}\int_\Omega|q|^2\dd x
 +\int_0^T\!\int_\Omega|q|^2\dd x\dd t\leq C.
\]
Since $v_x=vq/\mu$ and \eqref{eq:state} gives finite upper and positive
lower bounds for both $v$ and $\mu$, this proves \eqref{eq:vx}.

\end{proof}

We now estimate the first and second derivatives of the longitudinal
velocity. The estimate of $\theta_x$ is coupled to this argument and
requires separate treatments of $\beta=0$, $0<\beta<2$, and
$\beta\geq2$.

\begin{proposition}\label{p:long}
For every fixed $\beta\geq0$,
\begin{align}
 &\sup_{0\leq t\leq T}\int_\Omega u_x^2\dd x+\int_0^T\!\int_\Omega
 \{u_{xx}^2+u_t^2+\theta_x^2\}\dd x\dd t\leq C.
                                                               \label{eq:long}
\end{align}
Moreover,
\begin{align}
 \int_0^T\biggl[&
 \norm{u_x(t)}_{L^\infty}^2+\norm{\mathbf{w}_x(t)}_{L^\infty}^2
 +\norm{\mathbf{b}_x(t)}_{L^\infty}^2\notag\\
 &+\{\norm{u_x(t)}_{L^\infty}^2+\norm{\mathbf{w}_x(t)}_{L^\infty}^2
 +\norm{\mathbf{b}_x(t)}_{L^\infty}^2\}^2\biggr]\dd t\leq C.
                                                               \label{eq:gmax}
\end{align}
\end{proposition}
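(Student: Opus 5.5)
The plan is to multiply the momentum equation \eqref{eq:mom} by $-u_{xx}$ and couple the result with a temperature estimate, since $\theta$ has no uniform upper bound yet. Writing \eqref{eq:mom} as $u_t=\sigma_x$ with $\sigma=\mu u_x/v-P_*$, testing with $-u_{xx}$ gives
\[
\frac12\frac{\dd}{\dd t}\int_\Omega u_x^2\dd x+\int_\Omega\frac{\mu u_{xx}^2}{v}\dd x
=\int_\Omega\Bigl\{-\Bigl(\frac{\mu}{v}\Bigr)_x u_x+\Bigl(\frac\theta v\Bigr)_x+\mathbf{b}\cdot\mathbf{b}_x\Bigr\}u_{xx}\dd x .
\]
The boundary term at $x=0$ vanishes because $u=0$, hence $u_t=0$ there. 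Using \eqref{eq:state} and \eqref{eq:state-mu-derivative}, we have $|(\mu/v)_x|\le C|v_x|$. Combined with \eqref{eq:vx} and $\norm{u_x}_{L^\infty}^2\le C\norm{u_x}_{L^2}\norm{u_{xx}}_{L^2}$, this gives
\[
\int_\Omega|v_x||u_x||u_{xx}|\dd x\le \varepsilon\norm{u_{xx}}_{L^2}^2+C\norm{u_x}_{L^2}^6 .
\]
The factor $\norm{u_x}_{L^2}^4$ is handled by Gronwall, because $\int_0^T\norm{u_x}_{L^2}^2\dd t\le C$ by \eqref{eq:cap}. The magnetic term is controlled by \eqref{eq:b-inf-prod}. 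The pressure term splits as $\theta_x/v-\theta v_x/v^2$. The delicate piece is $\int_\Omega \theta^2 v_x^2\dd x\le \norm{\theta}_{L^\infty}^2\norm{v_x}_{L^2}^2$, and here we need $\int_0^T\norm{\theta}_{L^\infty}^2\dd t$ bounded together with $\int_0^T\norm{\theta_x}_{L^2}^2\dd t$. The first follows from \eqref{eq:excurs}, since $\norm{\theta}_{L^\infty}\le 2+\norm{(\theta-2)_+}_{L^\infty}$.

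The main obstacle is therefore the bound $\int_0^T\int_\Omega\theta_x^2\dd x\dd t\le C$, which is where the case split enters. For $\beta=0$ it is exactly \eqref{eq:hot-0}. For $0<\beta<2$, I would test \eqref{eq:temp} with $(\theta-2)_+$ as in Proposition~\ref{p:hot0}. The heat flux then produces $\int\theta^\beta\theta_x^2/v\ge c\int_{\{\theta>2\}}\theta_x^2$, while the region $\theta\le2$ is controlled by \eqref{eq:cappos}. The source terms $\mu u_x^2(\theta-2)_+$ and $\theta u_x(\theta-2)_+$ are absorbed through the $u^2(\theta-2)_+$ and $u^4$ coupling of \eqref{eq:u-theta-0}--\eqref{eq:u3-0}, with $\norm{(\theta-3/2)_+}_{L^\infty}^2$ estimated via $\theta_x^2/\theta$. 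For $\beta\ge2$, the entropy dissipation already gives $\theta^{\beta-2}\theta_x^2\ge c\theta_x^2$ on $\{\theta\ge1\}$, and $\theta\ge c$ handles the rest, so \eqref{eq:ent-id} alone suffices. Should the $0<\beta<2$ argument fail to close on its own, I would instead carry it out simultaneously with the $u_x$ estimate. In that case the term $\int u_x^2(\theta-2)_+$ is bounded by $\norm{u_x}_{L^2}\norm{u_x}_{L^\infty}\norm{(\theta-2)_+}_{L^2}$, and the $u_{xx}$ dissipation absorbs it.

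Having all inputs, integrate in time. Gronwall gives $\sup_t\norm{u_x}_{L^2}^2+\int_0^T\norm{u_{xx}}_{L^2}^2\dd t\le C$, and $u_t^2$ is then read off from \eqref{eq:mom}.

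For \eqref{eq:gmax}, apply \eqref{eq:GN-infty}:
\[
\int_0^T\norm{u_x}_{L^\infty}^4\dd t\le C\sup_t\norm{u_x}_{L^2}^2\int_0^T\norm{u_{xx}}_{L^2}^2\dd t\le C,
\]
and the $L^2$-in-time bound follows similarly. For $\mathbf{w}_x$ and $\mathbf{b}_x$, combine \eqref{eq:tr-max} with $\mathbf{w}_{xx}=v(r_w)_x+v_xr_w$ and the bound on $v_x$, in the same way as for $u$.
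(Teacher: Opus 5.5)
Your skeleton matches the paper's. You test the momentum equation with $-u_{xx}$ and split into $\beta=0$, $\beta\geq2$, $0<\beta<2$. In the last case you couple the $(\theta-2)_+$ heat estimate \eqref{eq:therm} to the $u_x$ estimate, and your \emph{fallback} for $0<\beta<2$ is exactly the paper's argument.

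Your primary route for $0<\beta<2$ does not close. With $\kappa=\theta^\beta$, the term $\int_{\{\theta>2\}}u^2\theta_t\dd x$ of \eqref{eq:u-theta-0} becomes $-2\int_{\{\theta>2\}}\theta^\beta uu_x\theta_x v^{-1}\dd x$ once you insert the heat equation and integrate by parts. Its Young remainder $\int_{\{\theta>2\}}\theta^\beta u^2u_x^2\dd x$ is not dominated by the available coercive terms $\int_\Omega(\theta-2)_+u_x^2\dd x$ and $\int_\Omega u^2u_x^2\dd x$ (consider $\theta\approx u^2$ large), and no upper bound for $\theta$ is known yet. The paper uses that coupling only for $\beta=0$.

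In the fallback, bound the heat sources via \eqref{eq:hot}, as the paper does:
\[
\int_\Omega\frac{(\theta-2)_+(\mu u_x^2+|\mathbf{w}_x|^2+|\mathbf{b}_x|^2)}{v}\dd x\leq C\bigl(\norm{u_x}_{L^\infty}^2+\norm{\mathbf{w}_x}_{L^\infty}^2+\norm{\mathbf{b}_x}_{L^\infty}^2\bigr),\qquad
\int_\Omega\frac{\theta|u_x|(\theta-2)_+}{v}\dd x\leq C\norm{u_x}_{L^\infty}\norm{(\theta-2)_+}_{L^\infty}.
\]
Then use \eqref{eq:tr-max}, \eqref{eq:excurs} and $\norm{u_x}_{L^\infty}^2\leq\varepsilon\norm{u_{xx}}_{L^2}^2+C_\varepsilon\norm{u_x}_{L^2}^2$. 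Multiply \eqref{eq:therm} by a large $\Lambda_1$ so that $c\Lambda_1\int_{\{\theta>2\}}\theta_x^2\dd x$ absorbs the $C\int_{\{\theta>2\}}\theta_x^2\dd x$ contained in \eqref{eq:ux-est}, and only then fix $\varepsilon$.

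Two further steps fail as written.

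First, $\int_0^T\norm{\theta(t)}_{L^\infty}^2\dd t$ is \emph{not} bounded uniformly in $T$. Since $\theta\to1$ at infinity, $\norm{\theta(t)}_{L^\infty}\geq1$, and \eqref{eq:excurs} only controls the excess over $2$. This breaks the $T$-independence that is the whole content of \eqref{eq:long}. Use instead
\[
\int_\Omega\theta^2v_x^2\dd x\leq8\norm{v_x}_{L^2}^2+2\norm{(\theta-2)_+}_{L^\infty}^2\norm{v_x}_{L^2}^2 .
\]
Handle the first term with the time-integrated half of \eqref{eq:vx}, and the second with its $\sup_t$ half together with \eqref{eq:excurs}.

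Second, since $\sup_t\norm{v_x}_{L^2}\leq C$, the commutator term is at most $C\norm{u_x}_{L^2}^{1/2}\norm{u_{xx}}_{L^2}^{3/2}\leq\varepsilon\norm{u_{xx}}_{L^2}^2+C_\varepsilon\norm{u_x}_{L^2}^2$. This is integrable in time with no Gronwall argument. Your bound $C\norm{u_x}_{L^2}^6$ is not valid when $\norm{u_x}_{L^2}$ is small. Even if it held, $y'\leq Cy^3$ with only $\int_0^Ty\dd t\leq C$ cannot be closed by Gronwall, because blow-up solutions of $y'=Cy^3$ have finite $\int y\dd t$.

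Finally, the $\mathbf{w}_x,\mathbf{b}_x$ parts of \eqref{eq:gmax} need no detour through $\mathbf{w}_{xx}$; they are already contained in \eqref{eq:tr-max}.
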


\begin{proof}
Expanding \eqref{eq:mom}, we obtain
\begin{equation}
 u_t=\frac{\mu u_{xx}}{v}+\frac{\mu_xu_x}{v}-\frac{\mu u_xv_x}{v^2}-\frac{\theta_x}{v}
 +\frac{\theta v_x}{v^2}-\mathbf{b}\cdot \mathbf{b}_x.                \label{eq:momexp}
\end{equation}
Multiplying \eqref{eq:momexp} by $-u_{xx}$, integrating over
$\Omega$, and integrating the $u_tu_{xx}$ term by parts give
\begin{equation}\label{eq:ux-energy}
\begin{aligned}
 \frac12\frac{\dd}{\dd t}\int_\Omega u_x^2\dd x
 +\int_\Omega\frac{\mu u_{xx}^2}{v}\dd x=\int_\Omega\left\{
 \frac{\mu u_xv_xu_{xx}}{v^2}+\frac{\theta_xu_{xx}}{v}
 -\frac{\theta v_xu_{xx}}{v^2}
 +(\mathbf{b}\cdot \mathbf{b}_x)u_{xx}-\frac{\mu'(v)v_xu_xu_{xx}}v\right\}\dd x.
\end{aligned}
\end{equation}
The boundary contribution from $u_tu_x$ vanishes: in the half-line
cases, $u(0,t)=0$ implies $u_t(0,t)=0$, while on the whole line the
far-field conditions apply. Since $\mu_x=\mu'(v)v_x$,
\eqref{eq:state}, \cref{l:vx}, and \eqref{eq:GN} imply
\begin{align*}
 &\left|\int_\Omega\frac{\mu u_xv_xu_{xx}}{v^2}\dd x\right|
 +\left|\int_\Omega\frac{\mu'(v)v_xu_xu_{xx}}v\dd x\right|\\
 \leq C\norm{v_x}_{L^2}\norm{u_x}_{L^\infty}\norm{u_{xx}}_{L^2}
 \leq\varepsilon\norm{u_{xx}}_{L^2}^2
       +C_\varepsilon\norm{u_x}_{L^2}^2.
\end{align*}
The remaining terms are estimated, for every $\varepsilon>0$, by
\eqref{eq:state}, \eqref{eq:GN}, and Young's inequality:
\begin{align*}
 \left|\int_\Omega\frac{\theta_xu_{xx}}{v}\dd x\right|
 &\leq\varepsilon\norm{u_{xx}}_{L^2}^2
 +C_\varepsilon\int_\Omega\theta_x^2\dd x,\\
 \left|\int_\Omega\frac{\theta v_xu_{xx}}{v^2}\dd x\right|
 &\leq\varepsilon\norm{u_{xx}}_{L^2}^2
 +C_\varepsilon\int_\Omega\theta^2v_x^2\dd x,\\
 \left|\int_\Omega(\mathbf{b}\cdot \mathbf{b}_x)u_{xx}\dd x\right|
 &\leq \norm{u_{xx}}_{L^2} \norm{\mathbf{b}}_{L^\infty} \norm{\mathbf{b}_{x}}_{L^2}
  \leq\varepsilon\norm{u_{xx}}_{L^2}^2
 +C_\varepsilon\int_\Omega |\mathbf{b}_x|^2\dd x.
\end{align*}
Substitution into the energy identity, followed by the choice of a
sufficiently small $\varepsilon$, yields
\begin{align}
 \frac{\dd}{\dd t}\int_\Omega u_x^2\dd x
 +c\int_\Omega u_{xx}^2\dd x
 &\leq C \norm{u_x(t)}_{L^2}^2
 +C\int_\Omega\theta_x^2\dd x+C\norm{v_x(t)}_{L^2}^2\notag\\
 &\quad+C\norm{(\theta(t)-2)_+}_{L^\infty}^2
 +C\int_\Omega |\mathbf{b}_x|^2\dd x.
                                                        \label{eq:ux-est}
\end{align}
Here $\theta\leq2+(\theta-2)_+$ and \cref{l:vx} give
\[
 \int_\Omega\theta^2v_x^2\dd x \leq C\norm{v_x(t)}_{L^2}^2
 +C\norm{(\theta(t)-2)_+}_{L^\infty}^2\norm{v_x(t)}_{L^2}^2 \leq C\norm{v_x(t)}_{L^2}^2
 +C\norm{(\theta(t)-2)_+}_{L^\infty}^2.
\]

We first prove, for every fixed $\beta\geq0$, that
\begin{align}
 \sup_{0\leq t\leq T}\int_\Omega u_x^2\dd x
 +\int_0^T\!\int_\Omega
 \{u_{xx}^2+\theta_x^2\}\dd x\dd t\leq C. \label{eq:lj10}
\end{align}

\noindent\textit{The cases $\beta=0$ and $\beta\geq2$.}\quad
For $\beta=0$, \eqref{eq:hot-0} gives
$\int_0^T\!\int_\Omega\theta_x^2\dd x\dd t\leq C$.
For $\beta\geq2$, \eqref{eq:state} and the temperature dissipation in
\eqref{eq:ent-id} give
\[
 \int_0^T\!\int_\Omega\theta_x^2\dd x\dd t
 \leq C\int_0^T\!\int_\Omega
 \frac{\theta^{\beta-2}\theta_x^2}{v}\dd x\dd t\leq C.
\]
Integrating \eqref{eq:ux-est} over $(0,t)$ and using
\eqref{eq:prehi}, Lemma~\ref{l:vx}, \eqref{eq:excurs}, and
\eqref{eq:b-inf}, we choose the corresponding $\varepsilon$ so that
the resulting multiple of
$\int_0^t\norm{u_{xx}}_{L^2}^2\dd s$ is absorbed. Hence
\begin{align*}
 \sup_{0\leq t\leq T}\norm{u_x(t)}_{L^2}^2
 +\int_0^T\norm{u_{xx}(t)}_{L^2}^2\dd t\leq C,
 \qquad \beta=0\quad\text{or}\quad\beta\geq2.
\end{align*}

\noindent\textit{The case $0<\beta<2$.}\quad
Here the entropy dissipation controls $\theta_x$ only on
$\{\theta\leq2\}$. Testing \eqref{eq:temp} with $(\theta-2)_+$ and
integrating over $\Omega$ give
\begin{align}
 &\frac12\frac{\dd}{\dd t}\int_\Omega(\theta-2)_+^2\dd x
 +\int_{\{\theta>2\}}\frac{\theta^\beta\theta_x^2}{v}\dd x=-\int_\Omega\frac{\theta u_x}{v}(\theta-2)_+\dd x
 +\int_\Omega\frac{(\theta-2)_+
 (\mu u_x^2+|\mathbf{w}_x|^2+|\mathbf{b}_x|^2)}{v}\dd x.
                                                               \label{eq:therm}
\end{align}
The boundary heat-flux term vanishes as in the proof of
Proposition~\ref{p:hot0}. By \eqref{eq:hot} and \eqref{eq:state},
\[
 \int_\Omega(\theta-2)_+\dd x\leq C.
\]
In addition,
\[
 \int_\Omega\theta(\theta-2)_+\dd x
 \leq C\norm{(\theta(t)-2)_+}_{L^\infty}.
\]
Therefore, Young's inequality and \eqref{eq:GN} give, for every
$\varepsilon>0$,
\begin{align*}
 &\left|\int_\Omega\frac{\theta u_x}{v}(\theta-2)_+\dd x\right|
 +\int_\Omega\frac{(\theta-2)_+
 (\mu u_x^2+|\mathbf{w}_x|^2+|\mathbf{b}_x|^2)}{v}\dd x\\
 &\quad\leq C\{\norm{u_x(t)}_{L^\infty}^2+\norm{\mathbf{w}_x(t)}_{L^\infty}^2
 +\norm{\mathbf{b}_x(t)}_{L^\infty}^2\}
 +C\norm{u_x(t)}_{L^\infty}\norm{(\theta(t)-2)_+}_{L^\infty}\\
 &\quad\leq\varepsilon\norm{u_{xx}(t)}_{L^2}^2
 +C_\varepsilon\norm{u_x(t)}_{L^2}^2
 +C\{\norm{\mathbf{w}_x(t)}_{L^\infty}^2+\norm{\mathbf{b}_x(t)}_{L^\infty}^2\}
 +C\norm{(\theta(t)-2)_+}_{L^\infty}^2.
\end{align*}

Multiply \eqref{eq:therm} by $2\Lambda_1$, where $\Lambda_1$ is fixed
sufficiently large, and add the result to \eqref{eq:ux-est}. Since
$\theta^\beta/v$ is bounded below on $\{\theta>2\}$, choosing
$\varepsilon$ sufficiently small yields
\begin{align}
 &\frac{\dd}{\dd t}\left\{
 \norm{u_x(t)}_{L^2}^2+\Lambda_1\int_\Omega(\theta-2)_+^2\dd x\right\}
 +c\int_\Omega u_{xx}^2\dd x
 +c\int_{\{\theta>2\}}\theta_x^2\dd x\notag\\
 &\quad\leq  C\biggl\{
 \norm{u_x(t)}_{L^2}^2
 +\int_{\{\theta\leq2\}}\theta_x^2\dd x
 + \norm{(\theta(t)-2)_+}_{L^\infty}^2 +\norm{v_x(t)}_{L^2}^2\notag\\
 &\qquad+\norm{\mathbf{b}_x  }_{L^2}^2
 +\norm{\mathbf{w}_x }_{L^\infty}^2+\norm{\mathbf{b}_x }_{L^\infty}^2\biggr\}.
                                                               \label{eq:uxheat}
\end{align}
Integrating \eqref{eq:uxheat} over $(0,t)$ and using
\eqref{eq:prehi}, \eqref{eq:excurs}, \cref{l:vx}, and
\eqref{eq:tr-max}, we absorb the
remaining multiple of $\int_0^t\norm{u_{xx}}_{L^2}^2\dd s$ and obtain
\begin{align*}
 \sup_{0\leq t\leq T}\norm{u_x(t)}_{L^2}^2
 +\int_0^T\norm{u_{xx}(t)}_{L^2}^2\dd t+\int_0^T\!\int_{\{\theta>2\}}
 \theta_x^2\dd x\dd t\leq C.
\end{align*}
On $\{\theta\leq2\}$, \eqref{eq:state} and \eqref{eq:prehi} control
$\theta_x^2$. Thus \eqref{eq:lj10} holds for all three ranges of
$\beta$.

\noindent\textit{The time derivative and pointwise gradients.}\quad
The one-dimensional inequality \eqref{eq:GN-infty} and
\eqref{eq:lj10} give
\begin{equation}\label{eq:ux-inf-int}
 \int_0^T\norm{u_x(t)}_{L^\infty}^2\dd t
 \leq C\left(\int_0^T\norm{u_x(t)}_{L^2}^2\dd t\right)^{\frac12}
 \left(\int_0^T\norm{u_{xx}(t)}_{L^2}^2\dd t\right)^{\frac12}\leq C.
\end{equation}
\begin{equation}\label{eq:ux-inf-four}
 \int_0^T\norm{u_x(t)}_{L^\infty}^4\dd t
 \leq C\sup_{0\leq t\leq T}\norm{u_x(t)}_{L^2}^2
 \int_0^T\norm{u_{xx}(t)}_{L^2}^2\dd t\leq C.
\end{equation}
Since $\mu_x=\mu'(v)v_x$, \eqref{eq:state}, \cref{l:vx}, and
\eqref{eq:ux-inf-int} give
\begin{equation}\label{eq:muux}
 \int_0^T\!\int_\Omega|\mu_xu_x|^2\dd x\dd t
 \leq C\sup_{t\leq T}\norm{v_x}_{L^2}^2
       \int_0^T\norm{u_x}_{L^\infty}^2\dd t\leq C.
\end{equation}
The same bounds give
\begin{equation}\label{eq:ux-vx-product}
 \int_0^T\!\int_\Omega u_x^2v_x^2\dd x\dd t
 \leq\sup_{0\leq t\leq T}\norm{v_x(t)}_{L^2}^2
 \int_0^T\norm{u_x(t)}_{L^\infty}^2\dd t\leq C.
\end{equation}
By \eqref{eq:excurs} and Lemma~\ref{l:vx},
\begin{equation}\label{eq:theta-vx-product}
 \int_0^T\!\int_\Omega\theta^2v_x^2\dd x\dd t
 \leq C\int_0^T\{\norm{v_x(t)}_{L^2}^2
 +\norm{(\theta(t)-2)_+}_{L^\infty}^2\}\dd t\leq C.
\end{equation}
Using \eqref{eq:momexp}, \eqref{eq:muux},
\eqref{eq:ux-vx-product}, \eqref{eq:theta-vx-product},
\eqref{eq:b-inf-prod}, and \eqref{eq:lj10}, we obtain
\begin{equation}\label{eq:ut-bound}
 \int_0^T\!\int_\Omega u_t^2\dd x\dd t
 \leq C\int_0^T\!\int_\Omega
 \{u_{xx}^2+u_x^2v_x^2+\theta_x^2+\theta^2v_x^2
 +|\mathbf{b}|^2|\mathbf{b}_x|^2+|\mu_xu_x|^2\}\dd x\dd t\leq C.
\end{equation}
Equations \eqref{eq:lj10} and \eqref{eq:ut-bound} prove
\eqref{eq:long}. The two bounds for
$\norm{u_x(t)}_{L^\infty}$ in \eqref{eq:ux-inf-int} and
\eqref{eq:ux-inf-four}, combined with
\eqref{eq:tr-max}, prove \eqref{eq:gmax}.
\end{proof}

It remains to derive the uniform upper bound and the second-order
estimates for the temperature.

\begin{proposition}\label{p:thig}
For every fixed $\beta\geq0$,
\begin{equation}\label{eq:t-up}
 \sup_{(x,t)\in\Omega\times[0,T]}\theta(x,t)\leq C.
\end{equation}
Moreover,
\begin{equation}\label{eq:t-H2}
 \sup_{0\leq t\leq T}\int_\Omega\theta_x^2\dd x
 +\int_0^T\!\int_\Omega
 \{\theta_t^2+\theta_{xx}^2\}\dd x\dd t\leq C.
\end{equation}
\end{proposition}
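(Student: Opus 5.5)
We will prove \eqref{eq:t-H2} first, by an energy estimate for $\theta_x$ (more precisely for a weighted quantity adapted to $\kappa(\theta)=\theta^\beta$). The pointwise bound \eqref{eq:t-up} then follows from the one-dimensional embedding. The natural test is to multiply \eqref{eq:temp}, written in nonconservative form
\[
 \theta_t=\left(\frac{\theta^\beta\theta_x}{v}\right)_x-\frac{\theta u_x}{v}+\frac{\mu u_x^2+|\mathbf{w}_x|^2+|\mathbf{b}_x|^2}{v},
\]
by $-\bigl(\theta^\beta\theta_x/v\bigr)_x$, or equivalently to work with $G=\int_1^\theta s^\beta\dd s$ and estimate $\int_\Omega G_x^2\dd x$. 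Since no upper bound for $\theta$ is yet available, we cannot simply replace $\theta^\beta$ by a constant. So the first step is a preliminary bound: we show
\[
\int_0^T\norm{\theta(t)}_{L^\infty}^{k}\dd t+\sup_t\norm{(\theta-2)_+}_{L^2}\le C
\]
for a suitable $k$, via \eqref{eq:excurs} and \eqref{eq:hot}.

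The energy identity reads, schematically,
\[
 \frac12\frac{\dd}{\dd t}\int_\Omega\frac{\theta^\beta\theta_x^2}{v}\dd x
 +\int_\Omega\frac{1}{\theta^\beta}\left(\frac{\theta^\beta\theta_x}{v}\right)_x^2\dd x
 =\text{(terms with } u_x\theta_x^2,\ \theta u_x,\ u_x^2,\ |\mathbf{w}_x|^2,\ |\mathbf{b}_x|^2\text{)}.
\]
The time-derivative term of the weight, $\beta\theta^{\beta-1}\theta_t\theta_x^2/v$, is the awkward one. To avoid it we use the variable $G$, test with $G_t$, and write $\theta_t=\theta^{-\beta}G_t$. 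This gives
\[
 \frac12\frac{\dd}{\dd t}\int_\Omega\frac{G_x^2}{v}\dd x+\int_\Omega\theta^{-\beta}G_t^2\dd x
 =-\frac12\int_\Omega\frac{u_xG_x^2}{v^2}\dd x+\int_\Omega G_t\,(\text{source})\dd x .
\]
By Young's inequality and the lower bound \eqref{eq:state-theta}, the source terms contribute
\[
 C\theta^\beta\bigl(\theta^2u_x^2+u_x^4+|\mathbf{w}_x|^4+|\mathbf{b}_x|^4\bigr).
\]
We bound $\norm{u_x}_{L^\infty}\norm{G_x}_{L^2}^2$ by Gronwall, using \eqref{eq:gmax}. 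The quartic gradient terms are handled by $\norm{u_x}_{L^\infty}^2\norm{u_x}_{L^2}^2$ together with \eqref{eq:long}, \eqref{eq:gmax} and \eqref{eq:trflux}. The factor $\theta^{\beta+2}$ is controlled by $C+C\norm{\theta}_{L^\infty}^{\beta+2}$.

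Main obstacle: closing the argument requires absorbing $\sup_x\theta^{\beta+2}$. We plan to use
\[
 \norm{G}_{L^\infty}\le C+C\norm{(\theta-2)_+}_{L^\infty}^{\beta+1}
\]
together with the interpolation
\[
 \norm{(G-G(2))_+}_{L^\infty}^2\le C\int_{\{\theta>2\}}G\,|G_x|\dd x\le C\,\norm{G_x}_{L^2}\Bigl(\int_{\{\theta>2\}}\theta^{2\beta+2}\Bigr)^{1/2}.
\]
This makes $\norm{\theta}_{L^\infty}^{\beta+1}\le C(1+\norm{G_x}_{L^2}^{a})$ with some $a<2$, which yields a superlinear but integrable Gronwall structure. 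The coefficient is $\norm{u_x}_{L^2}^2\le C$ times integrable quantities such as $\norm{u_x}_{L^\infty}^2$, so the term becomes $C\,h(t)(1+\norm{G_x}_{L^2}^2)$ with $h\in L^1(0,\infty)$.

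If this interpolation fails to close for large $\beta$, we will first derive $L^p$ bounds for $(\theta-2)_+$ by testing \eqref{eq:temp} with $(\theta-2)_+^{p}$, as in \eqref{eq:therm}. The sources are again controlled by \eqref{eq:gmax}, and these bounds then feed the weight. With $\sup_t\int_\Omega G_x^2/v\le C$ in hand, \eqref{eq:GN-infty} and \eqref{eq:hot} give $\norm{(\theta-2)_+}_{L^\infty}\le C$, which proves \eqref{eq:t-up}. Then $\theta^\beta$ is bounded above and below, $\int_0^T\int_\Omega\theta_t^2\le C$ follows from the $G_t$ dissipation, and $\theta_{xx}$ is recovered from the equation using \eqref{eq:long}, Lemma~\ref{l:vx} and \eqref{eq:gmax}. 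This completes \eqref{eq:t-H2}.
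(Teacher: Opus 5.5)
You share the paper's core test (the temperature equation against $G_t=\theta^\beta\theta_t$, with energy $Y(t)=\int_\Omega G_x^2/v\,\dd x$). The weight $\theta^{\beta+2}$ that you flag as the main obstacle is actually harmless. By \eqref{eq:hot}, every component of $\{\theta>2\}$ has length at most $C$, so $\|(G-G(2))_+\|_{L^\infty}\le CY^{1/2}$. Hence $\int_\Omega\theta^{\beta+2}u_x^2\dd x\le C\|u_x\|_{L^2}^2(1+Y)$, the quartic sources are handled the same way, and all coefficients lie in $L^1(0,\infty)$. That route is a valid substitute for the paper's moment bound \eqref{eq:hot-Lp}.

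The genuine gap is the transport term $-\frac12\int_\Omega u_xG_x^2/v^2\dd x$, which you propose to handle ``by Gronwall, using \eqref{eq:gmax}''. Its coefficient is $\|u_x\|_{L^\infty}$. But \eqref{eq:gmax} controls only $\int_0^T\|u_x\|_{L^\infty}^2$ and $\int_0^T\|u_x\|_{L^\infty}^4$, not $\int_0^T\|u_x\|_{L^\infty}\dd t$, which is merely $O(\sqrt T)$. So Gronwall gives only $Y\le Ce^{C\sqrt T}$. Your later step, integrating the energy inequality to bound $\int_0^T\!\int_\Omega\theta_t^2$, hits the same obstruction, since it needs $\int_0^T\|u_x\|_{L^\infty}Y\dd t\le C$. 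The proposition requires $T$-independent constants; these are what yield \eqref{eq:mainvt}--\eqref{eq:mainds} and the decay in Proposition~\ref{p:dec}. So your argument does not prove it as it stands.

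What is missing is the bound $\int_0^TY\dd t\le C$. With it, write $\|u_x\|_{L^\infty}Y\le(1+\|u_x\|_{L^\infty}^2)Y$. The part $CY$ is then an integrable forcing, and Gronwall closes uniformly in $T$; the paper uses the uniform Gronwall lemma on $[t-1,t]$.

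For $\beta=0$, this bound follows directly from \eqref{eq:long}, and for any $\beta$ the part over $\{\theta\le4\}$ does too. For $\beta>0$, however, you must control $\int_0^T\!\int_{\{\theta>4\}}\theta^{2\beta}\theta_x^2\dd x\dd t$, and none of your preliminary bounds gives this. Note also that \eqref{eq:excurs} and \eqref{eq:hot} only yield $\sup_t\|(\theta-2)_+\|_{L^1}$ and $\int_0^T\|(\theta-2)_+\|_{L^\infty}^2\dd t$, not $\sup_t\|(\theta-2)_+\|_{L^2}$.

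The paper obtains the missing bound by testing the temperature equation with $(\theta-2)_+^{\beta+1}$. The resulting dissipation $\int_{\{\theta>2\}}\theta^\beta(\theta-2)_+^\beta\theta_x^2\dd x$ dominates $\theta^{2\beta}\theta_x^2$ on $\{\theta>4\}$. The sources are absorbed through $\|(\theta-2)_+\|_{L^\infty}^{2\beta+3}\le C\int_{\{\theta>2\}}\theta^\beta(\theta-2)_+^\beta\theta_x^2\dd x$ and Young's inequality with exponent $\frac{2\beta+3}{\beta+3}\in(1,2)$, so that \eqref{eq:gmax} and \eqref{eq:excurs} apply.

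Your fallback of testing with $(\theta-2)_+^p$ is therefore not optional. With $p=\beta+1$ it is required, but for its dissipation term rather than for the $L^p$ bounds you intended it for.
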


\begin{proof}
\noindent\textit{High-temperature moments.}\quad
If $\beta=0$, \eqref{eq:hot-0} and
\eqref{eq:hot} give
\[
 \sup_{0\leq t\leq T}\int_{\{\theta>2\}}(1+\theta^2)\dd x\leq C.
\]
They also give
\[
 \int_0^T\!\int_{\{\theta>4\}}\theta_x^2\dd x\dd t\leq C.
\]

Let $\beta>0$. Testing \eqref{eq:temp} with
$(\theta-2)_+^{\beta+1}$ (cf.\ \cite{LiXu2026}), and using the chain
rule, we obtain the exact identity
\begin{align}
 &\frac1{\beta+2}\frac{\dd}{\dd t}
 \int_\Omega(\theta-2)_+^{\beta+2}\dd x
 +(\beta+1)\int_{\{\theta>2\}}
 \frac{\theta^\beta(\theta-2)_+^\beta\theta_x^2}{v}\dd x\notag\\
 &\quad=-\int_\Omega\frac{\theta u_x}{v}(\theta-2)_+^{\beta+1}\dd x
 +\int_\Omega\frac{\mu u_x^2+|\mathbf{w}_x|^2+|\mathbf{b}_x|^2}{v}
 (\theta-2)_+^{\beta+1}\dd x.
 \label{eq:ht-identity}
\end{align}
By \eqref{eq:hot} and \eqref{eq:state}, the two terms on the right of
\eqref{eq:ht-identity} satisfy
\begin{equation}\label{eq:ht-pressure}
 \left|\int_\Omega\frac{\theta u_x}{v}
 (\theta-2)_+^{\beta+1}\dd x\right|
 \leq C\norm{u_x(t)}_{L^\infty}
 \norm{(\theta(t)-2)_+}_{L^\infty}^{\beta+1}.
\end{equation}
The production term satisfies
\begin{equation}\label{eq:ht-source}
 \int_\Omega\frac{\mu u_x^2+|\mathbf{w}_x|^2+|\mathbf{b}_x|^2}{v}
 (\theta-2)_+^{\beta+1}\dd x
 \leq C\{\norm{u_x(t)}_{L^\infty}^2
 +\norm{\mathbf{w}_x(t)}_{L^\infty}^2+\norm{\mathbf{b}_x(t)}_{L^\infty}^2\}
 \norm{(\theta(t)-2)_+}_{L^\infty}^{\beta}.
\end{equation}
Combining \eqref{eq:ht-identity}, \eqref{eq:ht-pressure}, and
\eqref{eq:ht-source} gives
\begin{align}
 &\frac1{\beta+2}\frac{\dd}{\dd t}
 \int_\Omega(\theta-2)_+^{\beta+2}\dd x
 +c(\beta+1)\int_{\{\theta>2\}}
 \theta^\beta(\theta-2)_+^\beta\theta_x^2\dd x\notag\\&\leq C\{\norm{u_x(t)}_{L^\infty}^2
 +\norm{\mathbf{w}_x(t)}_{L^\infty}^2+\norm{\mathbf{b}_x(t)}_{L^\infty}^2\}
 \norm{(\theta(t)-2)_+}_{L^\infty}^\beta\notag\\
 &\quad+C\norm{u_x(t)}_{L^\infty}
 \norm{(\theta(t)-2)_+}_{L^\infty}^{\beta+1}.
                                                        \label{eq:httest}
\end{align}
The boundary heat-flux term in \eqref{eq:ht-identity} vanishes under
\eqref{eq:bc-C}--\eqref{eq:bc-D}.

On every connected component of $\{\theta>2\}$, the fundamental
theorem of calculus and the Cauchy--Schwarz inequality give
\begin{align}
 \norm{(\theta(t)-2)_+}_{L^\infty}^{2\beta+3}
 &\leq C\left(\int_{\{\theta>2\}}
 \frac{\theta^\beta(\theta-2)_+^\beta\theta_x^2}{v}\dd x\right)
 \left(\int_{\{\theta>2\}}v(\theta-2)_+^{\beta+1}
 \theta^{-\beta}\dd x\right)
 \notag\\
 &\leq C\int_{\{\theta>2\}}
 \theta^\beta(\theta-2)_+^\beta\theta_x^2\dd x.
                                               \label{eq:htmin}
\end{align}
Here we used
$(\theta-2)_+^{\beta+1}\theta^{-\beta}\leq(\theta-2)_+$,
\eqref{eq:state}, and \eqref{eq:hot}; the far-field condition supplies
an endpoint at which $(\theta-2)_+$ vanishes.

By Young's inequality, for every $\varepsilon>0$,
\begin{subequations}\label{eq:hot-young}
\begin{equation}\label{eq:hot-young-source}
\begin{aligned}
 &\{\norm{u_x(t)}_{L^\infty}^2+\norm{\mathbf{w}_x(t)}_{L^\infty}^2
 +\norm{\mathbf{b}_x(t)}_{L^\infty}^2\}
 \norm{(\theta(t)-2)_+}_{L^\infty}^\beta
 \\
 &\quad\leq\varepsilon
 \norm{(\theta(t)-2)_+}_{L^\infty}^{2\beta+3}
 +C_\varepsilon\{\norm{u_x(t)}_{L^\infty}^2
 +\norm{\mathbf{w}_x(t)}_{L^\infty}^2+\norm{\mathbf{b}_x(t)}_{L^\infty}^2\}^{\frac{2\beta+3}{\beta+3}}.
\end{aligned}
\end{equation}
Moreover,
\begin{equation}\label{eq:hot-young-pressure}
 \norm{u_x(t)}_{L^\infty}
 \norm{(\theta(t)-2)_+}_{L^\infty}^{\beta+1}
 \leq\varepsilon
 \norm{(\theta(t)-2)_+}_{L^\infty}^{2\beta+3}
 +C_\varepsilon\{\norm{u_x(t)}_{L^\infty}^2
 +\norm{(\theta(t)-2)_+}_{L^\infty}^2\}.
\end{equation}
\end{subequations}
For \eqref{eq:hot-young-pressure}, distinguish between
$\norm{(\theta(t)-2)_+}_{L^\infty}\leq1$ and
$\norm{(\theta(t)-2)_+}_{L^\infty}>1$.
Since $1<\frac{2\beta+3}{\beta+3}<2$,
\eqref{eq:gmax} gives
\begin{equation}\label{eq:hot-source-L1}
 \{\norm{u_x(t)}_{L^\infty}^2+\norm{\mathbf{w}_x(t)}_{L^\infty}^2
 +\norm{\mathbf{b}_x(t)}_{L^\infty}^2\}^{\frac{2\beta+3}{\beta+3}}
 \in L^1(0,T).
\end{equation}
Its $L^1(0,T)$ norm and the one in \eqref{eq:excurs} are bounded
independently of $T$. Choose
$\varepsilon$ in \eqref{eq:hot-young} so that, by
\eqref{eq:htmin}, both high-temperature terms are absorbed into the
dissipation in \eqref{eq:httest}. Integration in time and
\eqref{eq:init} then give
\begin{align}
 \sup_{0\leq t\leq T}\int_\Omega(\theta-2)_+^{\beta+2}\dd x+\int_0^T\!\int_{\{\theta>2\}}
 \theta^\beta(\theta-2)_+^\beta\theta_x^2\dd x\dd t\leq C.
                                                               \label{eq:htmom}
\end{align}
On $\{\theta>4\}$,
$\frac{\theta}{2}\leq(\theta-2)_+\leq\theta$.
Together with \eqref{eq:hot} and
\eqref{eq:htmom}, this yields
\begin{equation}\label{eq:hot-Lp}
 \sup_{0\leq t\leq T}\int_{\{\theta>2\}}
 \{\theta^\beta+\theta^{\beta+2}\}\dd x\leq C.
\end{equation}
Moreover,
\begin{equation}\label{eq:htgrad}
 \int_0^T\!\int_{\{\theta>4\}}
 \theta^{2\beta}\theta_x^2\dd x\dd t\leq C.
\end{equation}
For $\beta=0$, the same two conclusions follow from
\eqref{eq:hot-0} and \eqref{eq:hot}. Hence \eqref{eq:hot-Lp} and
\eqref{eq:htgrad} hold for every fixed $\beta\geq0$.

\noindent\textit{The weighted time derivative.}\quad
Set
\[
 Y_\beta(t)=\int_\Omega
 \frac{\theta^{2\beta}\theta_x^2}{v}\dd x.
\]
Testing \eqref{eq:temp} with $\theta^\beta\theta_t$
(cf.\ \cite{CaoPengSun2021,TongWangZhang2026} and, for $\beta=0$,
\cite{LuShiXiong2021}) and using
\[
 (\theta^\beta\theta_t)_x=(\theta^\beta\theta_x)_t,
 \qquad v_t=u_x,
\]
give
\begin{align}
 \frac12Y_\beta'(t)+\int_\Omega\theta^\beta\theta_t^2\dd x=-\frac12\int_\Omega
 \frac{\theta^{2\beta}\theta_x^2u_x}{v^2}\dd x
 -\int_\Omega\frac{\theta^{\beta+1}u_x\theta_t}{v}\dd x+\int_\Omega\frac{\theta^\beta
 (\mu u_x^2+|\mathbf{w}_x|^2+|\mathbf{b}_x|^2)\theta_t}{v}\dd x.
                                                               \label{eq:flux-t}
\end{align}
The boundary term from the heat flux vanishes by
\eqref{eq:bc-C}--\eqref{eq:bc-D}; in both half-line problems,
$\theta_x(0,t)=0$. Young's inequality, \eqref{eq:state}, and
$\norm{u_x(t)}_{L^\infty}\leq1+\norm{u_x(t)}_{L^\infty}^2$ yield
\begin{align}
 Y_\beta'(t)&+c\int_\Omega\theta^\beta\theta_t^2\dd x
 \leq C\norm{u_x(t)}_{L^\infty}^2Y_\beta(t)+CY_\beta(t)+C\int_\Omega\biggl\{
 \theta^{\beta+2}u_x^2
 +\theta^\beta(\mu u_x^2+|\mathbf{w}_x|^2+|\mathbf{b}_x|^2)^2\biggr\}\dd x.
                                                               \label{eq:Y}
\end{align}

For every fixed $\beta\geq0$, \eqref{eq:hot-Lp} gives
\[
 \int_\Omega\theta^{\beta+2}u_x^2\dd x
 \leq C\norm{u_x(t)}_{L^2}^2+C\norm{u_x(t)}_{L^\infty}^2.
\]
Integration in time, followed by \eqref{eq:prehi} and
\eqref{eq:gmax}, gives
\begin{equation}
 \int_0^T\!\int_\Omega
 \theta^{\beta+2}u_x^2\dd x\dd t\leq C.
                                                               \label{eq:t-ux}
\end{equation}

Split the remaining source term over $\{\theta\leq2\}$ and
$\{\theta>2\}$. Since $\theta^\beta\leq C$ on the first set,    \eqref{eq:trflux}, 
   \eqref{eq:long}, and  \eqref{eq:hot-Lp},    yield
\begin{align}
 &\int_\Omega\theta^\beta
 (\mu u_x^2+|\mathbf{w}_x|^2+|\mathbf{b}_x|^2)^2\,\dd x\notag\\
 &\quad\leq C\{\norm{u_x(t)}_{L^\infty}^2+\norm{\mathbf{w}_x(t)}_{L^\infty}^2
 +\norm{\mathbf{b}_x(t)}_{L^\infty}^2\}
 \int_\Omega(\mu u_x^2+|\mathbf{w}_x|^2+|\mathbf{b}_x|^2)\,\dd x\notag\\
 &\qquad+C\{\norm{u_x(t)}_{L^\infty}^2+\norm{\mathbf{w}_x(t)}_{L^\infty}^2
 +\norm{\mathbf{b}_x(t)}_{L^\infty}^2\}^2
 \int_{\{\theta>2\}}\theta^\beta\,\dd x\notag\\
 &\quad\leq C\{\norm{u_x(t)}_{L^\infty}^2+\norm{\mathbf{w}_x(t)}_{L^\infty}^2
 +\norm{\mathbf{b}_x(t)}_{L^\infty}^2\}+C\{\norm{u_x(t)}_{L^\infty}^2+\norm{\mathbf{w}_x(t)}_{L^\infty}^2
 +\norm{\mathbf{b}_x(t)}_{L^\infty}^2\}^2.
                                                        \label{eq:mechwt}
\end{align}
The right-hand side is integrable in time by \eqref{eq:gmax}.

To apply Gronwall's inequality, we also need $Y_\beta\in L^1(0,T)$.
On $\{\theta\leq4\}$,
$\theta^{2\beta}\leq C_\beta\theta^{-1}$, while on
$\{\theta>4\}$ we use \eqref{eq:htgrad}.
Thus \eqref{eq:state}, \eqref{eq:prehi}, and \eqref{eq:htgrad} give
\begin{align*}
 \int_0^T Y_\beta(t)\dd t
 &\leq C_\beta\int_0^T\!\int_{\{\theta\leq4\}}
 \frac{\theta_x^2}{v\theta}\dd x\dd t
 +C\int_0^T\!\int_{\{\theta>4\}}\theta^{2\beta}\theta_x^2\dd x\dd t
 \leq C.
\end{align*}
Hence
\begin{equation}
 \int_0^T Y_\beta(t)\dd t\leq C.              \label{eq:Y-L1}
\end{equation}

The initial assumptions, \eqref{eq:init}, and the one-dimensional
embedding give $Y_\beta(0)\leq C$. For $0\leq t\leq \min\{1,T\}$, it follows from
\eqref{eq:gmax}, \eqref{eq:Y}-\eqref{eq:mechwt}, and Gronwall's inequality that
\[
Y_\beta(t)\leq C.
\]
Next, for $1\leq t\leq T$, it follows from
\eqref{eq:gmax} and \eqref{eq:Y-L1} that
\[
\int_{t-1}^{t}
\left(Y_\beta(s)+1+\norm{u_x(s)}_{L^\infty}^{2}\right)
\dd s\leq C.
\]
Moreover, \eqref{eq:gmax}, \eqref{eq:t-ux},
and \eqref{eq:mechwt} give
\[
\int_{t-1}^{t}\!\int_\Omega
\left\{
\theta^{\beta+2}u_x^2
+\theta^\beta
\left(\mu u_x^2+|w_x|^2+|b_x|^2\right)^2
\right\}\dd x\dd s
\leq C.
\]
Applying the uniform Gronwall inequality to \eqref{eq:Y}
on $[t-1,t]$, we therefore obtain
\[
Y_\beta(t)\leq C,\qquad 1\leq t\leq T,
\]
which together with the estimate for
$0\leq t\leq\min\{1,T\}$ gives
\[
\sup_{0\leq t\leq T}Y_\beta(t)\leq C.
\]
Integrating \eqref{eq:Y} over $(0,T)$, we obtain after
using the above bound, \eqref{eq:gmax} and 
\eqref{eq:t-ux}-\eqref{eq:Y-L1} that
\begin{equation}\label{eq:t-wt}
	\sup_{0\leq t\leq T}Y_\beta(t)
	+\int_0^T\!\int_\Omega
	\theta^\beta\theta_t^2\dd x\dd t\leq C,
\end{equation}
where $C$ is independent of $T$.

\noindent\textit{Pointwise and second-order bounds.}\quad
For every fixed $\beta\geq0$, the fundamental theorem of calculus on
the components of $\{\theta>2\}$ and the Cauchy--Schwarz inequality
give
\begin{align*}
 \norm{\bigl(\theta(t)^{\beta+1}-2^{\beta+1}\bigr)_+}_{L^\infty}
 &\leq(\beta+1)
 \left(\int_{\{\theta>2\}}
 \frac{\theta^{2\beta}\theta_x^2}{v}\dd x\right)^{\frac12}
 \left(\int_{\{\theta>2\}}v\dd x\right)^{\frac12}\leq C.
\end{align*}
Here we used \eqref{eq:t-wt}, \eqref{eq:state}, and \eqref{eq:hot}.
This proves \eqref{eq:t-up}.
Combining \eqref{eq:t-wt} with \eqref{eq:state}, we obtain
\begin{equation}\label{eq:theta-x-sup}
 \sup_{0\leq t\leq T}\int_\Omega\theta_x^2\dd x\leq C.
\end{equation}
In addition,
\begin{equation}\label{eq:t-time}
 \int_0^T\!\int_\Omega\theta_t^2\dd x\dd t\leq C.
\end{equation}

Finally, solving \eqref{eq:temp} for $\theta_{xx}$ gives
\begin{align}
 \theta_{xx}={}&v\theta^{-\beta}\theta_t
 +\theta^{1-\beta}u_x
 -\theta^{-\beta}(\mu u_x^2+|\mathbf{w}_x|^2+|\mathbf{b}_x|^2)-\beta\theta^{-1}\theta_x^2
 +\frac{v_x}{v}\theta_x.                            \label{eq:t-xx}
\end{align}
By \eqref{eq:state}, \eqref{eq:t-up}, and \eqref{eq:t-xx},
\begin{align}
 \norm{\theta_{xx}(t)}_{L^2}^2
 &\leq C\int_\Omega\bigl\{\theta_t^2+u_x^2
 +(\mu u_x^2+|\mathbf{w}_x|^2+|\mathbf{b}_x|^2)^2+\theta_x^4+v_x^2\theta_x^2\bigr\}\dd x.
                                                        \label{eq:txxbd}
\end{align}
The one-dimensional interpolation inequality and Young's inequality
give, for every $\varepsilon>0$,
\begin{align}
 \int_0^T\!\int_\Omega(\theta_x^4+v_x^2\theta_x^2)\dd x\dd t&\leq\varepsilon\int_0^T\norm{\theta_{xx}(t)}_{L^2}^2\dd t
 +C_\varepsilon\int_0^T\bigl\{\norm{\theta_x(t)}_{L^2}^6
 +\norm{v_x(t)}_{L^2}^4\norm{\theta_x(t)}_{L^2}^2\bigr\}\dd t\notag\\
 &\quad\leq\varepsilon\int_0^T\norm{\theta_{xx}(t)}_{L^2}^2\dd t+C_\varepsilon,
                                                        \label{eq:txxrem}
\end{align}
where the final estimate in \eqref{eq:txxrem} follows from \eqref{eq:long},
\eqref{eq:theta-x-sup}, and \cref{l:vx}. Combining   \eqref{eq:long},  \eqref{eq:gmax},  \eqref{eq:mechwt},   \eqref{eq:theta-x-sup},  \eqref{eq:t-time},   \eqref{eq:txxbd},   \eqref{eq:txxrem}, 
and choosing $\varepsilon$ so that the $\theta_{xx}$ term in
\eqref{eq:txxrem} is absorbed into the left-hand side, we obtain
\eqref{eq:t-H2}.
\end{proof}

\section{Large-time behavior and proof of the main theorem}\label{sec:full}
We first collect the estimates required by the continuation criterion and
then prove convergence to the constant equilibrium.
\begin{proposition}\label{p:full}
Let $0\leq\alpha\leq\alpha_0$ and $T<T_*$. Then
\begin{align}
 \sup_{0\leq t\leq T}\bigl\{
 \norm{v(t)-1}_{H^1(\Omega)}^2+\norm{u(t)}_{H^1(\Omega)}^2
 +\norm{\theta(t)-1}_{H^1(\Omega)}^2+\norm{\mathbf{w}(t)}_{H^1(\Omega)}^2+\norm{\mathbf{b}(t)}_{H^1(\Omega)}^2\bigr\}\leq C,
                                                               \label{eq:fullH1}
\end{align}
and
\begin{align}
 \int_0^T\!\int_\Omega\bigl\{&
 v_x^2+\mu u_x^2+|\mathbf{w}_x|^2+|\mathbf{b}_x|^2+\theta_x^2\notag\\
 &+u_{xx}^2+|\mathbf{w}_{xx}|^2+|\mathbf{b}_{xx}|^2+\theta_{xx}^2\notag\\
 &+u_t^2+|\mathbf{w}_t|^2+|\mathbf{b}_t|^2+\theta_t^2\bigr\}
 \dd x\dd t\leq C.                                    \label{eq:fullds}
\end{align}
Here $C$ depends only on the fixed data and $\beta$, and is independent of
$T$ and $\alpha\in[0,\alpha_0]$.
\end{proposition}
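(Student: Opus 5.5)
This proposition is mainly a collection of the uniform bounds already established in Section~\ref{sec:high}, together with one extra step: recovering the second-order spatial derivatives of the transverse fields. The plan is to go through the norms listed in \eqref{eq:fullH1} and \eqref{eq:fullds}, identify the earlier estimate that controls each one, and verify that every constant depends only on the data and $\beta$. In particular, the constants must not depend on $T$ or on $\alpha\in[0,\alpha_0]$. The bootstrap closure fixes $\alpha_0$, $\delta_*$ and $M_*$ independently of $T$. Hence \eqref{eq:state} holds on $[0,T]$ for every $T<T_*$ with uniform constants, and all propositions of Section~\ref{sec:high} apply.

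For \eqref{eq:fullH1}, the $L^2$ parts come from the entropy identity \eqref{eq:ent-id}. Because of \eqref{eq:state} and \eqref{eq:t-up}, both $v$ and $\theta$ lie in a fixed compact subset of $(0,\infty)$. By \eqref{eq:Phi-square}, this gives $(v-1)^2+(\theta-1)^2\leq C\{\Phi(v)+\Phi(\theta)\}$, so $\norm{v-1}_{L^2}^2+\norm{\theta-1}_{L^2}^2\leq CE_0$. The bounds on $u$, $\mathbf{w}$ and $\mathbf{b}$ in $L^2$ follow directly from \eqref{eq:ent-id} or from \eqref{eq:cap}. The gradient parts come from the following results:
\begin{itemize}
\item $v_x$ from Lemma~\ref{l:vx};
\item $u_x$ from \eqref{eq:long};
\item $\mathbf{w}_x$ and $\mathbf{b}_x$ from \eqref{eq:trflux};
\item $\theta_x$ from \eqref{eq:t-H2}.
\end{itemize}

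For \eqref{eq:fullds}, most terms are again already available:
\begin{itemize}
\item $\int_0^T\!\int_\Omega v_x^2$ comes from \eqref{eq:vx};
\item the first-order dissipation terms come from \eqref{eq:cap} and \eqref{eq:long};
\item $u_{xx}$ and $u_t$ come from \eqref{eq:long};
\item $\theta_{xx}$ and $\theta_t$ come from \eqref{eq:t-H2};
\item $\mathbf{w}_t$ and $\mathbf{b}_t$ come from \eqref{eq:trflux}.
\end{itemize}

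The only terms not stated explicitly earlier are $\mathbf{w}_{xx}$ and $\mathbf{b}_{xx}$. To obtain them, expand
\[
\left(\frac{\mathbf{w}_x}{v}\right)_x=\frac{\mathbf{w}_{xx}}{v}-\frac{v_x\mathbf{w}_x}{v^2},
\]
which gives the pointwise bound $|\mathbf{w}_{xx}|\leq C|(\mathbf{w}_x/v)_x|+C|v_x||\mathbf{w}_x|$. The first term is square-integrable in space-time by \eqref{eq:trflux}. For the second term, estimate
\[
\int_0^T\!\int_\Omega v_x^2|\mathbf{w}_x|^2\dd x\dd t\leq\sup_{t}\norm{v_x}_{L^2}^2\int_0^T\norm{\mathbf{w}_x}_{L^\infty}^2\dd t\leq C,
\]
using \eqref{eq:vx} and \eqref{eq:tr-max}. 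The same argument applies to $\mathbf{b}$. Because of this product term, Lemma~\ref{l:vx} has to be available before the second-order transverse bounds can be closed.

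I expect the main obstacle to be bookkeeping rather than analysis. The task is to confirm that every constant used traces back only to $E_0$, the initial $H^1$ norm, $\inf v_0$, $\inf\theta_0$ and $\beta$. This covers the initial values $Y(0)$ and $Y_\beta(0)$, the quantity $\int u_x^2(0)$, and the bound $|\mu'|\leq C$. None of these may depend on $\alpha$ except through the uniform bounds \eqref{eq:mu-basic} and \eqref{eq:state}. One point to check is $\int|q_0|^2$ in Lemma~\ref{l:vx}, where $q_0=\mu(v_0)v_{0x}/v_0$. It is bounded uniformly because $\mu(v_0)\leq2$ on the bootstrap range.
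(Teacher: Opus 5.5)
Your proposal is correct and follows essentially the same route as the paper. The $H^1$ bound and most of the dissipation bound are collected from \eqref{eq:ent-id}, \eqref{eq:state}, \eqref{eq:trflux}, Lemma~\ref{l:vx} and Propositions~\ref{p:cap}--\ref{p:thig}, and the one new step is the same as the paper's: you bound $\mathbf{w}_{xx},\mathbf{b}_{xx}$ via $\mathbf{w}_{xx}=v(\mathbf{w}_x/v)_x+v_x\mathbf{w}_x/v$, with the product term controlled by $\sup_t\norm{v_x}_{L^2}^2\int_0^T\norm{\mathbf{w}_x}_{L^\infty}^2\dd t\leq C$ from \eqref{eq:vx} and \eqref{eq:tr-max}.
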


\begin{proof}
Using \eqref{eq:ent-id},  \eqref{eq:state},    \eqref{eq:trflux},  lemma \ref{l:vx},  and propositions \ref{p:long}-\ref{p:thig},   we obtain
\eqref{eq:fullH1}.

It remains to estimate the second spatial derivatives of $\mathbf{w}$ and $\mathbf{b}$.
Since
\begin{equation}\label{eq:wxx-flux}
 \mathbf{w}_{xx}=v\left(\frac{\mathbf{w}_x}{v}\right)_x
 +v_x\frac{\mathbf{w}_x}{v}.
\end{equation}
The corresponding identity for $\mathbf{b}$ is
\begin{equation}\label{eq:bxx-flux}
 \mathbf{b}_{xx}=v\left(\frac{\mathbf{b}_x}{v}\right)_x
 +v_x\frac{\mathbf{b}_x}{v}.
\end{equation} By \eqref{eq:state}, \eqref{eq:trflux}, \eqref{eq:tr-max}, and  \cref{l:vx} 
\begin{equation}\label{eq:transverse-H2}
\begin{aligned}
 \int_0^T\!\int_\Omega(|\mathbf{w}_{xx}|^2+|\mathbf{b}_{xx}|^2)\dd x\dd t&\leq C\int_0^T\!\int_\Omega
 \left\{\left|\left(\frac{\mathbf{w}_x}{v}\right)_x\right|^2
 +\left|\left(\frac{\mathbf{b}_x}{v}\right)_x\right|^2\right\}\dd x\dd t\\&+C\sup_{0\leq t\leq T}\norm{v_x(t)}_{L^2}^2
 \int_0^T\{\norm{\mathbf{w}_x(t)}_{L^\infty}^2
 +\norm{\mathbf{b}_x(t)}_{L^\infty}^2\}\dd t\leq C.
\end{aligned}
\end{equation}
Combining \eqref{eq:transverse-H2} with
\cref{l:vx,p:cap,p:long,%
p:thig} and \eqref{eq:trflux} gives
\eqref{eq:fullds}.
\end{proof}

The estimates in Proposition~\ref{p:full}, the bounds
\eqref{eq:state}, and Proposition~\ref{p:thig} are uniform for
$T<T_*$. Suppose that $T_*<\infty$. Then
$v_t=u_x\in L^2(0,T_*;H^1)$ implies that $v-1$ has a limit in $H^1$
as $t\uparrow T_*$. For each diffusive variable, the bounds in
$L^2(0,T_*;H^2)\cap H^1(0,T_*;L^2)$ imply continuity into $H^1$ up to
$T_*$, with the prescribed boundary traces.
Indeed, on every finite interval,
\begin{equation}\label{eq:time-continuity}
 L^2(0,T;H^2)\cap H^1(0,T;L^2)
 \hookrightarrow C([0,T];H^1).
\end{equation}
The uniform positive state bounds pass to these terminal data.
Lemma~\ref{lem:01a} then restarts the solution beyond $T_*$,
a contradiction. Therefore $T_*=\infty$. Letting $T\to\infty$
proves \eqref{eq:mainvt}--\eqref{eq:mainds}.

\begin{proposition}\label{p:dec}
For each fixed $\alpha\in[0,\alpha_0]$, as $t\to\infty$,
\begin{equation}
 \norm{v_x(t)}_{L^2}+\norm{u_x(t)}_{L^2}+\norm{\theta_x(t)}_{L^2}
 +\norm{\mathbf{w}_x(t)}_{L^2}+\norm{\mathbf{b}_x(t)}_{L^2}\longrightarrow0.    \label{eq:grdec}
\end{equation}
Moreover, for every $2<p\leq\infty$,
\begin{align}
 \norm{v(t)-1}_{L^p(\Omega)}+\norm{u(t)}_{L^p(\Omega)}
 +\norm{\theta(t)-1}_{L^p(\Omega)}+\norm{\mathbf{w}(t)}_{L^p(\Omega)}+\norm{\mathbf{b}(t)}_{L^p(\Omega)}
 \longrightarrow 0.                                  \label{eq:Lp-dec}
\end{align}
 
\end{proposition}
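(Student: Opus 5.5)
The plan is the standard argument: a function in $W^{1,1}(0,\infty)$ that is nonnegative must tend to zero. I would take, for each gradient quantity, $f(t)=\norm{g_x(t)}_{L^2}^2$ and show that both $f$ and $f'$ (or at least $|f'|$) belong to $L^1(0,\infty)$, with bounds taken from Proposition~\ref{p:full}.

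\textbf{The $H^1$-type variables.} For $g\in\{u,\theta,\mathbf{w},\mathbf{b}\}$:
\begin{itemize}
\item \eqref{eq:fullds} gives $\int_0^\infty\norm{g_x}_{L^2}^2\dd t\leq C$.
\item Moreover
\[
\Bigl|\frac{\dd}{\dd t}\norm{g_x}_{L^2}^2\Bigr|=2\Bigl|\int_\Omega g_xg_{xt}\dd x\Bigr|=2\Bigl|\int_\Omega g_{xx}g_t\dd x\Bigr|\leq\norm{g_{xx}}_{L^2}^2+\norm{g_t}_{L^2}^2 .
\]
The integration by parts has no boundary term, because at $x=0$ either $g_t=0$ (Dirichlet data) or $g_x=0$ (Neumann data).
\item The right-hand side is integrable in time by \eqref{eq:fullds}.
\end{itemize}
Hence $\norm{g_x(t)}_{L^2}\to0$. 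These identities are first justified at the regularized level, as in the Appendix.

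\textbf{The specific volume.} For $v$ I would not differentiate $v_x$ directly, since $v_{xt}=u_{xx}$. Then
\[
\frac{\dd}{\dd t}\norm{v_x}_{L^2}^2=2\int_\Omega v_xu_{xx}\dd x,
\]
which is bounded by $\norm{v_x}_{L^2}^2+\norm{u_{xx}}_{L^2}^2\in L^1(0,\infty)$. Together with $\int_0^\infty\norm{v_x}_{L^2}^2\dd t\leq C$ from \eqref{eq:mainds}, this proves \eqref{eq:grdec}.

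\textbf{The $L^p$ decay.} I would use \eqref{eq:GN}: for $2<p\leq\infty$,
\[
\norm{f}_{L^p}\leq C\norm{f}_{L^2}^{\frac12+\frac1p}\norm{f_x}_{L^2}^{\frac12-\frac1p},
\]
where $\norm{f}_{L^2}$ is uniformly bounded and $\norm{f_x}_{L^2}\to0$. The uniform $L^2$ bounds come from:
\begin{itemize}
\item \eqref{eq:ent-id} for $u$ and $\mathbf{w}$;
\item \eqref{eq:cap} for $\mathbf{b}$;
\item \eqref{eq:Phi-square} combined with \eqref{eq:state} and \eqref{eq:t-up} for $v-1$ and $\theta-1$, since on a bounded range of $\xi$ we have $|\xi-1|^2\leq C\Phi(\xi)$.
\end{itemize}
Since $\frac12-\frac1p>0$ for $p>2$, decay follows. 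The limitation to $p>2$ is exactly where this interpolation breaks, since $L^2$ norms need not decay.

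\textbf{Main obstacle.} The only delicate point is the time-derivative computation for $\theta_x$ and $\mathbf{b}_x$ on the half-line, including the Neumann case for $\mathbf{b}$. There one must check that $\int_\Omega g_{xt}g_x\dd x=-\int_\Omega g_tg_{xx}\dd x$ carries no boundary contribution; this holds because $g_x(0,t)=0$ there.

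All bounds used are uniform in $t$, so this closes the argument for each fixed $\alpha\in[0,\alpha_0]$.
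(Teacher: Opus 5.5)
Your proposal is correct and follows the paper's proof essentially step for step. Both show that each squared gradient norm lies in $L^1(0,\infty)\cap W^{1,1}(0,\infty)$, using $\frac{\dd}{\dd t}\int_\Omega v_x^2\dd x=2\int_\Omega v_xu_{xx}\dd x$ and $\frac{\dd}{\dd t}\int_\Omega f_x^2\dd x=-2\int_\Omega f_{xx}f_t\dd x$ together with \eqref{eq:fullds}, and then obtain the $L^p$ decay from \eqref{eq:GN}. The only cosmetic difference is that you assemble the uniform $L^2$ bounds from \eqref{eq:ent-id}, \eqref{eq:cap} and \eqref{eq:Phi-square}, whereas the paper simply cites \eqref{eq:fullH1}.
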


\begin{proof}
By \eqref{eq:fullds}, the squares of all five $L^2$ norms
in \eqref{eq:grdec} belong to $L^1(0,\infty)$.
We compute their time derivatives directly; the same calculation is used
in \cite{LiLiang2016}. For $v$, the identity $v_{xt}=u_{xx}$ gives
\begin{equation}\label{eq:dec-vx}
 \frac{\dd}{\dd t}\int_\Omega v_x^2\dd x
 =2\int_\Omega v_xu_{xx}\dd x.
\end{equation}
The right-hand side belongs to $L^1(0,\infty)$ by the
Cauchy--Schwarz inequality and \eqref{eq:fullds}.

Let $f$ be any scalar component of $u$, $\mathbf{w}$, $\mathbf{b}$, or $\theta-1$.
Integration by parts and the corresponding Dirichlet or Neumann
boundary condition give, for almost every $t$,
\begin{equation}\label{eq:dec-fx}
 \frac{\dd}{\dd t}\int_\Omega f_x^2\dd x
 =-2\int_\Omega f_{xx}f_t\dd x.
\end{equation}
Again, the right-hand side belongs to $L^1(0,\infty)$ by
\eqref{eq:fullds}.
Hence the squares of all five derivative norms belong to
$L^1(0,\infty)\cap W^{1,1}(0,\infty)$.
Each has a limit at infinity, which must be zero by integrability. This
proves \eqref{eq:grdec}.

Let $g$ be any scalar component of $v-1,u,\theta-1,\mathbf{w},\mathbf{b}$.
By \eqref{eq:fullH1}, $\norm{g(t)}_{L^2}$ is uniformly bounded,
while \eqref{eq:grdec} gives
$\norm{g_x(t)}_{L^2}\to0$.
The one-dimensional Gagliardo--Nirenberg inequality
\eqref{eq:GN} yields
\[
 \norm{g(t)}_{L^p(\Omega)}
 \leq C_p\norm{g(t)}_{L^2}^{\frac12+\frac1p}
 \norm{g_x(t)}_{L^2}^{\frac12-\frac1p}\longrightarrow0
 \qquad(2<p\leq\infty).
\]
Summing over all scalar components proves \eqref{eq:Lp-dec}.

\end{proof}

\begin{proof}[Proof of Theorem~\ref{thm:main}]
The choice \eqref{eq:alpha-choice} and the bootstrap argument for $v$
establish \eqref{eq:state}. The estimates in Section~\ref{sec:high}
yield Proposition~\ref{p:full} and the upper bound for $\theta$.
The continuation argument based on Proposition~\ref{p:full} and
Lemma~\ref{lem:01a} gives global existence and uniqueness.
Proposition~\ref{p:dec} gives
\eqref{eq:maingr} for each fixed admissible $\alpha$. The constants in
\eqref{eq:mainvt}--\eqref{eq:mainds} are uniform for
$\alpha\in[0,\alpha_0]$: the primitive comparison and the absorption in
\eqref{eq:new-error} use fixed constants, while $\mu$ and its required state derivatives are
uniformly bounded on the state interval \eqref{eq:state}.
\end{proof}

Smallness of $\alpha$ is used only in the primitive comparison and in the
absorption of \eqref{eq:new-error}. No decay rate uniform in $\alpha$ is
asserted.

\appendix
\section{Justification of the energy estimates}\label{app:energy}
The energy identities used in Propositions~\ref{p:cap}--\ref{p:dec} are
justified by approximation in the class \eqref{eq:finite}. Smooth data
with homogeneous Dirichlet values
or with constant values near a Neumann endpoint are dense in the
corresponding $H^1$ space; truncation, a shrinking endpoint collar,
and mollification preserve the positive state bounds. The corresponding
difference estimates give convergence of the approximating solutions in
$C_tH^1$ and in $L^2_tH^2$ for the diffusive variables, and the
equations give convergence of their time derivatives in $L^2$.
For a component $f$ of $u,\mathbf{w},\mathbf{b},\theta-1$, time convolution preserving
the homogeneous boundary condition consequently gives
\begin{equation}\label{eq:energy-fx}
  \frac{\dd}{\dd t}\int_\Omega\frac{|f_x|^2}{v}\dd x
 =-2\int_\Omega\left(\frac{f_x}{v}\right)_xf_t\dd x
   -\int_\Omega\frac{u_x|f_x|^2}{v^2}\dd x.
\end{equation}
Indeed, $\left(\frac{f_x}{v}\right)_x\in L^2_{t,x}$ follows from the
finite-time regularity in \eqref{eq:finite}, while
$u_x\in L^2_tL^\infty_x$.
Thus formula \eqref{eq:energy-fx} does not require a boundary trace of
$f_t$.
For the unweighted norm, integration by parts at the regularized level
gives \eqref{eq:dec-fx} directly, and the convergence stated in the first
paragraph permits passage to the limit. For the heat time-derivative test
leading to \eqref{eq:flux-t}, apply time convolution to
$\frac{\theta^{\beta+1}-1}{\beta+1}$. The regularity in
\eqref{eq:finite} and the finite-time positive state bounds justify passage
to the limit.
Lipschitz approximations of the positive part justify the temperature
truncations, and spatial cutoffs remove the boundary at infinity.
The half-line boundary fluxes vanish by $u=\mathbf{w}=0$, $\theta_x=0$,
and either $\mathbf{b}=0$ or $\mathbf{b}_x=0$. Identity \eqref{eq:lj09}
is justified by the same approximation used for \eqref{eq:energy-fx}: on each finite positive
state interval,
$q_t=(\mu u_x/v)_x\in L^2_{t,x}$ and $u\in C_tH^1_x$.
The stress in \eqref{eq:sigma} satisfies
$\sigma+1\in L^2(0,T;H^1(\Omega))$, so the  identities in
Lemma~\ref{l:Kaz} are well defined. For the elliptic equation,
$v^{-1}\in W^{1,2}_tL^\infty_x$ on finite intervals; the detailed
differentiation argument is included in Lemma~\ref{l:res}.


\begin{thebibliography}{99}

\bibitem[Cabannes(1970)]{Cabannes1970}
H.~Cabannes.
\newblock \emph{Theoretical Magnetofluiddynamics}.
\newblock Academic Press, New York, (1970).

\bibitem[Cao et~al.(2021)Cao, Peng, and Sun]{CaoPengSun2021}
Y.~Cao, Y.~Peng, and Y.~Sun.
\newblock Global existence of strong solutions to {MHD} with density-depending
viscosity and temperature-depending heat-conductivity in unbounded domains.
\newblock \emph{J. Math. Phys.}, 62(1):011508, (2021).
\newblock \doi{10.1063/5.0039238}.

\bibitem[Chen and Wang(2002)]{ChenWang2002}
G.-Q.~Chen and D.~Wang.
\newblock Global solutions of nonlinear magnetohydrodynamics with large initial data.
\newblock \emph{J. Differ. Equ.}, 182(2):344--376, (2002).
\newblock \doi{10.1006/jdeq.2001.4111}.

\bibitem[Dong and Guo(2025)]{DongGuo2025}
W.~Dong and Z.~Guo.
\newblock Cauchy problem for the {Navier--Stokes} equations with
temperature-dependent transport coefficients and large data.
\newblock \emph{Math. Models Methods Appl. Sci.}, 35(9):1889--1932, (2025).
\newblock \doi{10.1142/S0218202525500319}.

\bibitem[Hu and Ju(2015)]{HuJu2015}
Y.~Hu and Q.~Ju.
\newblock Global large solutions of magnetohydrodynamics with
temperature-dependent heat conductivity.
\newblock \emph{Z. Angew. Math. Phys.}, 66(3):865--889, (2015).
\newblock \doi{10.1007/s00033-014-0446-1}.

\bibitem[Huang et~al.(2019)Huang, Shi, and Sun]{HuangShiSun2019}
B.~Huang, X.~Shi, and Y.~Sun.
\newblock Global strong solutions to magnetohydrodynamics with density-dependent
viscosity and degenerate heat-conductivity.
\newblock \emph{Nonlinearity}, 32(11):4395--4412, (2019).
\newblock \doi{10.1088/1361-6544/ab3059}.

\bibitem[Huang et~al.(2021)Huang, Shi, and Sun]{HuangShiSun2021}
B.~Huang, X.~Shi, and Y.~Sun.
\newblock Large-time behavior of magnetohydrodynamics with
temperature-dependent heat-conductivity.
\newblock \emph{J. Math. Fluid Mech.}, 23:67, (2021).
\newblock \doi{10.1007/s00021-021-00594-y}.

\bibitem[Jeffrey and Taniuti(1964)]{JeffreyTaniuti1964}
A.~Jeffrey and T.~Taniuti.
\newblock \emph{Non-Linear Wave Propagation: With Applications to Physics and
Magnetohydrodynamics}.
\newblock Academic Press, New York, (1964).

\bibitem[Jiang(1999)]{Jiang1999}
S.~Jiang.
\newblock Large-time behavior of solutions to the equations of a one-dimensional
viscous polytropic ideal gas in unbounded domains.
\newblock \emph{Commun. Math. Phys.}, 200(1):181--193, (1999).
\newblock \doi{10.1007/s002200050526}.

\bibitem[Jiang(2002)]{Jiang2002}
S.~Jiang.
\newblock Remarks on the asymptotic behaviour of solutions to the compressible
{Navier--Stokes} equations in the half-line.
\newblock \emph{Proc. R. Soc. Edinb. Sect. A}, 132:627--638, (2002).
\newblock \doi{10.1017/S0308210500001815}.

\bibitem[Kazhikhov(1987)]{Kazhikhov1987}
A.~V.~Kazhikhov.
\newblock A priori estimates for the solutions of equations of magnetic gas dynamics.
\newblock In \emph{Boundary Value Problems for Equations of Mathematical Physics},
Krasnoyarsk, (1987).
\newblock In Russian.

\bibitem[Landau and Lifshitz(1984)]{LandauLifshitz1984}
L.~D.~Landau and E.~M.~Lifshitz.
\newblock \emph{Electrodynamics of Continuous Media}.
\newblock 2nd edn., Pergamon, New York, (1984).

\bibitem[Li et~al.(2026)Li, Liao, and Wan]{LiLiaoWan2026}
F.~Li, Y.~Liao, and L.~Wan.
\newblock Large-time behavior of global solutions to the Cauchy problem of the
planar compressible magnetohydrodynamic equations.
\newblock \emph{J. Math. Fluid Mech.}, 28:52, (2026).
\newblock \doi{10.1007/s00021-026-01037-2}.

\bibitem[Li and Liang(2016)]{LiLiang2016}
J.~Li and Z.~Liang.
\newblock Some uniform estimates and large-time behavior of solutions to
one-dimensional compressible {Navier--Stokes} system in unbounded domains with
large data.
\newblock \emph{Arch. Ration. Mech. Anal.}, 220(3):1195--1208, (2016).
\newblock \doi{10.1007/s00205-015-0952-0}.

\bibitem[Li and Liu(2026)]{LiLiu2026}
J.~Li and Z.~Liu.
\newblock Large-time behavior for one-dimensional planar compressible
magnetohydrodynamic equations on unbounded domains.
\newblock arXiv:2609.20126v2, (2026).
\newblock \url{https://arxiv.org/abs/2609.20126v2}.

\bibitem[Li and Xu(2026)]{LiXu2026}
K.~Li and X.~Xu.
\newblock Large-time behavior of solutions to compressible {Navier--Stokes}
system in unbounded domains with degenerate heat-conductivity and large data.
\newblock \emph{J. Differ. Equ.}, 465:114204, (2026).
\newblock \doi{10.1016/j.jde.2026.114204}.

\bibitem[Li and Shang(2019)]{LiShang2019}
W.~Li and Z.~Shang.
\newblock Global large solutions to the planar magnetohydrodynamics equations
with constant heat conductivity.
\newblock \emph{Bound. Value Probl.}, 2019:162, (2019).
\newblock \doi{10.1186/s13661-019-1275-0}.

\bibitem[L{\"u} et~al.(2021)L{\"u}, Shi, and Xiong]{LuShiXiong2021}
B.~L{\"u}, X.~Shi, and C.~Xiong.
\newblock Global existence of strong solutions to the planar compressible
magnetohydrodynamic equations with large initial data in unbounded domains.
\newblock \emph{Commun. Math. Sci.}, 19(6):1655--1671, (2021).
\newblock \doi{10.4310/CMS.2021.v19.n6.a9}.

\bibitem[Shang(2022)]{Shang2022}
Z.~Shang.
\newblock Global large solutions to the Cauchy problem of planar
magnetohydrodynamics equations with temperature-dependent coefficients.
\newblock \emph{J. Dyn. Control Syst.}, 28(1):163--205, (2022).
\newblock \doi{10.1007/s10883-020-09526-x}.

\bibitem[Song and Zhao(2025)]{SongZhao2025}
D.~Song and X.~Zhao.
\newblock Large time behavior of strong solution to the magnetohydrodynamics
system with temperature-dependent viscosity, heat-conductivity, and resistivity.
\newblock \emph{Electron. Res. Arch.}, 33(2):938--972, (2025).
\newblock \doi{10.3934/era.2025043}.

\bibitem[Tong et~al.(2026)Tong, Wang, and Zhang]{TongWangZhang2026}
M.~Tong, X.~Wang, and R.~Zhang.
\newblock Global strong solutions to the planar compressible
magnetohydrodynamic equations with degenerate heat-conductivity in the half-line.
\newblock \emph{Acta Math. Sci. Ser. B}, 46(1):189--208, (2026).
\newblock \doi{10.1007/s10473-026-0112-7}.

\bibitem[Wang(2003)]{Wang2003}
D.~Wang.
\newblock Large solutions to the initial-boundary value problem for planar
magnetohydrodynamics.
\newblock \emph{SIAM J. Appl. Math.}, 63(4):1424--1441, (2003).
\newblock \doi{10.1137/S0036139902409284}.

\bibitem[Wang and Zhang(2026)]{WangZhang2026}
H.~Wang and R.~Zhang.
\newblock Initial and initial boundary value problems for the
{Navier--Stokes} equations with temperature-dependent transport coefficients
and large data.
\newblock arXiv:2609.24435, (2026).
\newblock \url{https://arxiv.org/abs/2609.24435}.
 

 

\end{thebibliography}
\end{document}